\DeclareSymbolFont{AMSb}{U}{msb}{m}{n}
\documentclass[noamsfonts,10pt]{amsart}
\usepackage{amsrefs}
\usepackage[colorlinks, linkcolor=blue, citecolor=blue, urlcolor=blue]{hyperref}
\usepackage{amsthm}
\usepackage{mathtools}
\usepackage{array, longtable}
\usepackage[table]{xcolor}
\usepackage{caption}
\usepackage{subcaption}
\usepackage{fullpage}
\usepackage{enumitem}
\usepackage{ytableau}
\usepackage{tikz}
\usetikzlibrary{calc}
\usetikzlibrary{decorations.pathreplacing}
\usepackage[indent]{parskip}
\usepackage{cancel}
\usepackage[charter]{mathdesign}

\tikzstyle{dot}=[circle,fill=black, minimum size = 2.25pt, inner sep=0pt]

\tikzstyle{corner}=[circle,draw=black,fill=none, minimum size = 5pt, inner sep=0pt]

\newcommand{\gridbox}[2]{%
    \draw [lightgray,fill=lightgray] ($(#1,#2) + (-0.5,-0.5)$) rectangle ($(#1,#2) + (0.5,0.5)$);
}

\newcommand{\ijBox}[2]{%
    \draw [fill=lightgray!30] ($(#1,#2) + (-0.5,-0.5)$) rectangle ($(#1,#2) + (0.5,0.5)$);
}

\newcommand{\rshade}{*(lightgray)}

\newcommand{\exIntro}[1]{%
    \begin{tikzpicture}[scale=.2,baseline={([yshift=-2pt]current bounding box.center)}]
        \foreach \x/\y in {#1} {
        \gridbox{\x}{\y}
}
        \foreach \x in {1,...,3}{\foreach \y in {1,...,3}{\node [dot] at (\x,\y) {};}}
    \end{tikzpicture}
    }

\newcommand{\N}{\mathbb{N}}
\newcommand{\R}{\mathbb{R}}
\newcommand{\NN}{\mathcal{N}}
\newcommand{\M}{\mathcal{M}}
\renewcommand{\P}{\mathcal{P}}
\newcommand{\D}{\mathcal{D}}
\newcommand{\DZ}{\mathcal{D}^{\mathrm{Z} \!\!\!\! \backslash}}
\newcommand{\al}{\alpha}
\newcommand{\be}{\beta}
\DeclareMathOperator{\Fr}{Frame}
\DeclareMathOperator{\Pic}{Pict}
\renewcommand{\O}{\mathcal{O}}
\newcommand{\F}{\mathcal{F}}

\newcommand{\I}{\mathcal{I}}
\newcommand{\NE}[1]{\operatorname{NE}(#1)}
\newcommand{\SW}[1]{\operatorname{SW}(#1)}
\renewcommand{\k}{\Bbbk}
\renewcommand{\d}{\operatorname{d}}
\newcommand{\dd}{\operatorname{diag}}
\newcommand{\vv}{\operatorname{vert}}
\newcommand{\ff}{\operatorname{fact}}
\newcommand{\gz}{\textcolor{lightgray}{0}}
\DeclareMathOperator{\Vol}{Vol}
\DeclareMathOperator{\Ehr}{Ehr}
\DeclareMathOperator{\Hilb}{Hilb}
\DeclareMathOperator{\Kdim}{Kdim}
\DeclareMathOperator{\cone}{cone}
\DeclareMathOperator{\supp}{supp}
\newcommand{\res}{\operatorname{res}}
\DeclareMathOperator{\cor}{cor}
\DeclareMathOperator{\Par}{Par}
\DeclareMathOperator{\Conn}{Conn}
\newcommand{\sA}{\mathsf{A}}
\newcommand{\sB}{\mathsf{B}}
\newcommand{\sC}{\mathsf{C}}
\newcommand{\sD}{\mathsf{D}}
\newcommand{\1}{\mathbf{1}}
\newcommand{\mat}[1]{\left[ \begin{smallmatrix} #1 \end{smallmatrix} \right]}

\theoremstyle{plain}
\newtheorem{theorem}{Theorem}[section]
\newtheorem{lemma}[theorem]{Lemma}
\newtheorem{corollary}[theorem]{Corollary}
\newtheorem{proposition}[theorem]{Proposition}

\newtheorem{maintheorem}{Theorem}

\theoremstyle{definition}
\newtheorem{dfn}[theorem]{Definition}
\newtheorem{remark}[theorem]{Remark}
\newtheorem{example}[theorem]{Example}
\newtheorem{construction}[theorem]{Construction}
\numberwithin{equation}{section}

\title{A lattice path model for the volume of the Monge polytope}

\author{William Q.~Erickson}
\address[Erickson]{
University of Tennessee at Chattanooga \\
615 McCallie Avenue \\
Chattanooga, TN 37403} 
\email{william-erickson01@utc.edu}

\author{Nicholas B.~Jones}
\address[Jones]{University of North Texas\\
1155 Union Circle\\
Denton, TX 76203} 
\email{nicholas.jones@unt.edu}

\begin{document}

\begin{abstract}
    Monge matrices arise throughout combinatorial optimization and algorithm design;
    the \emph{Monge polytope} $\M_{pq}$ is the set of $p \times q$ Monge matrices lying inside the standard simplex on the set of matrix coordinates.
    We find a Stanley decomposition of the associated affine semigroup, and use it to obtain a volume formula for $\M_{pq}$ expressed as a sum over ``Z-avoiding'' Delannoy paths in a $p \times q$ grid.
    An efficient dynamic-programming implementation of this formula computes the volume in dimensions far beyond the reach of general-purpose exact-volume algorithms (e.g., the volume of $\M_{20,20}$, which has dimension 399, is computed in a fraction of a second).
    As a corollary of our Stanley decomposition, we also obtain a combinatorial closed form for the Ehrhart series of $\M_{pq}$.
\end{abstract}

\subjclass[2020]{Primary 52B12; Secondary 13F65; 05A15}






\keywords{Monge matrices, polytopes, volume, Delannoy paths, Stanley decompositions,  Ehrhart series}

\maketitle

\setcounter{tocdepth}{1}

\renewcommand{\baselinestretch}{0.5}\normalsize
\tableofcontents
\renewcommand{\baselinestretch}{1.0}\normalsize

\section{Introduction}

\subsection*{The Monge polytope}

A \emph{Monge matrix} is a real matrix $M$ satisfying the condition 
\begin{equation}
    \label{Monge condition intro}
    M_{ij} + M_{IJ} \leq M_{iJ} + M_{Ij} \text{ for all $i < I$ and $j < J$}.
\end{equation}
Monge matrices have their roots in Gaspard Monge's foundational 1781 work~\cite{Monge} in optimal transport.
Monge's motivation was the linear programming problem known today as the \emph{transportation problem} (having since been reformulated by Kantorovich, Hitchcock, and Koopman):
find the minimum-cost plan for transporting mass from $p$ supply sites to $q$ demand sites, where the pairwise supply--demand costs are given by a $p \times q$ cost matrix~$M$.
Hoffman~\cite{Hoffman} showed that the transportation problem can be solved by a simple greedy algorithm (the ``northwest corner rule''), for every choice of supply and demand vectors, if and only if the cost matrix $M$ is Monge.
Essentially, then, the Monge condition~\eqref{Monge condition intro} precisely describes the cost structures for which the transportation problem is trivial to solve.
Beyond their origin in transportation, Monge matrices (and the related Monge sequences~\cite{BN22}) play a key role across combinatorial optimization (see the excellent survey~\cite{Burkard}) and algorithm design~\cites{Aggarwal87,FR06,GP,KMW10}.

The existing literature treats the Monge condition~\eqref{Monge condition intro} largely as an algorithmic hypothesis, and asks relatively little about the geometry of the set of matrices satisfying it.
It is well known that the set of all nonnegative $p \times q$ Monge matrices forms a pointed cone in $\R^{p \times q}$, whose extremal rays were described by Rudolf and Woeginger~\cite{RW95}.
However, in the context of the transportation problem, the full cone is actually redundant, in the sense that the set of optimal solutions remains unchanged when one simply scales the cost matrix $M$.
This is because an optimal solution to the transportation problem is a matrix $T$ that minimizes the Hadamard product $\sum_{i,j} M_{ij} T_{ij} = \operatorname{tr}(M^t T)$, and this product is clearly linear in $T$.
In other words, in the cone of Monge matrices, each ray through $0$ should really be viewed as a single equivalence class, since all the cost matrices on that ray have the same set of optimal solutions.
Normalizing so that the entries of $M$ sum to $1$ selects a canonical representative from each equivalence class ---
geometrically, the hyperplane $\sum_{i,j} M_{ij} = 1$ meets each ray in exactly one point --- cutting out the bounded polytope we call the \emph{Monge polytope}
\[
    \M_{pq} \coloneqq \Big\{ \text{Monge matrices in $\R^{p \times q}$ with nonnegative entries summing to $1$} \Big\},
\]
sitting inside the standard simplex $\Delta_{pq-1}$ on the set of matrix coordinates.

Despite the ubiquity of Monge matrices, relatively little is known about the geometry of the Monge polytope.
(What we call the Monge polytope $\M_{pq}$ in this paper is quite different from the polytopes studied in~\cite{Friesecke} and~\cite{Vogler} and the references therein;
those authors consider polytopes whose points are solutions, rather than cost functions, for transport problems.
Such polytopes are generally known as \emph{transportation polytopes}.)
The most obvious goal is to find a closed formula for its normalized volume
\begin{equation}
    \label{volume in intro}
    \Vol(\M_{pq})
 \coloneqq \text{volume of $\M_{pq}$ relative to $\Delta_{pq-1}$},
 \end{equation}
 thus answering the question of how ``rare'' the Monge property~\eqref{Monge condition intro} is among $p \times q$ matrices in the standard simplex.
 The next natural goal --- especially for those readers familiar with nonnegative integer matrix enumeration problems in the vein of Stanley's book~\cite{StanleyCCA} --- is to solve the discrete version of the problem, namely finding a closed form for the Ehrhart series
\begin{equation}
    \label{Ehrart series intro}
    \Ehr(\M_{pq}; z) \coloneqq \sum_{t=0}^\infty \left| \Big\{ \text{Monge matrices in $\N^{p \times q}$ with entries summing to $t$} \Big\} \right| \, z^t.
\end{equation}
The main result of this paper is a combinatorial model that yields explicit formulas for~\eqref{volume in intro} and~\eqref{Ehrart series intro}, to be previewed below in Theorems~\ref{thm:volume in intro} and~\ref{thm:Ehrhart in intro}.
This fits within a broader tradition of finding explicit combinatorial models for polytope volumes, such as flow polytopes in~\cite{Benedetti19} or the Birkhoff polytope in~\cite{DLY09}.

\subsection*{Main result: volume formula}

Our volume formula is expressed in terms of \emph{Delannoy paths}, that is, paths $(1,1) \rightarrow (p,q)$ using steps to the south ($\downarrow$), east $(\rightarrow)$, or southeast $(\searrow)$ in matrix coordinates.
We say that a Delannoy path $D$ is \emph{Z-avoiding} if it does not contain a ``Z''-shaped subpath, that is, $\rightarrow \downarrow \cdots \downarrow\rightarrow$.
A \emph{vertical component} of $D$ is connected by a run of $\downarrow$'s that is not adjacent to a $\rightarrow$.
A path $D$ breaks the $p \times q$ grid into two connected components whose shapes are given by integer partitions $\al(D)$ and $\be(D)$;
in expressing the theorem below, we recall that for a partition $\al = (\al_1, \ldots, \al_\ell)$, it is standard to write $\al! \coloneqq \al_1! \cdots \al_\ell!$, and to write $\al'$ for the conjugate partition obtained by transposing the Young diagram of $\al$.

\begin{maintheorem}[see Theorem~\ref{thm:volume in body}]
    \label{thm:volume in intro}
    The normalized volume~\eqref{volume in intro} of the Monge polytope is given by
    \[
        \Vol(\M_{pq}) = \frac{1}{p^q q^{p-1}} \sum_{D \in \DZ_{pq}} q^{\dd(D)} \: \frac{\vv(D)}{\ff(D)},
    \]
    where
    \begin{itemize}
        \item[] $\DZ_{pq}$ is the set of Z-avoiding Delannoy paths from $(1,1)$ to $(p,q)$;
        \item[] $\dd(D)$ is the number of $\searrow$'s in $D$;
        \item[] $\vv(D)$ is the product of sizes of the vertical components in $D$;
        \item[] $\ff(D) \coloneqq \al! \al'! \be! \be'!$, with $\al = \al(D)$ and $\be = \be(D)$ as described above.
    \end{itemize}
\end{maintheorem}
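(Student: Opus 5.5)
The volume will come from a Stanley decomposition of the affine semigroup of nonnegative integer Monge matrices, $S_{pq} = \cone(\M_{pq}) \cap \N^{p\times q}$, graded by entry sum. First I will change coordinates to make the Monge condition transparent. Write $M$ in terms of its last row and column, together with the ``second differences'' $x_{ij} = M_{iJ}+M_{Ij}-M_{ij}-M_{IJ}$ for adjacent indices ($I=i+1$, $J=j+1$). These are the nonnegative coordinates of a simplicial-type description near the boundary. Nonnegativity of all entries is then not automatic: one must track which entries attain the minimum. The minimal entries of a Monge matrix form a staircase region. This is where a Delannoy path $D$ should enter: it records the boundary between the region $\al(D)$, where entries are governed by a ``row-type'' structure, and the region $\be(D)$, where they are governed by a ``column-type'' structure. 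Diagonal steps $\searrow$ record positions where both structures coincide.

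Concretely, the plan has the following steps.

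\emph{Step 1: triangulate.} I will triangulate the cone of nonnegative Monge matrices, using the extremal rays of Rudolf--Woeginger, into unimodular (or explicitly indexed) simplicial cones. The maximal cones will be indexed by pairs consisting of a Z-avoiding Delannoy path $D$ and a pair of standard-type fillings (linear extensions) of the two shapes $\al(D)$ and $\be(D)$. Z-avoidance is needed to make the indexing bijective: a Z-subpath would describe the same cone twice. From this triangulation I will read off a Stanley decomposition $S_{pq} = \bigoplus_\sigma \mathbf{x}^{a_\sigma}\k[\sigma]$.

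\emph{Step 2: sum the leading coefficients.} The normalized volume is the leading coefficient of the Hilbert function, multiplied by $(pq-1)!$. Each Stanley piece contributes $1/\prod(\text{degrees of generators})$, times $(pq-1)!$, divided by the normalization of $\Delta_{pq-1}$. For fixed $D$ I will then sum over the cones attached to $D$. The number of compatible orderings of the generators in the two regions should yield the factor $1/(\al!\al'!\be!\be'!)$, via a product of multinomial counts: row and column orderings are independent within each block. The generator degrees (entry sums of the extremal rays, which are 0/1 matrices of rectangle shape) should produce the factors $q^{\dd(D)}$ and $1/(p^q q^{p-1})$, together with $\vv(D)$. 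A vertical run of length $k$ gives $k$ interchangeable choices of which rung carries the ``corner'' generator, and hence a factor $k$.

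\emph{Step 3: verify.} I will check the formula on small cases. For $p=1$ or $q=1$ the polytope is the whole simplex, and the sum must equal $1$. For $2\times 2$ there is a single Monge inequality, which cuts the simplex in half.

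The main obstacle is Step 1: producing a decomposition that is genuinely disjoint and whose pieces are indexed exactly by Z-avoiding paths with the stated data. I would first try a pulling or shelling order on the Rudolf--Woeginger rays. Using the lexicographic order along the boundary path, I would aim to show that each point lies in a unique half-open cone, by constructing its path $D$ greedily from the northwest corner. I would then prove that the algebraic factors in Step 2 assemble into $\vv(D)/\ff(D)$ by induction on $p+q$, deleting the last step of $D$.
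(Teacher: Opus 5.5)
You have a genuine gap: Step 1 never constructs the decomposition, and the structure you predict for it is not the one that produces the formula. The missing idea is a canonical normal form in the Rudolf--Woeginger rays. First subtract from $M$ each column minimum; these are the coefficients of the column rays $\sD(j)$. Then subtract each row minimum of the result; these are the coefficients of the row rays $\sC(i)$. It is this step, not last-row/last-column coordinates, that turns entry nonnegativity into a pure support condition. By total monotonicity the remainder is supported on a frame $\Fr(\al,\be)$ for an osculating pair $(\al,\be)$. Its northeast and southwest parts then decompose uniquely, via second differences, into nonnegative combinations of the rectangle rays $\sA(i,j)$, $(i,j)\in\al$, and $\sB(i,j)$, $(i,j)\in\be$. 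The image of this section is cut out by supports alone: $\cor(\al)\subseteq A\subseteq\al$, $\cor(\be)\subseteq B\subseteq\be$, arbitrary column support, and a row support containing no full unshaded column interval $N_j$. Partitioning these supports into Boolean intervals (order ideals for $A$ and $B$, and a shelling of the resulting complex on $[p]$ for the rows) gives the Stanley decomposition. Note also that the zero region of the remainder is in general a fat skew shape rather than a path, so paths cannot index all points as your greedy construction suggests. What must be shown instead is a dimension count. Each connected component of the zero region forces at least one omitted row ray, and a Z forces two within one component. A $2\times 2$ square enlarges the region without reducing the forced omissions. Hence every piece attached to a non-path, or to a path containing a Z, has fewer than $pq$ generators and drops out of the leading coefficient. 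That is the real role of Z-avoidance; it has nothing to do with a cone being counted twice.

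Your Step 2 also assigns the factors to the wrong sources. For a given $D$ there is exactly one pair $(\al,\be)$, and every top-dimensional piece attached to it contains \emph{all} $|\al|+|\be|$ rectangle rays simultaneously; there is no further indexing by linear extensions. The factor $1/\ff(D)$ is simply the reciprocal product of their degrees, because $\deg\sA(i,j)=\deg\sB(i,j)=ij$ and $\prod_{(i,j)\in\al} ij=\al!\,\al'!$. A count of orderings can only multiply a contribution by a positive integer, never produce this reciprocal. For example, for $\al=(2,1)$ the rays have degrees $1,2,2$, giving exactly $1/4=1/(\al!\,\al'!)$, whereas weighting by the two standard fillings would give $1/2$. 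The integer the counting must produce is $\vv(D)$. In a top-dimensional piece, each connected component of $D$ accounts for exactly one omitted row ray, with $|V|$ free choices for a vertical component $V$ and a unique choice otherwise. So there are $\vv(D)$ such pieces, each containing $p-1-\dd(D)$ row rays of degree $q$ and all $q$ column rays of degree $p$. Summing $1/\prod\deg$ over them gives $\vv(D)\,q^{\dd(D)+1}/(p^q q^p\,\ff(D))$, which is exactly the term in the theorem.
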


To briefly illustrate Theorem~\ref{thm:volume in intro}, take $p=q=3$:
\[
{
\renewcommand{\arraystretch}{1.5}
\begin{array}{c|c|c|c|c|c|c|c|c|c|c}
    D \in \D_{pq}
    & 
    \begin{tikzpicture}[scale=.2]
        \foreach \x in {1,...,3}{\foreach \y in {1,...,3}{\node [dot] at (\x,\y) {};}}
        \draw[very thick] (1,3) --++ (0,-2) -- ++(2,0);
    \end{tikzpicture}
    & 
    \begin{tikzpicture}[scale=.2]
        \foreach \x in {1,...,3}{\foreach \y in {1,...,3}{\node [dot] at (\x,\y) {};}}
        \draw[very thick] (1,3) --++ (0,-1) -- ++(1,-1) -- ++(1,0);
    \end{tikzpicture}
    & 
    \begin{tikzpicture}[scale=.2]
        \foreach \x in {1,...,3}{\foreach \y in {1,...,3}{\node [dot] at (\x,\y) {};}}
        \draw[very thick] (1,3) --++ (0,-1) -- ++(1,0) -- ++(1,-1);
    \end{tikzpicture}
    & 
    \begin{tikzpicture}[scale=.2]
        \foreach \x in {1,...,3}{\foreach \y in {1,...,3}{\node [dot] at (\x,\y) {};}}
        \draw[very thick] (1,3) --++ (0,-1) -- ++(2,0) -- ++(0,-1);
    \end{tikzpicture}
    & 
    \begin{tikzpicture}[scale=.2]
        \foreach \x in {1,...,3}{\foreach \y in {1,...,3}{\node [dot] at (\x,\y) {};}}
        \draw[very thick] (1,3) --++ (1,0) -- ++(0,-1) -- ++(1,-1);
    \end{tikzpicture}
    & 
    \begin{tikzpicture}[scale=.2]
        \foreach \x in {1,...,3}{\foreach \y in {1,...,3}{\node [dot] at (\x,\y) {};}}
        \draw[very thick] (1,3) --++ (1,0) -- ++(1,-1) -- ++(0,-1);
    \end{tikzpicture}
    & 
    \begin{tikzpicture}[scale=.2]
        \foreach \x in {1,...,3}{\foreach \y in {1,...,3}{\node [dot] at (\x,\y) {};}}
        \draw[very thick] (1,3) --++ (2,0) -- ++(0,-2);
    \end{tikzpicture}
    & 
    \begin{tikzpicture}[scale=.2]
        \foreach \x in {1,...,3}{\foreach \y in {1,...,3}{\node [dot] at (\x,\y) {};}}
        \draw[very thick] (1,3) --++ (1,-1) -- ++(0,-1) -- ++(1,0);
    \end{tikzpicture}
    & 
    \begin{tikzpicture}[scale=.2]
        \foreach \x in {1,...,3}{\foreach \y in {1,...,3}{\node [dot] at (\x,\y) {};}}
        \draw[very thick] (1,3) --++ (1,-1) -- ++(1,0) -- ++(0,-1);
    \end{tikzpicture}
    &
    \begin{tikzpicture}[scale=.2]
        \foreach \x in {1,...,3}{\foreach \y in {1,...,3}{\node [dot] at (\x,\y) {};}}
        \draw[very thick] (1,3) --++ (1,-1) -- ++(1,-1);
    \end{tikzpicture}\\ \hline
    \dd(D) & 0 & 1 & 1 & 0 & 1 & 1 & 0 & 1 & 1 & 2 \\
    \vv(D) & 1 & 2 & 1 & 1 & 1 & 2 & 1 & 1 & 1 & 1  \\
    \ff(D) & 16 & 16 & 8 & 4 & 8 & 16 & 16 & 8 & 8 & 16 \\ \hline
    q^{\dd(D)} \frac{\vv(D)}{\ff(D)} & \frac{1}{16} & \frac{6}{16} & \frac{3}{8} & \frac{1}{4} & \frac{3}{8} & \frac{6}{16} & \frac{1}{16} & \frac{3}{8} & \frac{3}{8} & \frac{9}{16}
\end{array}
}
\]
Since the prefactor $p^q q^{p-1} = 3^3 \cdot 3^2 = 243$, we obtain
\[
    \Vol(\M_{3,3}) = \frac{1}{243} \left( \frac{1}{16} + \frac{6}{16} + \frac{3}{8} + \frac{1}{4} + \frac{3}{8} + \frac{6}{16} + \frac{1}{16} + \frac{3}{8} + \frac{3}{8} + \frac{9}{16} \right) = \frac{17}{1296}.
\]
Far beyond toy examples like this, however, Theorem~\ref{thm:volume in intro} affords an enormous computational advantage compared to direct algorithms for polytope volume, which use triangulation or related decomposition methods, and are notoriously expensive~\cite{GK}.
The cost of exact triangulation-based volume algorithms scales badly (typically exponentially) in the \emph{dimension} of the polytope;
in practice, this quickly becomes prohibitive even at relatively small dimension.
The 2020 benchmarking study~\cite{Enge} of exact volume algorithms, reproducing B\"ueler--Enge--Fukuda's original study~\cite{BEF}, reports timings only for polytopes of dimension up to 15, and even then, many of the algorithms exceeded memory or were ruled out as impractical~\cite{Enge}*{Table 1}.
Famously, the volume of the 10th Birkhoff polytope $\mathcal{B}_{10}$ (dimension 81) required nearly 17 years of runtime to compute, scaled to a 1GHz processor~\cite{BeckPixton03}*{p.~634}.

By contrast, the formula in Theorem~\ref{thm:volume in intro} lends itself easily to a standard dynamic-programming/transfer-matrix technique over lattice paths;
Mathematica code for this is available at the following link:
\begin{center}
    \url{https://github.com/WilliamQErickson/Monge-volume}
\end{center}
The dynamic program above sweeps the $p\times q$ grid once rather than enumerating Delannoy paths;
at each point in the grid, the program maintains a running  total for every possible local state (with respect to the three statistics in Theorem~\ref{thm:volume in intro}) which a path could have upon reaching that point.
We verified our formula in Theorem~\ref{thm:volume in intro} by checking it against brute-force triangulation methods in Sage, for the following $p$- and $q$-values (assuming $p \leq q$, since the volume of $\M_{pq}$ is symmetric in $p$ and $q$).
In the $p=q$ case, the authors' laptop computers seem unable to handle the brute-force computation beyond $p=q=6$.
\begingroup
\renewcommand{\arraystretch}{1.5}
\[
    \begin{array}{|>{\columncolor{lightgray}}c |c|c|c|c|c|c|c|}
    \hline
    \rowcolor{lightgray} p \backslash q & 1 & 2 & 3 & 4 & 5 & 6  \\ \hline
    1 & 1 & 1 & 1 & 1 & 1 & 1  \\ \hline
    2 & - & \frac{1}{2} & \frac{1}{6} & \frac{1}{24} & \frac{1}{120} & \frac{1}{720} \\ \hline
    3 & - & - & \frac{17}{1296} & \frac{37}{62208} & \frac{163}{9331200} & \frac{241}{671846400} \\ \hline
    4 & - & - & - & \frac{361}{95551488} & \frac{3623}{286654464000} & \frac{6163}{247669456896000} \\ \hline
    5 & - & - & - & - & \frac{84341}{21499084800000000} & \frac{336641}{557256278016000000000} \\ \hline
    6 & - & - & - & - & - & \frac{57455963}{9359765606561218560000000000} \\ \hline
    \end{array}
\]
\endgroup
By contrast, using the dynamic programming implementation of Theorem~\ref{thm:volume in intro} in the link above, we computed $\Vol(\M_{20,20})$ in a fraction of a second (despite $\M_{20,20}$ having dimension 399).
This underscores the fact that combinatorial methods exploiting the specific structure of a polytope family can reach regimes that are, at present, entirely out of reach for general-purpose exact-volume software.
We note, however, that such gains are not automatic: 
De Loera, Liu, and Yoshida~\cite{DLY09} found an elegant combinatorial formula for the volume of the Birkhoff polytope $\mathcal{B}_n$, expressed as a sum over permutations and rooted directed trees, but the number of these terms grows rapidly enough with $n$ that the formula does not outperform other methods for large $n$.
The efficiency of our formula instead relies on the fact that the terms of Theorem~\ref{thm:volume in intro} are indexed by a comparatively small\footnote{For the generating function that counts Z-avoiding Delannoy paths, see Proposition~\ref{prop:count Dpq} and the table in~\eqref{table Dpq}.} 
and highly structured set of
combinatorial objects (Z-avoiding Delannoy paths), reflecting a monotonic structure specific to the Monge condition.

\subsection*{Main technique: Stanley decompositions}

Our primary technical contribution (and the source of Theorem~\ref{thm:volume in intro}) is Theorem~\ref{thm:Stanley decomp}, giving an explicit Stanley decomposition of the affine semigroup $S(\M_{pq})$ associated to the Monge polytope.
A \emph{Stanley decomposition} of a commutative monoid $S$ is a finite disjoint union
\[
    S = \coprod_{i \in I} \: (a_i + \N B_i), \qquad a_i \in S, \text{ with $B_i \subset S$ linearly independent}.
\]
Thus every element $s \in S$ has a unique representation $s = a_i + \sum_{b \in B_i} n_b b$, where $i \in I$ and each $n_b \in \N$ (see Example~\ref{ex:sigma}).
Our Stanley decomposition of $S(\M_{pq})$ in Theorem~\ref{thm:Stanley decomp} is indexed by \emph{osculating pairs} $(\al, \be)$ of integer partitions, together with the facets of an associated shellable simplicial complex $\Delta(\al, \be)$; see Definition~\ref{def:Parpq and Opq}.
While this decomposition is of independent interest, we constructed it primarily to yield a combinatorial model for the Ehrhart series of $\M_{pq}$ (given below in Theorem~\ref{thm:Ehrhart in intro}, which in the body of the paper is a direct corollary of Theorem~\ref{thm:Stanley decomp}), and ultimately for the volume formula in Theorem~\ref{thm:volume in intro} as well.

\begin{maintheorem}[see Corollary~\ref{cor:Ehrhart series}]
    \label{thm:Ehrhart in intro}
    The Ehrhart series~\eqref{Ehrart series intro} of the Monge polytope is given by
    \[
    \Ehr(\M_{pq}; z) =
    \frac{1}{(1-z^p)^q} \cdot 
    \sum_{(\al, \be) \in \O_{pq}} \frac{z^{\d(\al, \be)} }{\prod_{(i,j) \in \al} (1-z^{ij}) \prod_{(i,j) \in \be} (1-z^{ij})} 
    \left( 
    \sum_{F \in \F(\al,\be)} \frac{z^{q \left| \res(F) \right|} }{ (1-z^q)^{|F|}} \right),
    \]
    where
    \begin{itemize}
        \item[] $\O_{pq}$ is the set of ``osculating pairs'' $(\al,\be)$ of integer partitions, given in Definition~\ref{def:Parpq and Opq}(\ref{osculating});
        \item[] $\d( \al, \be)$ is the statistic given in Definition~\ref{def:Parpq and Opq}(\ref{d definition});
        \item[] $\F(\al,\be)$ consists of certain sets $F$ where $\res(F) \subseteq F \subseteq \{1, \ldots, p\}$, as given by Construction~\ref{const:F and R}.
    \end{itemize}
\end{maintheorem}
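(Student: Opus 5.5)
The plan is to read the Ehrhart series directly off the Stanley decomposition of the affine semigroup $S(\M_{pq})$ in Theorem~\ref{thm:Stanley decomp}, with each Stanley piece contributing one rational function.

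First I will fix the grading. The cone over $\M_{pq}$ is the cone of nonnegative Monge matrices, and the lattice points at height $t$ are the Monge matrices in $\N^{p \times q}$ with entry sum $t$. So
\[
\Ehr(\M_{pq}; z) = \sum_{M \in S(\M_{pq})} z^{|M|},
\]
where $|M|$ is the sum of the entries of $M$. Because the decomposition is a \emph{disjoint} union of translated free monoids $a_i + \N B_i$ with $B_i$ linearly independent, each element has a unique representation. The grading $|\cdot|$ is additive, so each piece contributes
\[
\frac{z^{|a_i|}}{\prod_{b \in B_i} \bigl(1 - z^{|b|}\bigr)}.
\]
Summing these contributions over the index set $I$ gives the Ehrhart series.

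Next I will match this sum to the formula in Theorem~\ref{thm:Ehrhart in intro}. By Theorem~\ref{thm:Stanley decomp}, the index set $I$ consists of pairs $((\al,\be), F)$ with $(\al,\be) \in \O_{pq}$ and $F \in \F(\al,\be)$, a facet of the shellable complex $\Delta(\al,\be)$. I will sort the generators in $B_i$ into three types and compute their entry sums.
\begin{itemize}
\item[] There are $q$ generators common to every cone, each of weight $p$ (the column-type extremal rays). These account for the global factor $(1-z^p)^{-q}$.
\item[] There is one generator for each cell $(i,j) \in \al$ and each cell $(i,j) \in \be$. Each is a block-type $0/1$ Monge matrix supported on an $i \times j$ rectangle, so it has weight $ij$. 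These give the two products in the denominator.
\item[] There is one generator for each element of $F \subseteq \{1,\ldots,p\}$, each a row-type ray of weight $q$. These give $(1-z^q)^{-|F|}$.
\end{itemize}

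Finally, the shift $a_i$ must have weight $\d(\al,\be) + q\,|\res(F)|$. The part $\d(\al,\be)$ is the weight of the base point attached to the osculating pair, as in Definition~\ref{def:Parpq and Opq}(\ref{d definition}). The part $q\,|\res(F)|$ comes from the shelling. In a shelling, each facet $F$ contributes a half-open cone whose apex is the sum of the generators indexed by its restriction set $\res(F)$, as in Construction~\ref{const:F and R}. Each such generator has weight $q$, giving $z^{q|\res(F)|}$. Factoring out the terms common to all $F$ for a fixed pair $(\al,\be)$ then yields exactly the displayed double sum.

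The only real work is the bookkeeping: checking that every generator and shift in Theorem~\ref{thm:Stanley decomp} has the stated weight. This amounts to computing entry sums of the explicit extremal-ray matrices. Once that is checked, the corollary is immediate.
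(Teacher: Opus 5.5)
Your proposal is correct and is essentially the paper's proof: the Ehrhart series equals the Hilbert series of $\k[S(\M_{pq})]$, and each Stanley space of Theorem~\ref{thm:Stanley decomp} contributes one rational function. The degrees $\deg \sA(i,j) = \deg \sB(i,j) = ij$, $\deg \sC(i) = q$, $\deg \sD(j) = p$ give the denominators and the numerator exponent $\d(\al,\be) + q\,|\res(F)|$, after which the $F$-independent factors are pulled out. The one step you state loosely is where $\d(\al,\be)$ comes from: it is the degree of the part $\1_{\sA(\cor(\al)) \cup \sB(\cor(\be))}$ of the shift, i.e.\ the sum of $ij$ over the corners of $\al$ and $\be$, which you can read directly off the theorem.
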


Just as we did for Theorem~\ref{thm:volume in intro}, we have written out the details for Theorem~\ref{thm:Ehrhart in intro}, term by term, in the special case $p=q=3$; due to its length, we refer the reader to Appendix~\ref{appendix:3 by 3}.
As a preview, using Theorem~\ref{thm:Ehrhart in intro} and taking the power series expansion yields
\[
    \Ehr(\M_{3,3}; z) = 1 + 2 z + 7 z^2 + 18 z^3 + 41 z^4 + 86 z^5 + 176 z^6 + 325 z^7 + 
 587 z^8 + 1016 z^9 + 1686 z^{10} + \cdots.
\]
See below for the concrete verification of the first few terms:
\begingroup
\renewcommand{\arraystretch}{1.5}
\[
\begin{array}{c|l|c}
    t & \Big\{\text{Monge matrices in $\N^{3 \times 3}$ with entries summing to $t$}\Big\} & \text{Count} \\ \hline
    0 & \left\{\mat{0&0&0\\0&0&0\\0&0&0} \right\} & 1 \\[2ex]
    1 & \left\{ \mat{0&0&1\\0&0&0\\0&0&0}, \mat{0&0&0\\0&0&0\\1&0&0} \right\} & 2 \\[2ex]
    2 & \left\{ \mat{0&1&1\\0&0&0\\0&0&0}, \mat{0&0&2\\0&0&0\\0&0&0}, \mat{0&0&1\\0&0&1\\0&0&0}, \mat{0&0&1\\0&0&0\\1&0&0}, \mat{0&0&0\\1&0&0\\1&0&0}, \mat{0&0&0\\0&0&0\\2&0&0}, \mat{0&0&0\\0&0&0\\1&1&0} \right\} & 7 \\[2ex]
    3 & \left\{ \begin{array}{l} \mat{1&1&1\\0&0&0\\0&0&0},
\mat{1&0&0\\1&0&0\\1&0&0},
\mat{0&1&2\\0&0&0\\0&0&0},
\mat{0&1&1\\0&0&0\\1&0&0},
\mat{0&1&0\\0&1&0\\0&1&0},
\mat{0&0&3\\0&0&0\\0&0&0},
\mat{0&0&2\\0&0&1\\0&0&0},
\mat{0&0&2\\0&0&0\\1&0&0},
\mat{0&0&1\\1&0&0\\1&0&0}, \\[1ex]
\mat{0&0&1\\0&0&1\\1&0&0},
\mat{0&0&1\\0&0&1\\0&0&1},
\mat{0&0&1\\0&0&0\\2&0&0},
\mat{0&0&1\\0&0&0\\1&1&0},
\mat{0&0&0\\1&1&1\\0&0&0},
\mat{0&0&0\\1&0&0\\2&0&0},
\mat{0&0&0\\0&0&0\\3&0&0},
\mat{0&0&0\\0&0&0\\2&1&0},
\mat{0&0&0\\0&0&0\\1&1&1}
\end{array}
\right\} & 18
\end{array}
\]
\endgroup

We close this introduction with a more detailed remark on method.
Our main results arose from a two-step template:
first exhibit a Stanley decomposition of the affine semigroup, and then use it to derive a combinatorial interpretation for the Hilbert series~\eqref{Hilb k[S]} and the multiplicity~\eqref{e(k[S])} of the semigroup ring.
(The Hilbert series falls directly out of a Stanley decomposition, but in order to obtain a multiplicity formula,  one must carefully describe the Stanley spaces of maximal Krull dimension.) 
In the polytope setting of the present paper, this template takes the concrete form of an Ehrhart series~\eqref{Ehrart series intro} and a volume~\eqref{volume in intro}, respectively, but the underlying strategy is not specific to polytope problems. 
In unrelated recent joint work with M.~Hunziker~\cites{EHJellyfish,EHBernstein}, we used the identical template in the context of unitary highest weight modules (where the multiplicity is known as the ``Bernstein degree'').
In fact, the combinatorial model in those two papers also involved lattice paths. 
It is tempting to wonder whether this Stanley decomposition template might offer a useful strategy in other settings where a Hilbert series or multiplicity is otherwise hard to write down; we hope to explore this further elsewhere.

\section{Preliminaries}
\label{sec:preliminaries}

In this section, we record some standard facts from combinatorial commutative algebra which we will need in the rest of the paper.

\subsection*{Semigroup rings and Hilbert series}

We largely follow the exposition in~\cite{MillerSturmfels}*{Ch.~7}.
Throughout the paper we write $\N$ for the set of nonnegative integers.
Let $S$ be a finitely generated commutative monoid.
(Following standard usage in this area, for example Bruns--Herzog~\cite{BrunsHerzog} and Miller--Sturmfels~\cite{MillerSturmfels}), we will also use the term ``semigroup'' for what is, strictly speaking, a commutative monoid, that is, a semigroup possessing an identity element.)
Given a generating set $Y \subseteq S$, and an arbitrary subset $A \subseteq Y$, we use the shorthand $\1_A \in S$ for the sum over $A$:
\begin{equation}
    \label{1 notation}
    \1_{A} \coloneqq \sum_{a \in A} a.
\end{equation}
Given a finite subset $B \subset S$, we write $\N B$ to denote the submonoid of $S$ generated by $B$.

On the other hand, given any finite set $X$, we will write $\N^X$ to denote the free monoid with basis $X$.
We write elements of $\N^X$ as tuples $(n_x)_{x \in X}$, where each $n_x \in \N$.
Note that in this setting, the element $\1_A$ in~\eqref{1 notation} is the usual 0-1 indicator vector of $A \subseteq X$.
We define the \emph{support map} via
\begin{equation}
    \label{supp definition}
    \begin{split}
        \supp : \N^X &\longrightarrow 2^X , \\
        (n_x)_{x \in X} & \longmapsto \{ x : n_x > 0 \}.
    \end{split}
\end{equation}

Back in the general setting, assume that $S$ is $\N$-graded, so that
\begin{equation}
    \label{S grading}
    S = \coprod_{t=0}^\infty S_t, \quad \text{where $S_t + S_{t'} \subseteq S_{t+t'}$ for all $t,t' \in \N$}.
\end{equation}
Upon fixing a field $\k$, the \emph{semigroup ring} of $S$ is the $\k$-algebra with $\k$-basis $\{x^s : s \in S \}$, that is,
\begin{equation}
    \label{k[S]}
    \k[S] \coloneqq \bigoplus_{s \in S} \k x^s,
\end{equation}
where $x$ is a formal indeterminate, with multiplication given by $x^s \cdot x^{s'} = x^{s+s'}$.
The semigroup ring $\k[S]$ inherits a grading from~\eqref{S grading} in the obvious way:
\begin{equation}
    \label{k[S] grading}
    \k[S] = \bigoplus_{t=0}^\infty \k[S]_t, \qquad \text{where } \k[S]_t \coloneqq \bigoplus_{\mathclap{s \in S_t}} \k x^s.
\end{equation}
Note that by~\eqref{k[S] grading},
\begin{equation}
    \label{dim k[S] is size St}
    \dim_\k \k[S]_t = \left| S_t \right|.
\end{equation}
The \emph{Hilbert series} of $\k[S]$ is the generating function of the dimension (as a $\k$-vector space) of each graded piece:
\begin{equation}
    \label{Hilb k[S]}
    \Hilb(\k[S]; z) \coloneqq \sum_{t=0}^\infty \dim_{\k} \k[S]_t \, z^t. 
\end{equation}
Since $\k[S]$ is a finitely generated positively graded $\k$-algebra, its Hilbert series is a rational function.
This leads to the notion of the \emph{Krull dimension}
\begin{equation}
    \label{Kdim k[S]}
    d = \Kdim \k[S] \coloneqq \text{the order of the pole of $\Hilb(\k[S]; z)$ at $z=1$},
\end{equation}
as well as the \emph{multiplicity}
\begin{equation}
    \label{e(k[S])}
    e(\k[S]) \coloneqq \lim_{z \rightarrow 1} (1-z)^{d} \Hilb(\k[S];z).
\end{equation}
By the classical theory of Hilbert functions of graded rings, if the Hilbert function $H(t) \coloneqq \dim_{\k} \k[S]_t$ eventually (for $t \gg 0$) agrees with a quasi-polynomial in $t$, then
\begin{equation}
    \label{Kdim and H(t)}
    \Kdim \k[S] = 1 + \text{degree of $H(t)$}
\end{equation}
and
\begin{equation}
    \label{e and H(t)}
    e( \k[S] ) = ( \text{degree of $H(t)$} )! \cdot ( \text{leading coefficient of $H(t)$} ).
\end{equation}

\subsection*{Ehrhart theory}

Next we specialize the preceding theory to the case where $S$ is the affine semigroup associated to a rational polytope;
standard references are Stanley~\cites{Stanley1980, StanleyCCA}, Miller--Sturmfels~\cite{MillerSturmfels}*{Ch.~7}, Barvinok~\cite{Barvinok}, Beck--Robins~\cite{BeckRobins}, and Bruns--Gubeladze~\cite{BrunsGubeladze}.
A \emph{rational
polytope} $\P \subset \R^n$ is the convex hull of finitely many
points of $\mathbb{Q}^n$.
The \emph{dimension} of~$\P$, denoted by $\dim \P$, is the dimension of its affine hull.
The \emph{intrinsic volume} of $\P$ is its Euclidean volume with respect to its affine hull (rather than with respect to its ambient space $\R^n$). 
The \emph{normalized volume} of $\P$ is given by
\begin{equation}
    \label{Vol}
    \Vol \P \coloneqq (\dim \P)! \cdot (\text{intrinsic volume of $\P$}).
\end{equation}
Note that the standard simplex (of any dimension) has normalized volume $1$.

Embedding $\R^n \hookrightarrow \R^{n+1}$ via $\mathbf{x} \mapsto (\mathbf{x},1)$, we define the rational polyhedral cone
\begin{equation}
    \label{cone P}
    \cone(\P) \coloneqq \big\{ \lambda(\mathbf{x},1) : \mathbf{x} \in \P, \; \lambda \in
    \R_{\ge 0} \big\} \subset \R^{n+1}.
\end{equation}
The \emph{affine semigroup} associated to $\P$ is
\begin{equation}
    \label{S(P)}
    S(\P) \coloneqq \cone(\P) \cap \mathbb{Z}^{n+1},
\end{equation}
which consists of all lattice points in this cone.
Grading $\N^{n+1}$ by the last coordinate gives $S(\P)$ an $\N$-grading by
height $t$, so that~\eqref{S grading} specializes to
\begin{equation}
    \label{S(P) grading}
    S(\P) = \coprod_{t=0}^\infty S(\P)_t, \quad \text{where }
  S(\P)_t \coloneqq (t \P \cap \mathbb{Z}^n) \times \{t\}.
\end{equation}
The number of lattice points in the $t$-fold dilation of $\P$ is denoted by
\begin{equation}
    \label{L(Pt)}
    L(\P;t) \coloneqq \bigl| t \P \cap \mathbb{Z}^n \bigr| = \left|  S(\P)_t \right|.
\end{equation}
In particular, $L(\P;0) = 1$.
By results of Ehrhart~\cite{Ehrhart62} and McMullen~\cite{McMullen78}, 
\begin{equation}
    \label{L quasi-poly}
    \begin{array}{l}
    \text{$L(\P;t)$ is a quasi-polynomial in $t$, whose degree equals $\dim \P$,} \\
    \text{and whose leading coefficient is the intrinsic volume of $\P$}.
    \end{array}
\end{equation}
The associated generating function
\begin{equation}
    \label{Ehr(Pz)}
    \Ehr(\P;z) \coloneqq \sum_{t=0}^\infty L(\P;t)\, z^t,
\end{equation}
in the formal indeterminate $z$,
is the \emph{Ehrhart series} of $\P$.
Note that the Ehrhart series of $\P$ equals the Hilbert series of the semigroup ring $\k[S(\P)]$, since
\begin{equation}
    \label{Ehr Hilb}
    \Ehr(\P;z) = \sum_{t=0}^\infty L(\P;t) \, z^t = \sum_{t=0}^\infty \left| S(\P)_t \right| \, z^t = \sum_{t=0}^\infty \dim_{\k} \k[S(\P)]_t \, z^t = \Hilb(\k[S(\P)]; z),
\end{equation}
where the equalities follow from~\eqref{Ehr(Pz)}, \eqref{L(Pt)}, \eqref{dim k[S] is size St}, and~\eqref{Hilb k[S]}, respectively.
Because of this connection, some authors call $\k[S(\P)]$ the \emph{Ehrhart ring} of $\P$.
The following key lemma highlights the dictionary between the algebraic invariants of the Ehrhart ring, on one hand, and the geometric properties of the polytope itself, on the other hand.

\begin{lemma}
    \label{lemma:Kdim and e are dim and vol}
    Let $\P$ be a rational polytope, with affine semigroup $S(\P)$ as defined in~\eqref{S(P)}, and Ehrhart ring $\k[S(\P)]$ as defined in~\eqref{k[S]}.
    \begin{enumerate}[label=\textup{(\arabic*)}]
        \item \label{Kdim = dim P + 1} $\Kdim \k[S(\P)] = \dim \P + 1$, where $\Kdim$ is the Krull dimension~\eqref{Kdim k[S]}.

        \item \label{e = Vol} $e(\k[S(\P)]) = \Vol \P$, where $e$ is the multiplicity~\eqref{e(k[S])}, and $\Vol$ the normalized volume~\eqref{Vol}.
    \end{enumerate}
\end{lemma}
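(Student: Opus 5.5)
The plan is to derive both statements directly from the Hilbert-function facts \eqref{Kdim and H(t)} and \eqref{e and H(t)}, applied to the Ehrhart quasi-polynomial \eqref{L quasi-poly}, using the identification \eqref{Ehr Hilb} of the Ehrhart series with the Hilbert series of $\k[S(\P)]$.

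By \eqref{dim k[S] is size St} and \eqref{L(Pt)}, the Hilbert function of $\k[S(\P)]$ is
\[
H(t) = \dim_\k \k[S(\P)]_t = |S(\P)_t| = L(\P;t).
\]
By \eqref{L quasi-poly}, $H(t)$ agrees with a quasi-polynomial for all $t$, so the hypothesis needed for \eqref{Kdim and H(t)} and \eqref{e and H(t)} is satisfied.

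For part \ref{Kdim = dim P + 1}, the same fact \eqref{L quasi-poly} says this quasi-polynomial has degree $\dim \P$. Then \eqref{Kdim and H(t)} gives $\Kdim \k[S(\P)] = 1 + \dim \P$.

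For part \ref{e = Vol}, \eqref{L quasi-poly} says the leading coefficient is the intrinsic volume of $\P$. So \eqref{e and H(t)} gives
\[
e(\k[S(\P)]) = (\dim \P)! \cdot (\text{intrinsic volume of } \P),
\]
which is exactly $\Vol \P$ by the definition \eqref{Vol}.

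The only real subtlety, and the step I would check most carefully, is the meaning of "degree" and "leading coefficient" for a quasi-polynomial. For a rational (non-lattice) polytope the lower-order coefficients may be periodic in $t$. However, the top-degree coefficient is constant, equal to the relative volume, with respect to the lattice $\mathrm{aff}(\P) \cap \mathbb{Z}^n$ suitably normalized. This is precisely what \eqref{L quasi-poly} asserts.

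I would also verify the pole-order and limit definitions \eqref{Kdim k[S]} and \eqref{e(k[S])} against this. Writing the Hilbert series as a rational function whose pole at $z=1$ comes only from the constant leading term of $H$, we have
\[
\sum_t c\, t^{d-1} z^t \sim \frac{c\,(d-1)!}{(1-z)^d}.
\]
The periodic lower terms contribute poles of order at most $d-1$ at $z=1$. This confirms \eqref{Kdim and H(t)} and \eqref{e and H(t)} in the quasi-polynomial setting.

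The normalization of "intrinsic volume" must be taken relative to the lattice in the affine hull. This is consistent with the paper's convention that the standard simplex has normalized volume~$1$.
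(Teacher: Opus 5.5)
Your proposal is correct and matches the paper's proof. You identify $L(\P;t)$ with the Hilbert function of $\k[S(\P)]$ via \eqref{Ehr Hilb}, then read off the Krull dimension and multiplicity from \eqref{Kdim and H(t)}--\eqref{e and H(t)} using the degree and leading coefficient given by \eqref{L quasi-poly} and the definition \eqref{Vol}.

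One caveat on your extra remarks. The point about lattice-relative normalization is a genuine sharpening of the paper's ``Euclidean'' wording. However, the top coefficient is guaranteed to be constant only when $\mathrm{aff}(\P)$ contains a lattice point. That holds for $\M_{pq}$, whose affine hull is $\sum_{i,j} M_{ij} = 1$, but it fails for $\P = \{1/2\} \subset \R$, where $e = 1/2 \neq 1 = \Vol \P$. So that claim is an assumption inherited from \eqref{L quasi-poly}, not a general fact about rational polytopes.
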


\begin{proof}
    It is immediate from~\eqref{Ehr Hilb} that $L(\P; t)$ is the Hilbert function $H(t)$ of $\k[S(\P)]$, in the sense of~\eqref{Kdim and H(t)}--\eqref{e and H(t)}.
    Part (1) now follows from~\eqref{Kdim and H(t)} and~\eqref{L quasi-poly}.
    Likewise, combining~\eqref{e and H(t)} and~\eqref{L quasi-poly} with~\eqref{Vol}, we have
    \[
        e( \k[S(\P)]) = (\dim \P)! \cdot (\text{intrinsic volume of $\P$}) \eqcolon \Vol(\P),
    \]
    which proves part (2).
\end{proof}

\subsection*{Stanley decompositions}

As mentioned in the introduction, our technique in this paper involves a type of commutative monoid decomposition due to Stanley~\cite{Stanley82}*{Thm.~5.2} which we have found to be useful in writing down the Krull dimension and (especially) the multiplicity of a semigroup ring $\k[S]$.
Let $S$ be a finitely generated commutative monoid.
Given a finite subset $B \subset S$, let $\N B$ denote the submonoid generated by $B$.
A \emph{Stanley decomposition} of $S$ is a finite disjoint union
\begin{equation}
    \label{Stanley decomp S}
    S = \coprod_i \: (a_i + \N B_i),
\end{equation}
where each $a_i \in S$, and each $B_i \subset S$ is finite and linearly independent (therefore $\N B_i \cong \N^{B_i}$ is a free submonoid of $S$ with basis $B_i$).
A Stanley decomposition~\eqref{Stanley decomp S} of $S$ induces the following vector space decomposition (still called a Stanley decomposition) of the semigroup ring $\k[S]$ from~\eqref{k[S]}:
\begin{equation}
    \label{Stanley decomp of k[S]}
    \k[S] = \bigoplus_i x^{a_i} \cdot \k\big[ \{x^{b} : b \in B_i \} \big].
\end{equation}
Each direct summand in~\eqref{Stanley decomp of k[S]} is called a \emph{Stanley space}.
Writing $\deg s = t$ if $s \in S_t$, observe that given a Stanley decomposition~\eqref{Stanley decomp S} of $S$, we can express the Hilbert series of $\k[S]$ as a finite sum of rational functions:
\begin{equation}
    \label{Hilb k[S] with Stanley}
    \Hilb(\k[S];z) = \sum_i \frac{z^{\deg a_i}}{\prod_{b \in B_i} (1 - z^{\deg b})}.
\end{equation}

\begin{lemma}
    \label{lemma:Stanley decomp gives Kdim and e}
    Let $S$ be a finitely generated commutative monoid admitting a Stanley decomposition
        \[
            S = \coprod_{i} \: (a_i + \N B_i)
        \]
    as in~\eqref{Stanley decomp S}.
    \begin{enumerate}[label=\textup{(\arabic*)}]
    
        \item \label{Kdim = max} The Krull dimension~\eqref{Kdim k[S]} of the semigroup ring $\k[S]$ in~\eqref{k[S]} is given by
        \[
            d \coloneqq \Kdim \k[S] = \max_i \left| B_i \right|.
        \]
    
    \item \label{e = sum over max} The multiplicity~\eqref{e(k[S])} of $\k[S]$ is given by
        \[
            e(\k[S]) = \sum_{\substack{i: \\ \left| B_i \right| = d}} \frac{1}{\prod_{b \in B_i} \deg b}.
        \]

    \end{enumerate}
    
\end{lemma}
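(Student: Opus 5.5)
The plan is to work directly from the Hilbert series formula~\eqref{Hilb k[S] with Stanley}. Because the Stanley decomposition is a finite disjoint union, the series is a finite sum of rational functions
\[
    \Hilb(\k[S];z) = \sum_i h_i(z), \qquad h_i(z) \coloneqq \frac{z^{\deg a_i}}{\prod_{b \in B_i} (1 - z^{\deg b})}.
\]
I would analyze the behavior of each summand $h_i$ near $z=1$ and then add the results. For now, assume every generator has positive degree. If some $b \in B_i$ had $\deg b = 0$, the graded pieces could not be finite-dimensional, because $a_i + n b$ would lie in degree $\deg a_i$ for every $n$ and these elements are distinct. Finite-dimensionality of each $\k[S]_t$ is implicit in~\eqref{Hilb k[S]}, so this case does not occur.

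The local computation comes first. For an integer $m \ge 1$ we have $1 - z^m = (1-z)(1 + z + \cdots + z^{m-1})$, and the second factor tends to $m$ as $z \to 1$. It follows that
\[
    (1-z)^{|B_i|} \, h_i(z) \longrightarrow \frac{1}{\prod_{b \in B_i} \deg b} \quad \text{as } z \to 1 .
\]
So $h_i$ has a pole of order exactly $|B_i|$ at $z=1$, and its leading coefficient there is strictly positive.

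Now set $d = \max_i |B_i|$ and multiply the whole sum by $(1-z)^d$. Each term with $|B_i| < d$ tends to $0$. Each term with $|B_i| = d$ tends to $1/\prod_{b\in B_i}\deg b$, which is positive. The main point to check is that no cancellation happens among the top-order poles. This holds because all the limiting coefficients are positive reals, so their sum $\sum_{|B_i|=d} 1/\prod \deg b$ is strictly positive. Hence the pole of $\Hilb(\k[S];z)$ at $z=1$ has order exactly $d$, which proves~\ref{Kdim = max} by the definition~\eqref{Kdim k[S]}. The value of that limit is then exactly the multiplicity~\eqref{e(k[S])}, which proves~\ref{e = sum over max}.

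I expect no serious obstacle. The only delicate steps are the no-cancellation argument, which positivity handles, and the observation that the degrees $\deg b$ are positive integers, so that each factor $1 - z^{\deg b}$ really contributes a simple zero at $z=1$.
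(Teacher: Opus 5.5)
Your proof is correct and follows the paper's approach. The paper's proof is a one-line appeal to comparing the Hilbert series formula~\eqref{Hilb k[S] with Stanley} with the definitions of Krull dimension (order of the pole at $z=1$) and multiplicity (the limit of $(1-z)^d$ times the Hilbert series). You supply the details it leaves implicit: all degrees are positive, each summand has a pole of order exactly $|B_i|$ with positive leading coefficient $1/\prod_{b \in B_i}\deg b$, and positivity rules out cancellation among the top-order terms.
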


\begin{proof}
    Both parts follow from comparing~\eqref{Hilb k[S] with Stanley} with the facts~\eqref{Kdim k[S]} and~\eqref{e(k[S])}, respectively.
\end{proof}

(In the context of Ehrhart theory, see~\cites{BIS16,Stanley1980} for the geometric meaning of a Stanley decomposition of the affine semigroup of a rational polytope.)
Before giving the next series of lemmas, we first recall that given sets $A \subseteq B$, one defines the Boolean interval
    \begin{equation}
        \label{Boolean interval}
        [A,B] \coloneqq \{C : A \subseteq C \subseteq B \}.
    \end{equation}

\begin{lemma}
    \label{lemma:support}
    Let $X$ be a finite set, and let $Z \subseteq 2^X$.
    Recall the map $\supp : \N^X \longrightarrow 2^X$ from~\eqref{supp definition}.
    If $Z$ admits a finite disjoint union of Boolean intervals
    \[
        Z = \coprod_i \: [A_i, B_i],
    \]
    then $\supp^{-1}(Z) \subseteq \N^X$ admits the Stanley decomposition
    \[
        \supp^{-1}(Z) = \coprod_i \: ( \1_{A_i} + \N B_i ).
    \]
\end{lemma}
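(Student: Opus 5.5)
The plan is to partition $\supp^{-1}(Z)$ according to supports and then handle each Boolean interval separately. Since the support map sends each vector to exactly one subset of $X$, and the intervals $[A_i,B_i]$ are pairwise disjoint with union $Z$, we get
\[
    \supp^{-1}(Z) = \coprod_i \supp^{-1}\big([A_i,B_i]\big).
\]
So it will suffice to prove the single-interval identity $\supp^{-1}([A,B]) = \1_A + \N B$ for sets $A \subseteq B \subseteq X$. The linear independence required in~\eqref{Stanley decomp S} is automatic here: $B$ is a set of standard basis vectors $\1_{\{b\}}$ of $\N^X$, so $\N B \cong \N^{B}$ is free, and $\1_A \in \N^X$.

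For the inclusion $\supseteq$, take $v = \1_A + \sum_{b \in B} n_b \1_{\{b\}}$ with each $n_b \in \N$. Every $a \in A$ has coordinate at least $1$, and every coordinate outside $B$ is $0$ because $A \subseteq B$. Hence $A \subseteq \supp v \subseteq B$.

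For the inclusion $\subseteq$, take $v = (n_x)$ with $A \subseteq \supp v \subseteq B$. Define $m_x = n_x - 1$ for $x \in A$ and $m_x = n_x$ otherwise. Then $m_x \ge 0$, since $n_x \ge 1$ on $A$, and $m_x = 0$ outside $B$. Therefore $v = \1_A + \sum_{b\in B} m_b \1_{\{b\}} \in \1_A + \N B$.

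Uniqueness of representation within each piece is immediate, because the coefficients $m_b$ are determined coordinatewise by $v$. Disjointness across different $i$ has already been handled by the support partition.

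There is no real obstacle; the only care needed is in the bookkeeping that disjointness of the intervals transfers to disjointness of the Stanley spaces, and that is exactly what the first step does.
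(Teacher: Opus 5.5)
Your proposal is correct and follows essentially the same route as the paper: split $\supp^{-1}(Z)$ into the disjoint pieces $\supp^{-1}([A_i,B_i])$, then identify each piece coordinatewise with $\1_{A_i} + \N B_i$. You simply spell out both inclusions and the linear independence of $B_i$ explicitly, where the paper compresses these into a single chain of equalities.
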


\begin{proof}
    We have
    \begin{align*}
        \supp^{-1}(Z) &= \supp^{-1} \left( \coprod_i \: [A_i, B_i] \right) \\
        &= \coprod_i \supp^{-1}\left( [A_i, B_i] \right) \\
        &= \coprod_i \Big\{ \mathbf{n} \in \N^X : A_i \subseteq \supp(\mathbf{n}) \subseteq B_i \Big\} \\
        &= \coprod_i \Big\{ (n_x)_{x \in X} : n_x > 0 \text{ for all } x \in A_i, \text{ and } n_x = 0 \text{ for all } x \notin B_i \Big\} \\
        &= \coprod_i \: (\1_{A_i} + \N B_i). \qedhere
    \end{align*}
\end{proof}

We will apply Lemma~\ref{lemma:support} in two specific settings, which we discuss next.
In the first of these settings, the role of $X$ is played by a finite poset.
Let $P$ be a finite poset, and recall that a \emph{lower order ideal} of $P$ is a subset $I \subseteq P$ such that $y \leq x \in I$ implies $y \in I$.
We write
    \begin{equation}
        \label{I(P)}
        \I(P) \coloneqq \{ \text{lower order ideals of $P$} \}.
    \end{equation}
For each $I \in \I(P)$, we write
    \begin{equation}
        \label{max(I)}
        \max(I) \coloneqq \{ \text{maximal elements in $I$} \}.
    \end{equation}
(Viewed as a map, ``$\max$'' is the well-known bijection between $\I(P)$ and the set of antichains in~$P$.)

\begin{lemma}
    \label{lemma:poset partition}
    If $P$ is a finite poset, then
    \[
        2^P = \coprod_{I \in \I(P)} [ \max(I), \: I].
    \]
\end{lemma}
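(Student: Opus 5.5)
The plan is to construct an explicit bijection from subsets of $P$ to lower order ideals. For each subset $C \subseteq P$, the intended ideal is the lower order ideal it generates,
\[
    I_C \coloneqq \{ y \in P : y \leq c \text{ for some } c \in C \}.
\]
The claim to establish is that $C \in [\max(I), I]$ holds exactly when $I = I_C$. This gives both parts of the decomposition at once. Every $C$ lies in some interval, namely the one indexed by $I_C$. Moreover, that interval is unique, so the union is disjoint.

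First I check that $C \in [\max(I_C), I_C]$. The inclusion $C \subseteq I_C$ is immediate from the definition. For the other inclusion, let $m$ be a maximal element of $I_C$. Then $m \leq c$ for some $c \in C \subseteq I_C$, and maximality of $m$ forces $m = c$, so $m \in C$. Hence $\max(I_C) \subseteq C$.

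Conversely, suppose $I \in \I(P)$ satisfies $\max(I) \subseteq C \subseteq I$; I show that $I = I_C$. Since $C \subseteq I$ and $I$ is a lower order ideal, we get $I_C \subseteq I$. For the reverse inclusion, take $y \in I$. Because $P$ is finite, there is a maximal element $m$ of $I$ with $y \leq m$, obtained by walking upward inside $I$ until the chain stops. Then $m \in \max(I) \subseteq C$, so $y \in I_C$. Therefore the interval containing $C$ is unique, and $2^P$ is the disjoint union as claimed.

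There is no serious obstacle here. The only point needing care is the use of finiteness, which guarantees that every element of $I$ lies below some maximal element of $I$. The empty set is handled consistently by the same argument: it corresponds to the empty ideal, whose set of maximal elements is empty.
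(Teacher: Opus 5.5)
Your proposal is correct and follows essentially the same route as the paper: both show that $C \in [\max(I), I]$ holds exactly when $I$ is the lower closure of $C$, using the same two inclusions. The only difference is that you spell out, via finiteness, why every element of $I$ lies below a maximal element (the fact $I = {\downarrow}\max(I)$), which the paper uses without comment.
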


\begin{proof}
    Let $Q \subseteq P$, and let ${\downarrow \!\! Q}$ be the lower closure of $Q$, that is, $\downarrow Q$ is the smallest lower order ideal of $P$ that contains $Q$.
    It suffices to show that $Q \in [\max(I), \: I]$ if and only if $I = {\downarrow \!\! Q}$.
    In one direction, it is clear that $Q \in [\max(\downarrow \! \! Q), \: \downarrow \!\! Q]$, since $Q \subseteq \downarrow \!\! Q$ by definition, and any maximal element of $\downarrow \!\! Q$ must belong to $Q$ (otherwise it would be strictly less than some element of $Q \subseteq \downarrow\!\!Q$, which is a contradiction).
    In the other direction, suppose that $I \in \I(P)$ such that $Q \in [\max(I), \: I]$.
    Since $Q \subseteq I$, we have $\downarrow \!\! Q \subseteq I$.
    But also, since $\max(I) \subseteq Q$, we have $I = \downarrow\!\!\max(I) \subseteq \downarrow\!\! Q$.
    Hence we have both $I \supseteq \downarrow\!\! Q$ and $I \subseteq \downarrow \!\! Q$, and thus $I = \downarrow \!\! Q$.
    This completes the proof.
\end{proof}

The second setting where we will use Lemma~\ref{lemma:support} is that of simplicial complexes; we follow the exposition in Bj\"orner--Wachs~\cite{BjornerWachs}.
An \emph{abstract simplicial complex} $\Delta$ on a vertex set $V$ is a finite collection of subsets of $V$, called \emph{faces}, such that if $B \in \Delta$ and $A \subseteq B$, then $A \in \Delta$.
The maximal faces of~$\Delta$ (with respect to inclusion) are called \emph{facets}, and we write
    \begin{equation}
        \label{F(Delta)}
        \F(\Delta) \coloneqq \{ \text{facets of $\Delta$} \}.
    \end{equation}
We say that $\Delta$ is \emph{shellable} if there exists a linear ordering $F_1, \ldots, F_m$ of the facets in $\F(\Delta)$ such that, for all indices $i < k$, there exists some $j < k$ and a vertex $v \in F_k$ such that
    \begin{equation}
        \label{shelling condition}
        (F_i \cap F_k) \subseteq (F_j \cap F_k) = (F_k \setminus \{v\}).
    \end{equation}
Such an ordering of the facets is called a \emph{shelling order} on $\F(\Delta)$, or just a \emph{shelling} of $\Delta$.
A shelling of $\Delta$ (if one exists) is generally not unique.
Any shelling $F_1, \ldots, F_m$ induces a \emph{restriction map} $\res : \F(\Delta) \longrightarrow \Delta$ given by
    \begin{equation}
        \label{restriction definition}
        \res(F_k) \coloneqq \Big\{ v \in F_k : (F_k \setminus \{v\}) \subseteq F_j \text{ for some $j < k$} \Big\}.
    \end{equation}
The subset $\res(F_k)$ is called the \emph{restriction} of the facet $F_k$.
The following lemma gives a straightforward criterion to check whether an ordering of $\F(\Delta)$, equipped with an explicit map $R$ satisfying~\eqref{restriction definition}, is truly a shelling with restriction map $R$.

\begin{lemma}
    \label{lemma:shelling criterion}
    Let $\Delta$ be an abstract simplicial complex.
    Let $F_1, \ldots, F_m$ be an ordering of the facets of $\Delta$, and let $R: \F(\Delta) \longrightarrow \Delta$ be a map satisfying
    \begin{equation}
        \label{R property}
        R(F_k) = \Big\{ v \in F_k : (F_k \setminus \{v\}) \subseteq F_j \textup{ for some } j < k \Big\} \quad \textup{for all } 1 \leq k \leq m.
    \end{equation}
    If $R(F_k) \not\subseteq F_i$ for all pairs $i<k$, then $F_1, \ldots, F_m$ is a shelling of $\Delta$ with induced restriction map $R$.
\end{lemma}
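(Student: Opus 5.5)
The plan is to verify the shelling condition~\eqref{shelling condition} directly, producing the required $j$ and $v$ from the hypothesis on $R$, and then to check that the restriction map~\eqref{restriction definition} induced by this shelling agrees with $R$.

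Fix indices $i<k$. By hypothesis $R(F_k) \not\subseteq F_i$, so we may choose a vertex $v \in R(F_k)$ with $v \notin F_i$. Since $v \in R(F_k)$, property~\eqref{R property} gives some $j<k$ with $F_k \setminus \{v\} \subseteq F_j$. We then claim that this pair $(j,v)$ witnesses~\eqref{shelling condition}:
\[
(F_i \cap F_k) \subseteq (F_j \cap F_k) = F_k \setminus \{v\}.
\]

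First, $F_i \cap F_k \subseteq F_k \setminus \{v\}$, because $v \notin F_i$. Next, $F_k \setminus \{v\} \subseteq F_j \cap F_k$ holds by the choice of $j$. For the reverse inclusion $F_j \cap F_k \subseteq F_k \setminus \{v\}$, it suffices to show $v \notin F_j$. If instead $v \in F_j$, then $F_k \subseteq F_j$. Since $F_k$ is a facet, this forces $F_k = F_j$, contradicting $j \neq k$, because the facets in the ordering are distinct. This gives the equality $F_j \cap F_k = F_k \setminus \{v\}$, and chaining the inclusions yields~\eqref{shelling condition}. Hence $F_1, \ldots, F_m$ is a shelling.

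Finally, the induced restriction map~\eqref{restriction definition} is defined by exactly the same formula as the right-hand side of~\eqref{R property}. It therefore coincides with $R$ on every facet $F_k$.

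The only step requiring care is the small point that $v \notin F_j$. This follows from maximality of facets together with the distinctness of the $F_j$; everything else is bookkeeping.
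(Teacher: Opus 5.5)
Your proposal is correct and matches the paper's proof step for step. You choose $v \in R(F_k)\setminus F_i$, use \eqref{R property} to obtain $j<k$ with $F_k\setminus\{v\}\subseteq F_j$, and rule out $v\in F_j$ by maximality of facets to get $F_j\cap F_k = F_k\setminus\{v\}$. You then observe that \eqref{restriction definition} and \eqref{R property} are the same formula.
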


\begin{proof}
    Let $1 \leq i < k \leq m$.
    Since $R(F_k) \not\subseteq F_i$, there exists some $v \in \big( R(F_k) \setminus F_i \big)$.
    In particular, since $v \in R(F_k)$, by~\eqref{R property} there exists some $j < k$ such that \begin{equation}
        \label{Fk minus x in Fj}
        (F_k \setminus \{v\}) \subseteq F_j.
    \end{equation}
    Combining the fact that $v \notin F_i$ with~\eqref{Fk minus x in Fj}, we have
    \begin{equation}
        \label{two containnments}
        (F_i \cap F_k) \subseteq (F_k \setminus \{v\}) \subseteq (F_j \cap F_k).
    \end{equation}
    Note that if we were to have  $v \in F_j$, then by~\eqref{Fk minus x in Fj} we would have $F_k \subseteq F_j$, which would force $F_k = F_j$ since facets are maximal with respect to inclusion;
    but this would contradict $j < k$, and hence it must be the case that $v \notin F_j$.
    It follows that
    \begin{equation}
        \label{Fj cap Fk in Fk minus x}
        (F_j \cap F_k) \subseteq (F_k \setminus \{v\}).
    \end{equation}
    Comparing~\eqref{Fj cap Fk in Fk minus x} with the opposite inclusion in~\eqref{two containnments} forces the equality $(F_k \setminus \{v\}) = (F_j \cap F_k)$, and thus~\eqref{two containnments} becomes
    \[
        (F_i \cap F_k) \subseteq (F_j \cap F_k) = (F_k \setminus \{v\}).
    \]
    Thus by~\eqref{shelling condition}, since our choice of $i < k$ was arbitrary,  $F_1, \ldots, F_m$ is a shelling of $\Delta$.
    Consequently, this shelling has an induced restriction map $\res$ given by~\eqref{restriction definition}.
    But the defining formula in~\eqref{restriction definition} is identical to~\eqref{R property}, so $\res(F_k) = R(F_k)$ for all $ 1 \leq k \leq m$.
    Hence $R$ is the induced restriction map, as claimed.
\end{proof}

\begin{lemma}[\cite{BjornerWachs}*{Prop.~2.5}]
    \label{lemma:complex partition}

    Let $\Delta$ be an abstract simplicial complex.
    Given a shelling of $\Delta$ with restriction map $\res$, we have
    \[
        \Delta = \coprod_{F \in \F(\Delta)} [ \res(F), \: F].
    \]
\end{lemma}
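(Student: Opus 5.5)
We must show that every face $G \in \Delta$ lies in exactly one interval $[\res(F), F]$ with $F \in \F(\Delta)$. Fix a shelling $F_1, \ldots, F_m$. For a face $G$, let $k$ be the \emph{smallest} index with $G \subseteq F_k$; such a $k$ exists because every face lies in some facet. The plan is to show $G \in [\res(F_k), F_k]$ (existence), and that $G \notin [\res(F_\ell), F_\ell]$ for every $\ell \neq k$ (uniqueness).

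For existence, $G \subseteq F_k$ holds by the choice of $k$, so it remains to show $\res(F_k) \subseteq G$. Suppose some $v \in \res(F_k)$ is not in $G$. By~\eqref{restriction definition} there is $j<k$ with $F_k \setminus \{v\} \subseteq F_j$. Since $v \notin G$, we get $G \subseteq F_k \setminus \{v\} \subseteq F_j$. This contradicts the minimality of $k$.

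For uniqueness, suppose $G \in [\res(F_\ell), F_\ell]$ with $\ell \neq k$. Since $G \subseteq F_\ell$, minimality forces $k < \ell$. Now apply the shelling condition~\eqref{shelling condition} to the pair $k<\ell$. It yields $j<\ell$ and $v \in F_\ell$ with $F_k \cap F_\ell \subseteq F_j \cap F_\ell = F_\ell \setminus\{v\}$. Then $F_\ell \setminus \{v\} \subseteq F_j$ with $j<\ell$, so $v \in \res(F_\ell) \subseteq G$. On the other hand, $G \subseteq F_k \cap F_\ell \subseteq F_\ell \setminus \{v\}$, so $v \notin G$. This is a contradiction.

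Hence the intervals cover $\Delta$ and are pairwise disjoint. Each interval lies in $\Delta$, since its elements are subsets of a facet. This gives the stated decomposition.

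The only delicate point is uniqueness. There one must use the full shelling condition, not merely the definition of restriction, to produce a vertex of $\res(F_\ell)$ that is missing from $F_k$.
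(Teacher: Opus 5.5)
Your proof is correct: assigning each face $G$ to the first facet $F_k$ containing it, the existence step needs only the definition of $\res$, and the uniqueness step correctly uses the shelling condition to produce a vertex $v \in \res(F_\ell) \setminus F_k$ whenever $k < \ell$. The paper does not prove this lemma itself but cites Bj\"orner--Wachs, and your argument is essentially the standard proof there: the faces contained in $F_k$ but in no earlier facet are exactly the interval $[\res(F_k), F_k]$.
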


\begin{lemma}
    \label{lemma:section}
    Let $S$ be a finitely generated commutative monoid.
    Let $X$ be a set, and $\iota : X \longrightarrow S$ an injective map such that $\iota(X)$ generates $S$.
    Let $\pi$ be the canonical extension of $\iota$ given by
    \begin{align*}
        \pi : \N^X & \longrightarrow S, \\
        (n_x)_{x \in X} & \longmapsto \sum_{x \in X} n_x \iota(x),
    \end{align*}
    a surjective homomorphism of monoids.
    Let $\sigma : S \longrightarrow \N^X$ be a section of $\pi$, that is, $\pi \circ \sigma = \operatorname{id}_S$.
    If
    \[
        \sigma(S) = \coprod_{i} \: (a_i + \N B_i)
    \]
    is a Stanley decomposition of $\sigma(S) \subseteq \N^X$, then
    \[
        S = \coprod_{i} \: (\pi(a_i) + \N \pi(B_i))
    \]
    is a Stanley decomposition of $S$.
    
\end{lemma}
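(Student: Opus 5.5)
The plan is to transport the decomposition of $\sigma(S)$ along $\pi$, using that $\pi$ restricted to $\sigma(S)$ is a bijection onto $S$. There are three things to check: the pieces $\pi(a_i) + \N\pi(B_i)$ cover $S$, they are pairwise disjoint, and each $\pi(B_i)$ is linearly independent, so that $\N\pi(B_i)$ is free with basis $\pi(B_i)$ and $|\pi(B_i)| = |B_i|$.

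First I record the basic facts about $\sigma$. Since $\pi\circ\sigma = \operatorname{id}_S$, the map $\sigma$ is injective and $\pi|_{\sigma(S)} : \sigma(S) \to S$ is a bijection with inverse $\sigma$. Applying $\pi$ to the given decomposition, and using that $\pi$ is a monoid homomorphism, gives
\[
\pi\bigl(a_i + \N B_i\bigr) = \pi(a_i) + \N\pi(B_i).
\]
Covering then follows from $S = \pi(\sigma(S))$.

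Disjointness follows from injectivity of $\pi$ on $\sigma(S)$. The pieces $a_i + \N B_i$ are disjoint subsets of $\sigma(S)$, so their images under $\pi$ are disjoint.

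Linear independence is the main obstacle. I would argue via uniqueness of representation. Suppose two elements of $\N B_i$, namely $u = \sum_b m_b b$ and $u' = \sum_b m'_b b$, have the same image under $\pi$. Both $a_i + u$ and $a_i + u'$ lie in $\sigma(S)$, and they have the same image under $\pi$, so injectivity of $\pi$ on $\sigma(S)$ gives $a_i + u = a_i + u'$. Cancellation in $\N^X$ gives $u = u'$, and independence of $B_i$ in $\N^X$ gives $m = m'$. Hence $\pi$ is injective on $\N B_i$ and in particular on $B_i$, so $|\pi(B_i)| = |B_i|$. Moreover $\N B_i \cong \N^{B_i} \to \N\pi(B_i)$ is an isomorphism, which is exactly the freeness of $\N\pi(B_i)$ that a Stanley decomposition requires (the paper uses linear independence in this sense).

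If linear independence is required over $\mathbb{Z}$ inside the group of $S$, I would add the same remark with differences. Here I expect to rely on the paper's convention that independence means $\N\pi(B_i)$ is free on $\pi(B_i)$, which is what is used in \eqref{Hilb k[S] with Stanley} and Lemma~\ref{lemma:Stanley decomp gives Kdim and e}. Finally, each $\pi(a_i)$ lies in $S$, which completes the verification.
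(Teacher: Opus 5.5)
Your proposal is correct and follows the same route as the paper. The paper also argues that $\pi$ restricts to a bijection $\sigma(S)\to S$ with inverse $\sigma$, that the homomorphism property gives $\pi(a_i+\N B_i)=\pi(a_i)+\N\pi(B_i)$, and that independence of $\pi(B_i)$ comes from injectivity of $\pi$ on $\sigma(S)$. Your explicit argument for that last step, comparing $a_i+u$ and $a_i+u'$ inside $\sigma(S)$ and cancelling in $\N^X$, fills in exactly what the paper's one-line justification leaves implicit. It also gives $\mathbb{Z}$-independence whenever $S$ sits in a group, since you can split any integer relation into its positive and negative parts.
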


\begin{proof}
    Since $\pi \circ \sigma = \operatorname{id}_S$, the map $\pi$ restricts to a bijection $\sigma(S) \longrightarrow S$ with inverse $\sigma$; hence the images under $\pi$ of the Stanley spaces in $\sigma(S)$ still partition $S$.
    Moreover, since $\pi$ is a homomorphism, $\pi(a_i + \N B_i) = \pi(a_i) + \N \pi(B_i)$.
    Finally, the independence of $\pi(B_i)$ in $S$ follows from the fact that $\pi$ is injective on $\sigma(S)$.
\end{proof}

\section{Osculating pairs of partitions and their associated complex}
\label{sec:combinatorics}

In this section, we build up the combinatorial model underlying the Stanley decomposition we will construct in Section~\ref{sec:Stanley decomp}.
The main characters are what we call \emph{osculating pairs of partitions} $(\al,\be)$, and an associated shellable complex $\Delta(\al,\be)$.
In reading this section, the reader may find it helpful occasionally to glance ahead to the comprehensive Example~\ref{ex:big example} on page~\pageref{ex:big example}, where we illustrate our constructions in full detail for a given pair $(\al, \be)$.

The natural habitat for our constructions will be the set $[p] \times [q]$, which we will always depict as a rectangular grid of points with matrix coordinates (i.e., with the point $(i,j)$ in the $i$th row from the top and the $j$th column from the left).
When we refer to \emph{adjacency} and \emph{connectedness} in $[p] \times [q]$, we are viewing $[p] \times [q]$ as the grid graph wherein two points are adjacent if and only if they differ by $\pm( 1,0)$ or $\pm(0, 1)$. 

Recall that an \emph{integer partition} (or just a \emph{partition}) is a (possibly empty) finite sequence $\al = (\al_1, \ldots, \al_\ell)$ of weakly decreasing positive integers.
Equivalently --- and far more usefully in this paper --- one can view a partition $\al = (\al_1, \ldots, \al_\ell)$ as the set
\begin{equation}
    \label{partition as set}
    \al = \Big\{ (i,j) \in (\mathbb{Z}_{>0})^2 : j \leq \al_i \Big\},
\end{equation}
where $\al_i \coloneqq 0$ for all $i > \ell$.
Thus $|\al| = \sum_i \al_i$.
This viewpoint~\eqref{partition as set} is essentially the \emph{Young diagram} of $\al$, which is ubiquitous in algebraic combinatorics.
In this paper, we will depict a partition $\al$ by embedding the set~\eqref{partition as set} into either the northeast or southwest corner of $[p] \times [q]$, as described in part~(\ref{osculating}) of the following definition.

\begin{dfn}
    \label{def:Parpq and Opq}
    Fix positive integers $p$ and $q$.
    \begin{enumerate}
        \item \label{corners} Let
        \[
            \Par_{pq} \coloneqq \Big\{ \text{partitions }\al \subseteq [p-1] \times [q-1]
            \Big\}.
        \]
        Given $\al \in \Par_{pq}$, define the subset
        \begin{align*}
            \cor(\al) &\coloneqq \Big\{ (i,j) \in \al : (i+1,j) \notin \al \text{ and } (i,j+1) \notin \al \Big\} \\
            & = \Big\{ (i,j) \in \al : (\al \setminus \{(i,j)\} ) \in \Par_{pq} \Big\}.
        \end{align*}
        (The notation is due to the fact that such elements are typically called the \emph{corners} of $\al$.)

    \item \label{poset interp} 
        Equivalently, one can view $[p-1] \times [q-1]$ as a poset with respect to the product order, whereby $(i,j) \leq (i', j')$ if and only if $i \leq i'$ and $j \leq j'$.
        Then in the notation from~\eqref{I(P)}--\eqref{max(I)},
        \[
            \Par_{pq} = \mathcal{I}\Big( [p-1] \times [q-1] \Big), \quad \text{and} \quad \cor(\al) = \max(\al).
        \]

    \item \label{frame diagram}
    Define the following reflections of $[p] \times [q]$:
    \begin{align*}
        \NE{i,j} & \coloneqq (i, \: q-j+1), \\
        \SW{i,j} &\coloneqq (p -i+1, \: j).
    \end{align*}
    To each pair $(\al,\be) \in \Par_{pq} \times \Par_{pq}$ we associate the \emph{frame}
    \[
        \Fr(\al,\be) \coloneqq \NE{\al} \cup \SW{\be},
    \]
    which we depict by shading each of its elements with a unit square inside $[p] \times [q]$.
    (The term evokes the way a film director, say,  makes a frame in the air with thumbs and index fingers at two opposite corners of an imaginary screen.)
    
  \item \label{osculating} A pair $(\al, \be) \in \Par_{pq} \times \Par_{pq}$ is said to be \emph{osculating} if $\NE{\al}$ and $\SW{\be}$ are not adjacent to each other.
  (Equivalently, the outlines of $\NE{\al}$ and $\SW{\be}$ in $\Fr(\al,\be)$ form two osculating lattice paths in the usual sense.)
    Let
        \[
            \O_{pq} \coloneqq 
            \Big\{ \text{osculating pairs $(\al, \be) \in \Par_{pq} \times \Par_{pq}$}  \Big\}.
        \]
        
        \item \label{Delta(alpha,beta)} Let $(\al, \be) \in \O_{pq}$.
        For each $j \in [q]$, define the interval
        \[
            N_j \coloneqq N_j(\al, \be) =  \Big\{ i \in [p] : (i,j) \notin \Fr(\al,\be) \Big\}.
        \]
        Then define 
        \[
            \Delta(\al,\be) \coloneqq \Big\{ C \subseteq [p] : N_j \not\subseteq C \textup{ for all } j \in [q] \Big\},
        \]
        that is, the largest abstract simplicial complex on $[p]$ for which the $N_j$'s are nonfaces.
        We will abbreviate its set of facets by
        \[
            \F(\al,\be) \coloneqq \F(\Delta(\al,\be)).
        \]

        \item \label{d definition} Given $(\al, \be) \in \O_{pq}$, define the statistic
        \[
            \d(\al, \be)
            \coloneqq \sum_{\mathclap{\substack{(i,j) \\ \in \cor(\al)}}} ij + \sum_{\mathclap{\substack{(i,j) \\ \in \cor(\be)}}} ij.
        \]
        (The ``$\d$'' is meant to evoke ``degree,'' and is motivated by~\eqref{numerator degree} below.)
    \end{enumerate}

\end{dfn}

As we will show in Lemma~\ref{lemma:F and R}, the following construction gives an explicit shelling $F_1, \ldots, F_m$ of $\Delta(\al,\be)$ along with its restriction map $R$.
The construction begins by picking out the set $\NN(\al,\be)$ of nonfaces of $\Delta(\al,\be)$ which are minimal with respect to inclusion.
We illustrate Construction~\ref{const:F and R} immediately afterward in Example~\ref{ex:big example}.

\begin{construction}
    \label{const:F and R}
    Let $(\al,\be) \in \O_{pq}$.
    First, we define
    \[
        \NN(\al,\be) \coloneqq \{ N_{j_1}, \ldots, N_{j_\ell} \},
    \]
    where the intervals $N_j = N_j(\al,\be)$ are from Definition~\ref{def:Parpq and Opq}(\ref{Delta(alpha,beta)}), and where $j_1, \ldots, j_\ell \in [q]$ are determined recursively as follows.
    An \emph{$\al$-column} is an index $j$ such that $\NE{\cor(\al)}$ contains some $(i,j)$;
    likewise, a \emph{$\be$-column} is an index $j$ such that $\SW{\cor(\be)}$ contains some $(i,j)$, and we also declare $q$ to be a $\be$-column.
    Set $j_1$ equal to the smallest $\be$-column.
    For $i \geq 1$, let $j'_{i+1}$ be the smallest $\al$-column greater than $j_i$, if it exists; then $j_{i+1}$ is the smallest $\be$-column greater than or equal to $j'_{i+1}$.
    Otherwise, the process terminates and $\ell = i$.
    Having thus picked out the indices $j_{1} < \cdots < j_{\ell}$, write
    \begin{equation}
        \label{ai bi}
        N_{j_i} = [a_i, b_i].
    \end{equation}
    Now the following algorithm enumerates certain pairs $(F_k, \:R(F_k))$ such that $R(F_k) \subseteq F_k \subseteq [p]$.
Note that Step~\ref{choice step} is nondeterministic, with each choice of $x_t$ spawning a branch.
Each $F_k$ is built from $[p]$ by removing a sequence of elements $x_1 <  x_2 < \cdots$ (depending on the branch). We first describe the construction of a typical $(F,R(F))$, as a sequence of states $(F^{(t)}, R(F^{(t)}))$ indexed by $t=1,2,3,\ldots$.

\begin{enumerate}[label=\arabic*.,ref=\arabic*]
\setcounter{enumi}{-1}

    \item Initialize $F^{(0)} \coloneqq [p]$ and  $R(F^{(0)})\coloneqq \emptyset$, and set $x_0 \coloneqq 0$ and $b_{i_0} \coloneqq 0$.
    \item\label{recursive step} If $a_i>x_{t-1}$ for some $i$, let $i_t$ be the smallest such index and proceed to Step~\ref{choice step}. 
    Otherwise, output $(F,R(F)) \coloneqq (F^{(t-1)}, \: R(F^{(t-1)}))$ and terminate this branch.
    \item\label{choice step} Choose $x_t \in \left( b_{i_{t-1}}, \:b_{i_t} \right]$.
    \item \label{update step} Set
    \[
        F^{(t)} \coloneqq F^{(t-1)}\setminus\{x_t\},\qquad R(F^{(t)}) \coloneqq R(F^{(t-1)}) \cup \left(x_t, \: b_{i_t} \right],
    \]
    increment $t$, and return to Step~\ref{recursive step}.
\end{enumerate}
Branches are explored depth-first, with priority given to larger values of $x_t$ among the choices available at a given step. 
Enumerating the $(F,R(F))$'s in the order produced by this depth-first search gives a fixed order $(F_1,R(F_1)), \ldots, (F_m, R(F_m))$.
(In fact, the $F_k$'s are ordered lexicographically.)
    
\end{construction}

\begin{example}
    \label{ex:big example}
    Let $p = 10$ and $q = 15$.
    Let
    \[
        \al = (13,9,6,4,2,2,2,1), 
        \qquad
        \be = (12,11,11,8,6,5,3,1).
    \]
    Note that $\al,\be \in \Par_{pq}$, since in each partition the number of parts is less than $p=10$, and the first part is less than $q = 15$.
    To determine whether the pair $(\al,\be)$ belongs to $\O_{pq}$, we depict its corresponding \emph{frame} as in Definition~\ref{def:Parpq and Opq}(\ref{frame diagram}) (we also draw circles to indicate the elements of $\NE{\cor(\al)}$ and $\SW{\cor(\be)}$ for later reference):
    \begin{equation}
        \label{frame in example}
        \begin{tikzpicture}[scale=.3, baseline=(current bounding box.center)]
    
    \fill[lightgray] (15.5,10.5) --++ (0,-8) -- ++ (-1,0) -- ++ (0,1) -- ++(-1,0) -- ++(0,3) -- ++(-2,0) -- ++(0,1) -- ++(-2,0) -- ++(0,1) -- ++(-3,0) -- ++(0,1) -- ++(-4,0) -- ++(0,1) -- cycle;

    \foreach \x/\y in {3/10,7/9,10/8,12/7,14/4,15/3} {
    \node at (\x,\y) [corner] {};
    }

    \fill[lightgray] (0.5,0.5) --++ (0,8) -- ++ (1,0) -- ++ (0,-1) -- ++(2,0) -- ++(0,-1) -- ++(2,0) -- ++(0,-1) -- ++(1,0) -- ++(0,-1) -- ++(2,0) -- ++(0,-1) -- ++(3,0) -- ++(0,-2) -- ++(1,0) -- ++(0,-1) -- cycle;

    \foreach \x/\y in {1/8,3/7,5/6,6/5,8/4,11/3,12/1} {
    \node at (\x,\y) [corner] {};
    }
    
    \foreach \x in {1,...,10}{\foreach \y in {1,...,15}{\node [dot] at (\y,\x) {};}}

    \node [left] at (0,5.5) {$\Fr(\al,\be) \; = $};
    
\end{tikzpicture}
    \end{equation}

    \noindent Since the two shaded regions $\NE{\al}$ and $\SW{\be}$ do not overlap or share any edges, we indeed have $(\al,\be) \in \O_{pq}$.
    To compute the statistic $\d(\al,\be)$ in Definition~\ref{def:Parpq and Opq}(\ref{d definition}), we observe that for each $(i,j) \in \cor(\al)$, the quantity $ij$ is simply the number of points in the rectangular region weakly northeast of $\NE{i,j}$;
    the same is true upon replacing $\al$ by $\be$, and ``northeast''/NE by ``southwest''/SW.
    Hence $\d(\al,\be)$ is the sum of these values $ij$ shown below:
    \[
        \d(\al,\be) = 
        \sum 
        \left(
        \begin{tikzpicture}[scale=.3, baseline=(current bounding box.center)]
    
    \fill[lightgray!30] (15.5,10.5) --++ (0,-8) -- ++ (-1,0) -- ++ (0,1) -- ++(-1,0) -- ++(0,3) -- ++(-2,0) -- ++(0,1) -- ++(-2,0) -- ++(0,1) -- ++(-3,0) -- ++(0,1) -- ++(-4,0) -- ++(0,1) -- cycle;

    \fill[lightgray!30] (0.5,0.5) --++ (0,8) -- ++ (1,0) -- ++ (0,-1) -- ++(2,0) -- ++(0,-1) -- ++(2,0) -- ++(0,-1) -- ++(1,0) -- ++(0,-1) -- ++(2,0) -- ++(0,-1) -- ++(3,0) -- ++(0,-2) -- ++(1,0) -- ++(0,-1) -- cycle;
    
    \foreach \x in {1,...,10}{\foreach \y in {1,...,15}{\node [dot] at (\y,\x) {};}}

    \foreach \x/\y in {3/10,7/9,10/8,12/7,14/4,15/3} {
    \ijBox{\x}{\y}
    }

    \node [scale=.6] at (3,10) {13};
    \node [scale=.6] at (7,9) {18};
    \node [scale=.6] at (10,8) {18};
    \node [scale=.6] at (12,7) {16};
    \node [scale=.6] at (14,4) {14};
    \node [scale=.6] at (15,3) {8};

    \foreach \x/\y in {1/8,3/7,5/6,6/5,8/4,11/3,12/1} {
    \ijBox{\x}{\y}
    }

    \node [scale=.6] at (1,8) {8};
    \node [scale=.6] at (3,7) {21};
    \node [scale=.6] at (5,6) {30};
    \node [scale=.6] at (6,5) {30};
    \node [scale=.6] at (8,4) {32};
    \node [scale=.6] at (11,3) {33};
    \node [scale=.6] at (12,1) {12};

    \node at (1,0.25) {};
    
\end{tikzpicture}
        \right) = 253.
    \]
    Finally, we will obtain a shelling of $\Delta(\al,\be)$ by implementing Construction~\ref{const:F and R}.
    To begin, we inspect the circled points in~\eqref{frame in example} and determine the $j_i$'s as follows.
    The $\al$-columns are 3, 7, 10, 12, 14, and 15;
    the $\be$-columns are 1, 3, 5, 6, 8, 11, 12, and 15 (recall that $q$ is always considered to be a $\be$-column).
    As always, $j_1$ is the smallest $\be$-column, which in this case gives $j_1 = 1$.
    Starting \emph{after} this column, we sweep toward the right until we arrive at the next $\al$-column $j'_2 = 3$;
    then starting \emph{in} this column, we sweep toward the right until we arrive at the next $\be$-column $j_2 = 3$.
    Repeating this process, we obtain the first two rows in the following table, from which the $j_i$'s yield $\NN(\al,\be)$:
    \[
        \begin{array}{c|cccccc}
            i & 1 & 2 & 3 & 4 & 5 & 6 \\ \hline
            j'_i & - & 3 & 7 & 10 & 12 & 14 \\ 
            j_i & 1 & 3 & 8 & 11 & 12 & 15 \\ \hline
            a_i & 1 & 2 & 3 & 4 & 5 & 9 \\
            b_i & 2 & 3 & 6 & 7 & 9 & 10
        \end{array}
        \qquad \leadsto \qquad
        \NN(\al,\be) = \{N_1, N_3, N_8, N_{11}, N_{12}, N_{15} \}.
    \]
    The last two rows follow immediately from~\eqref{ai bi}; for example, at $i=3$, we have $j_3 = 8$, and thus $N_{j_3} = N_8 = [3,6] = [a_3, b_3]$.
    (In words: look at column 8 in~\eqref{frame in example} and find the interval of unshaded row indices, which are rows 3 through 6.
    Thus $N_8 = [3,6]$.)
    Finally, to obtain the sequence of $(F_k, R(F_k))$'s, we implement the depth-first algorithm in Construction~\ref{const:F and R}, which we illustrate below:
    \[
    \ytableausetup{smalltableaux,centertableaux}
    \arraycolsep=1.3pt
        \begin{array}{c|rp{2ex}p{2ex}p{2ex}p{2ex}p{2ex}p{2ex}p{2ex}c|rcp{2.5ex}p{2.5ex}p{2.5ex}p{2.5ex}p{2.5ex}p{2.5ex}p{2.5ex}p{2.5ex}p{2.5ex}p{2.5ex}p{2.5ex}p{2.5ex}p{2.5ex}p{2.5ex}p{2.5ex}p{2.5ex}p{2.5ex}p{2.5ex}}
          \quad & \quad i: & & 1 & 2 & 3 & 4 & 5 & 6 & & \quad k: & & 1 & 2 & 3 & 4 & 5 & 6 & 7 & 8 & 9 & 10 & 11 & 12 & 13 & 14 & 15 & 16 & 17 \\ \hline
           \begin{ytableau}

           \none \\
           \none[\scriptstyle1] \\ 
           \none[\scriptstyle2] \\
           \none[\scriptstyle3] \\
           \none[\scriptstyle4] \\
           \none[\scriptstyle5] \\
           \none[\scriptstyle6] \\
           \none[\scriptstyle7] \\
           \none[\scriptstyle8] \\
           \none[\scriptstyle9] \\
           \none[\scriptstyle10] 
           \end{ytableau}
           & \quad [a_i, b_i]: & & 
            \ydiagram{0,1,1,0,0,0,0,0,0,0,0} & 
            \ydiagram{0,0,1,1,0,0,0,0,0,0,0} & 
            \ydiagram{0,0,0,1,1,1,1,0,0,0,0} & 
            \ydiagram{0,0,0,0,1,1,1,1,0,0,0} & 
            \ydiagram{0,0,0,0,0,1,1,1,1,1,0} & 
            \ydiagram{0,0,0,0,0,0,0,0,0,1,1} & & \begin{array}{r} 
            F_k: \\
            \\
            \;\; \textcolor{gray}{R(F_k):}
            \end{array}
            &  &
            
            \ytableaushort{\none,{},\times,{},{},{},\times,{},{},{},\times} &
            \ytableaushort{\none,{},\times,{},{},{},\times,{},{},\times,{\rshade}} &
            \ytableaushort{\none,{},\times,{},{},\times,{\rshade},{},{},{},\times} &
            \ytableaushort{\none,{},\times,{},{},\times,{\rshade},{},{},\times,{\rshade}} &
            \ytableaushort{\none,{},\times,{},\times,{\rshade},{\rshade},{},{},\times,{}} &
            \ytableaushort{\none,{},\times,{},\times,{\rshade},{\rshade},{},\times,{\rshade},\times} &
            \ytableaushort{\none,{},\times,{},\times,{\rshade},{\rshade},\times,{\rshade},{\rshade},\times} &
            \ytableaushort{\none,{},\times,\times,{\rshade},{\rshade},{\rshade},\times,{},{},\times} &
            \ytableaushort{\none,{},\times,\times,{\rshade},{\rshade},{\rshade},\times,{},\times,{\rshade}} &
            \ytableaushort{\none,\times,{\rshade},\times,{},{},{},\times,{},{},\times} &
            \ytableaushort{\none,\times,{\rshade},\times,{},{},{},\times,{},\times,{\rshade}} &
            \ytableaushort{\none,\times,{\rshade},\times,{},{},\times,{\rshade},{},{},\times} &
            \ytableaushort{\none,\times,{\rshade},\times,{},{},\times,{\rshade},{},\times,{\rshade}} &
            \ytableaushort{\none,\times,{\rshade},\times,{},\times,{\rshade},{\rshade},{},{},\times} &
            \ytableaushort{\none,\times,{\rshade},\times,{},\times,{\rshade},{\rshade},{},\times,{\rshade}} &
            \ytableaushort{\none,\times,{\rshade},\times,\times,{\rshade},{\rshade},{\rshade},{},\times,{}} &
            \ytableaushort{\none,\times,{\rshade},\times,\times,{\rshade},{\rshade},{\rshade},\times,{\rshade},\times} &
        \end{array}
    \]
    
    \noindent Each column represents the subset of $[p]$ consisting of the boxes in that column, where boxes marked ``$\times$'' are omitted.
    In particular, for each $k$, the ``$\times$'' symbols are precisely the $x_t$'s chosen in Step~\ref{choice step}, which are deleted in Step~\ref{update step}.
    The shaded boxes in each $F_k$ represent the subset $R(F_k) \subseteq F_k$;
    in particular, the (possibly empty) strip of consecutive shaded boxes beneath each $x_t$ equals the interval $(x_t, b_{i_t}]$ appended to $R(F^{(t-1)})$ in Step~\ref{update step}.
    This concludes Construction~\ref{const:F and R}.
    Explicitly,
    \begin{align*}
        F_1 &= \{1,3,4,5,7,8,9\}, & R(F_1) &= \emptyset,\\
        F_2 &= \{1,3,4,5,7,8,10\}, & R(F_2) &= \{10\},\\
        F_3 &= \{1,3,4,6,7,8,9\}, & R(F_3) &= \{6\}, \\
        F_4 &= \{1,3,4,6,7,8,10\}, & R(F_4) &= \{6,10\}, \\
        & \vdots & & \vdots \\
        F_{17} &= \{2,5,6,7,9\}, & R(F_{17}) &= \{2,5,6,7,9\}.
    \end{align*}
\end{example}

\begin{lemma}
\label{lemma:F and R}
    
    Let $(\al, \be) \in \O_{pq}$.
    We have
    \[
        \F(\al,\be) = \{F_1, \ldots, F_m \},
    \]
    where the $F_k$'s are given by Construction~\ref{const:F and R}.
    Moreover, $F_1, \ldots, F_m$ is a shelling of $\Delta(\al, \be)$ with restriction map $R$, that is,
    \[
        \res(F_k) = R(F_k).
    \]       
\end{lemma}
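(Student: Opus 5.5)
The approach is to translate $\Delta(\al,\be)$ into a hitting-set problem for a family of nested-free intervals. We then show that Construction~\ref{const:F and R} enumerates all minimal hitting sets in lexicographic order, and finally verify the hypotheses of Lemma~\ref{lemma:shelling criterion}.

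\textbf{Step 1 (the minimal nonfaces).} Because $\NE{\al}$ is a partition shape anchored at the top right, the shaded part of column $j$ coming from $\al$ is an initial segment of rows whose length weakly increases with $j$. Because $\SW{\be}$ is anchored at the bottom left, the shaded part coming from $\be$ is a final segment of rows whose length weakly decreases with $j$. Osculation guarantees that each $N_j$ is a nonempty interval. Writing $N_j=[a(j),b(j)]$, both endpoints are therefore weakly increasing in $j$.

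The endpoint $a(j)$ jumps exactly just after an $\al$-column, and $b(j)$ jumps exactly just after a $\be$-column, since corners of $\al$ and $\be$ mark the columns where the outlines step. From this I will show that the inclusion-minimal members of $\{N_j\}$ are exactly $N_{j_1},\dots,N_{j_\ell}$. The reason is that within a stretch of columns with constant $a$, the smallest interval is the one taken at the first column where $b$ is about to increase, which is the first $\be$-column. This is precisely the sweep producing the $j_i$.

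It then follows that $a_1<\cdots<a_\ell$ and $b_1<\cdots<b_\ell$. Since $\Delta(\al,\be)$ is the complex whose nonfaces are generated by these intervals, its facets are the complements $F=[p]\setminus T$ of the minimal transversals $T$ of $\NN(\al,\be)$.

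\textbf{Step 2 (minimal transversals $=$ the construction's outputs).} For intervals with strictly increasing endpoints, I will show that a set $T=\{x_1<x_2<\cdots\}$ is a minimal transversal if and only if it arises from the branching rule. At each stage, $i_t$ is the first interval not yet hit (the first with $a_i>x_{t-1}$), and $x_t$ must lie in $(b_{i_{t-1}},b_{i_t}]$.

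\begin{itemize}
\item The bound $x_t\le b_{i_t}$ is forced, because interval $i_t$ must be hit and no later $x$ can do so.
\item The bound $x_t>b_{i_{t-1}}$ is forced by minimality. If $x_t\le b_{i_{t-1}}$, then $x_{t-1}$ would be redundant, since every interval hit by $x_{t-1}$ is hit either by earlier elements or by $x_t$; this uses the monotonicity of the endpoints.
\item Conversely, each such choice gives a transversal, and each $x_t$ is the unique element hitting $N_{j_{i_t}}$. This is because $x_{t-1}<a_{i_t}$ while $x_{t+1}>b_{i_t}$.
\end{itemize}

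Distinct branches give distinct sets. Hence $\F(\al,\be)=\{F_1,\dots,F_m\}$, and the depth-first order, with larger $x_t$ explored first, is lexicographic on the $F_k$.

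\textbf{Step 3 (shelling via Lemma~\ref{lemma:shelling criterion}).} Two things must be checked: property~\eqref{R property}, and $R(F_k)\not\subseteq F_i$ for $i<k$.

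For the second, let $F_i$ come before $F_k$. The two branches first differ at some step $t$ with $x'_t>x_t$ and both in $(b_{i_{t-1}},b_{i_t}]$ (the prefixes agree, so $i_t$ is the same). Then $x'_t\in(x_t,b_{i_t}]\subseteq R(F_k)$, but $x'_t\notin F_i$.

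For~\eqref{R property}, there are two inclusions.
\begin{itemize}
\item If $v\in(x_t,b_{i_t}]$ for some $t$, then replacing $x_t$ by $v$ still gives a transversal, since $v$ hits every interval that only $x_t$ hit, by monotonicity. That transversal contains a minimal one whose branch agrees with $F_k$'s before step $t$ and has a larger entry at step $t$, so it is an earlier facet $F_j\supseteq F_k\setminus\{v\}$.
\item Conversely, suppose $F_k\setminus\{v\}\subseteq F_j$ with $j<k$. Then $T_j\subseteq T_k\cup\{v\}$, and $T_j$ is a minimal transversal distinct from $T_k$, so it must use $v$ and omit some $x_t$. Comparing the first position at which the branches differ, and using that $T_j$ is lexicographically earlier, forces $x_t<v\le b_{i_t}$.
\end{itemize}

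\textbf{Main obstacle.} I expect this last converse to be the most delicate point. It requires tracking how the index sequence $i_t$ of the earlier branch can change after the substitution. Step~1's identification of $\NN(\al,\be)$ from corner data is the other fiddly point, and I would handle it by a careful column-by-column sweep.
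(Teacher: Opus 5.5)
Your overall route is the paper's: identify $\NN(\al,\be)$ by a column sweep, take facets to be complements of minimal transversals of $N_{j_1},\dots,N_{j_\ell}$, and check the two hypotheses of Lemma~\ref{lemma:shelling criterion} using the depth-first order. Your Step~3 is more careful than the paper's argument. However, Step~2 contains a false claim: ``each such choice gives a transversal.'' A choice $x_t\in(b_{i_{t-1}},b_{i_t}]$ need not lie in $N_{j_{i_t}}$, because nothing forces $a_{i_t}\le b_{i_{t-1}}+1$. Your justification ($x_{t-1}<a_{i_t}$ and $x_{t+1}>b_{i_t}$) only shows that $x_t$ is the unique element that \emph{could} hit $N_{j_{i_t}}$, not that it does. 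Two examples:
\begin{itemize}
\item For $p=q=3$, $\al=(1)$, $\be=\emptyset$, one has $\NN(\al,\be)=\{N_3\}=\{[2,3]\}$, yet the rule allows $x_1=1\in(0,3]$.
\item For $p=q=5$, $\al=(1,1,1)$, $\be=(3,2,1)$, one has $\NN(\al,\be)=\{[1,2],[4,5]\}$, yet the rule allows $x_2=3$.
\end{itemize}
The repair is short. If $x_t<a_{i_t}$, then every $i<i_t$ has $a_i\le x_{t-1}<x_t$, so $i_{t+1}=i_t$. The next choice set is then $(b_{i_t},b_{i_t}]=\emptyset$, and the branch dies without output. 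If instead $x_t\ge a_{i_t}$, then $i_{t+1}>i_t$ and $b_{i_{t+1}}$ is always an admissible next choice, so every such branch terminates. Hence the outputs are exactly the sequences with $x_t\in[\max(a_{i_t},b_{i_{t-1}}+1),\,b_{i_t}]$. For these, your converse and all of Step~3 go through, since each $v\in(x_t,b_{i_t}]$ is itself an admissible choice at step $t$. The paper's proof asserts this step without addressing the issue (``this follows immediately from Steps 1--2''). The facet lists in Appendix~\ref{appendix:3 by 3} agree with the corrected reading.

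Your Step~1 heuristic also needs fixing before the sweep works. The left endpoint jumps \emph{at} an $\al$-column, not after it: $a(j)>a(j-1)$ iff $j$ is an $\al$-column. By contrast, $b(j+1)>b(j)$ iff $j<q$ is a $\be$-column. Moreover, a stretch of constant $a$ that contains no $\be$-column contributes no minimal interval, because its intervals strictly contain the first interval of the next stretch. This is why $j_{i+1}$ is the first $\be$-column $\ge j'_{i+1}$ even when that skips further $\al$-columns (e.g.\ $j'_6=14$, $j_6=15$ in Example~\ref{ex:big example}). Finally, the converse you flag as the main obstacle is easy. Let $s$ be the first position where the sorted lists of $T_j$ and $T_k$ differ. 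The two branches share $i_s$, so the $s$-th element $y$ of $T_j$ satisfies $x_s<y\le b_{i_s}<x_{s+1}$. Since $y\in T_k\cup\{v\}$ but $y\notin T_k$, you get $y=v\in(x_s,b_{i_s}]$, with no tracking of the later index sequence needed.
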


\begin{proof}
    First we claim that $\NN(\al\,\be)$ is the set of minimal nonfaces of $\Delta(\al,\be)$, where a nonface $N$ is \emph{minimal} if every proper subset of $N$ belongs to $\Delta(\al,\be)$.
    That is, we claim that
    \begin{equation}
        \label{claim N minimal nonfaces}
        \text{$N_{j_1}, \ldots, N_{j_\ell}$ are all distinct, and are precisely the minimal nonfaces of $\Delta(\al,\be)$}.
    \end{equation}
    In Definition~\ref{def:Parpq and Opq}(\ref{Delta(alpha,beta)}), $\Delta(\al,\be)$ is defined to be the largest abstract simplicial complex on $[p]$ such that the intervals $N_1, \ldots, N_q$ are nonfaces;
    therefore to prove~\eqref{claim N minimal nonfaces}, it suffices to show that for all $j \in [q]$,
    \begin{equation}
        \label{nonminimal nonfaces excluded}
        j \notin \{j_1, \ldots, j_\ell\} \text{ if and only if $N_{j_i} \subseteq N_j$ for some $j_i \neq j$}.
    \end{equation}
    Consider such $j$'s while implementing Construction~\ref{const:F and R}:
    to begin, since $j_1$ is the first $\be$-column, we confirm that for all $j < j_1$, we have $N_{j_1} \subseteq N_j$, because $\min N_{j_1} \leq \min N_{j}$ while $\max N_{j_1} = \max N_j$.
    Similarly, at each stage of the construction:
    for all $j_i < j < j'_{i+1}$, we have $N_{j_i} \subset N_j$, because $\min N_{j_i} = \min N_j$ while $\max N_{j_i} < \max N_j$.
    Likewise, for all $j'_{i+1} \leq j < j_{i+1}$, we have $N_{j+1} \subseteq N_j$ because $\min N_{j+1} \geq \min N_j$ while $\max N_{j+1} = \max N_j$.
    Finally, for all $j > j_\ell$, it must be the case that $j$ is not an $\al$-column, and therefore we have $N_{j_\ell} \subseteq N_j$ because $\min N_{j_\ell} = \min N_j$ while $\max N_{j_\ell} < \max N_j$.
    This establishes that if $j \notin \{j_1, \ldots, j_\ell\}$, then $N_{j_i} \subseteq N_j$ for some $j_i \neq j$.
    Inversely, it is clear from the construction that the start points and endpoints of the $N_{j_i}$'s strictly increase as $i$ increases from $1$ to $\ell$, so none of the $N_{j_i}$'s contains any other.
    We have thus proved~\eqref{nonminimal nonfaces excluded}, which establishes the claim~\eqref{claim N minimal nonfaces}.

    To prove that $\F(\al,\be) = \{F_1, \ldots, F_m\}$, observe that the set of $x_t$'s chosen in each branch in Construction ~\ref{const:F and R} is a minimal transversal of $\NN(\al,\be)$, meaning that the set of $x_t$'s intersects every $N_{j_i}$, but any proper subset of $x_t$'s does not.
    This follows immediately from Steps~\ref{recursive step}--\ref{choice step}.
    Moreover, the construction produces \emph{all} minimal transversals of $\NN(\al,\be)$, one in each branch.
    Since each $F$ is the complement of the set of $x_t$'s (Step~\ref{update step}), the construction produces all sets $F$ which are maximal among those subsets of $[p]$ which are missing an element from each $N_{j_i}$.
    But then by~\eqref{claim N minimal nonfaces}, the $F_k$'s are maximal among subsets of $[p]$ which are missing an element from each minimal nonface of $\Delta(\al,\be)$, meaning that the $F_k$'s are the facets of $\Delta(\al,\be)$.
    Hence $\F(\al,\be) = \{F_1, \ldots, F_m\}$, as claimed.

    Next we claim that 
    \begin{equation}
        \label{claim R acts like restriction}
        R(F_k) = \Big\{ v \in F_k : (F_k \setminus \{v\}) \subseteq F_j \text{ for some } j < k \Big\} \quad \text{for all $1 \leq k \leq m$}.
    \end{equation}
    Letting $X_k$ denote the set of $x_t$'s chosen in the branch of Construction~\ref{const:F and R} which outputs $F_k$, and taking $[p]$ as our universal set, we have $X_k = \overline{F_k}$;
    thus~\eqref{claim R acts like restriction} is equivalent to
    \begin{equation}
        \label{claim equivalent}
        R(F_k) = \Big\{ v \in \overline{X_k} : (X_k \cup \{v\} ) \supseteq X_j \text{ for some } j < k \Big\} \quad \text{for all } 1 \leq k \leq m.
    \end{equation}
    Since the ordering on the $F_k$'s is obtained by giving priority to larger values of $x_t$, each interval $(x_t, b_{i_t}]$  consists precisely of those elements $v \in \overline{X_k}$ which have already been chosen as $x_t$'s earlier in the branch;
    therefore, taken over all $t$, these $v$'s are precisely the elements in $\overline{X_k}$ such that $X_k \cup \{v\}$ contains some earlier $X_j$.
    Since (by Step~\ref{update step}) the set $R(F_k)$ is the union of these intervals $(x_t, b_{i_t}]$, we have proved~\eqref{claim equivalent}, which establishes the equivalent claim~\eqref{claim R acts like restriction}.
    Moreover, the argument above shows that $R(F_k)$ intersects $X_i$ for all $i < k$, and thus
    \begin{equation}
        \label{claim previous facets}
        R(F_k) \not\subseteq F_i \quad \text{for all } i < k.
    \end{equation}
    Since~\eqref{claim R acts like restriction} and~\eqref{claim previous facets} are precisely the hypotheses of Lemma~\ref{lemma:shelling criterion}, that lemma implies that $F_1, \ldots, F_m$ is a shelling of $\Delta(\al,\be)$ with restriction map $R$.
    \end{proof}

    We conclude this section with two remarks connecting our constructions to other work in the literature.

\begin{remark}
    \label{rem:osculating}
     As mentioned above, $\O_{pq}$ can be viewed as the set of all unordered pairs of osculating lattice paths from $(0,0)$ to $(p,q)$, where the identification is made by tracing the outlines of the two shaded regions in $\Fr(\al,\be)$ from northwest to southeast.
    Also, $\O_{pq}$ can be viewed as the set of  skew Young diagrams fitting inside a $p \times q$ rectangle with no empty rows or columns, where the identification is made by taking the complement of $\Fr(\al,\be)$ in $[p] \times [q]$.
    This set (in the special case $p=3$) is the subject of Stanley's OEIS entry \href{https://oeis.org/A362153}{A362153}.
    Finally, we point out that the condition for $(\al,\be)$ to be in $\O_{pq}$ is very similar to (but slightly stronger than) the condition for $\al$ and $\be$ to be \emph{complementary} in the context of Grassmannian Schubert calculus, which is that $\NE{\al} \cap \SW{\be} = \emptyset$.    
\end{remark}

\begin{remark}
    There is a striking coincidence between the set $\NN(\al, \be)$ of minimal nonfaces in Construction~\ref{const:F and R} and the \emph{kelp bed product} (or \emph{extended Demazure product}) introduced in the Monge-related paper~\cite{Erickson25} by the first author;
    this generalized Tiskin's \emph{seaweed product} in~\cite{Tiskin15} from permutations to biwords.
    (In this remark, we require only \emph{partial permutations}.)
    Let $X \coloneqq \NE{\cor(\al) \cup \{(0,q)\}}$ and $Y \coloneqq \SW{\cor(\be) \cup \{(0,q)\}}$, and let $Y^t$ denote the set obtained from $Y$ by transposing each element $(i,j) \mapsto (j,i)$.
    Then it is fairly straightforward to verify that
    \begin{equation}
        \label{kelp miracle}
        X \star Y^t = \{ (a_i-1, b_i+1) \}_{i=1}^\ell \text{ ``$=$'' } \NN(\al, \be),
    \end{equation}
    where the $\star$ product is defined in~\cite{Erickson25}*{Def.~2.3}, and where the second equality (in quotes) requires one to view each ordered pair $(a_i - 1, b_i + 1)$ as the open interval of integers $a_i - 1 < i < b_i + 1$, which is the closed interval $[a_i, b_i]$ from Construction~\ref{const:F and R}.
    (It is a rare occasion when a reader is \emph{encouraged} to conflate these two conflicting meanings of pairs in round brackets.)
    Although the aforementioned papers~\cites{Erickson25,Tiskin15} do relate to tropical multiplication of simple Monge matrices (essentially via the restriction of our map $\pi$ to $\N^{X_{\sA}}$), nonetheless their context is completely disjoint from the present paper, and thus we are somewhat baffled by their ``prediction''~\eqref{kelp miracle} of our $\NN(\al, \be)$.
\end{remark}

\section{A Stanley decomposition of the Monge semigroup}
\label{sec:Stanley decomp}

We begin this section by specializing the general theory from Section~\ref{sec:preliminaries} to the subject of this paper, namely the family of Monge polytopes $\M_{pq}$.
In particular, we study the structure of the affine semigroup $S(\M_{pq})$.
The main result in this section is Theorem~\ref{thm:Stanley decomp}, which gives a Stanley decomposition of $S(\M_{pq})$ indexed by the pairs $(\al,\be) \in \O_{pq}$ and the facet sets $\F(\al,\be)$ which we constructed in Section~\ref{sec:combinatorics}.
This decomposition will immediately yield the Ehrhart series of $\M_{pq}$ in Corollary~\ref{cor:Ehrhart series}.

\subsection*{The Monge polytope and semigroup}

Recall from the introduction the \emph{Monge condition} on a matrix~$M$:
\begin{equation}
    \label{Monge condition in body}
    M_{ij} + M_{IJ} \leq M_{iJ} + M_{Ij} \text{ for all } i < I \text{ and } j < J.
\end{equation}

\begin{dfn}
    \label{def:Mpq}
    Given positive integers $p$ and $q$, the \emph{Monge polytope} is the set
    \[
        \M_{pq} \coloneqq \Big\{ M \in \R^{p \times q}_{\geq 0} : \text{$M$ satisfies~\eqref{Monge condition in body} and $\sum_{i,j} M_{ij} = 1$} \Big\}.
    \]
    By~\eqref{S(P)}--\eqref{S(P) grading}, the \emph{Monge semigroup} is the commutative monoid
    \[
        S(\M_{pq}) 
        = \coprod_{t=0}^\infty \Big\{ M \in \N^{p \times q} : \text{$M$ satisfies~\eqref{Monge condition in body} and $\sum_{i,j} M_{ij} = t$} \Big\} \times \{t\}.
    \]
    \textbf{NB:} In writing elements of $S(\M_{pq})$, we will typically drop the $t$ from $(M,t)$, since $t = \sum_{i,j} M_{ij}$.
    Thus we will write ``Let $M \in S(\M_{pq})$ \ldots'' etc.
    Note that, with respect to the induced $\N$-grading~\eqref{S(P) grading},
    \begin{equation}
        \label{deg M}
        \deg M = \sum_{i,j} M_{ij}.
    \end{equation}
\end{dfn}

To realize $\M_{pq}$ as a polytope, we view it as the subset of the ambient space $\R^{p \times q}$ cut out by the system~\eqref{Monge condition in body} of linear inequalities, and restricted to the simplex $\Delta_{pq-1}$ in which all coordinates are nonnegative and sum to 1.
Since $\Delta_{pq-1}$ has dimension $pq-1$, we likewise have
\begin{equation}
    \label{dim Mpq}
    \dim \M_{pq} = pq - 1.
\end{equation}
Note that $\Vol \M_{pq}$ as defined in~\eqref{Vol} gives the volume of $\M_{pq}$ relative to $\Delta_{pq-1}$.

Next we will establish a minimal generating set for the Monge semigroup $S(\M_{pq})$.
We will show that these generators are parametrized by the (external) disjoint union
\begin{equation}
        \label{X}
        X \coloneqq
        \underbrace{\Big( [p-1] \times [q-1] \Big)}_{\eqcolon \, X_{\sA}} \; \sqcup \; \underbrace{\Big( [p-1] \times [q-1] \Big)}_{\eqcolon \, X_{\sB}} \; \sqcup \underbrace{\vphantom{\Big(}[p]}_{\eqcolon \,  X_{\sC}} \sqcup \underbrace{\vphantom{\Big(}[q]}_{\eqcolon \, X_{\sD}},
    \end{equation}
where the explicit parametrization is given by the following inclusion map $\iota : X \longrightarrow S(\M_{pq})$.

\begin{dfn}
    \label{def:iota ABCD}
    Define the map
    \[
        \iota \coloneqq \sA \sqcup \sB \sqcup \sC \sqcup \sD
    \]
    induced on $X = X_{\sA} \sqcup X_{\sB} \sqcup X_{\sC} \sqcup X_{\sD}$ by the following maps $\sA, \sB, \sC, \sD$, where $J_{a \times b}$ denotes the $a \times b$ matrix with all entries equal to $1$:
    \begin{align*}
        \sA : X_{\sA} &\longrightarrow S(\M_{pq}), &  
        \sC : X_{\sC} & \longrightarrow S(\M_{pq}), \\[1ex]
        \sA(i,j) &= \left[
        \begin{array}{c|c}
        0 & J_{i \times j} \\
        \hline
        0 & 0
        \end{array}
        \right]_{\textstyle .} & 
        \sC(i) &= \left[
        \begin{array}{c}
        0 \\
        \hline
        J_{1 \times q} \\
        \hline
        0
        \end{array}
        \right]_{\textstyle .} {\scriptstyle \textup{$\leftarrow$ row $i$}} \\[5ex]
        \sB : X_{\sB} &\longrightarrow S(\M_{pq}), & 
        \sD: X_{\sD} & \longrightarrow S(\M_{pq}), \\[1ex]
        \sB(i,j) &= \left[
        \begin{array}{c|c}
        0 & 0 \\
        \hline
        J_{i \times j} & 0
        \end{array}
        \right]_{\textstyle .} &
        \sD(j) &= \underset{\substack{\uparrow \\ \textup{column $j$}}}{\left[
        \begin{array}{c|c|c}
        0 & J_{p \times 1} & 0
        \end{array}
        \right]_{\textstyle .}}
    \end{align*}
    By~\eqref{deg M}, with respect to the natural grading~\eqref{S(P) grading} on $S(\M_{pq})$,
    \begin{equation}
        \label{deg ABCD}
        \deg \sA(i,j) = \deg \sB(i,j) = ij, \quad \deg \sC(i) = q, \quad \deg \sD(j) = p.
    \end{equation}
\end{dfn}

\begin{lemma}
    \label{lemma:rays}
    The set
    \[
        \iota(X) = \Big\{ \sA(i,j) \Big\}_{(i,j) \in X_{\sA}} \cup \Big\{ \sB(i,j) \Big\}_{(i,j) \in X_{\sB}} \cup \Big\{ \sC(i) \Big\}_{i \in X_{\sC}} \cup \Big\{ \sD(j) \Big\}_{j \in X_{\sD}}
    \]
    is a minimal generating set for the Monge semigroup $S(\M_{pq})$.
    
    \end{lemma}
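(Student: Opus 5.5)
The plan is to prove three things in turn: each $\iota(x)$ lies in $S(\M_{pq})$; $\iota(X)$ generates the semigroup; and no element of $\iota(X)$ is a nonnegative integer combination of the others.

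\emph{Membership.} Each generator has nonnegative integer entries, so we only need the Monge condition~\eqref{Monge condition in body}. Since that condition is linear, it suffices to check it on adjacent $2\times 2$ minors: $M_{ij}+M_{i+1,j+1}\le M_{i,j+1}+M_{i+1,j}$. Summing these over a rectangle gives the general inequality. For $\sC(i)$ and $\sD(j)$ every adjacent minor is an equality, since rows and columns are constant. For $\sA(i,j)$, the support is the northeast $i\times j$ block of ones. An adjacent minor straddling its corner has the pattern
\[
\mat{0&1\\0&0},
\]
which gives $0\le 1$. Every other minor gives an equality. The case of $\sB(i,j)$ is symmetric, with the southwest block.

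\emph{Generation.} Let $M\in S(\M_{pq})$. Define the local Monge defects
\[
c_{ij}\coloneqq M_{i,j+1}+M_{i+1,j}-M_{ij}-M_{i+1,j+1}\ \ge 0
\]
for $(i,j)\in[p-1]\times[q-1]$. The key observation is that the defects of $\sA(i,j)$ and $\sB(i,j)$ are concentrated at single cells, namely their corners, while $\sC$ and $\sD$ have zero defect. Since $M$ is determined by its defects together with its first row and first column (a discrete mixed second difference), the matrix
\[
M' \coloneqq M - \sum_{(i,j)} c_{ij}\,\sA(i,j)
\]
has all defects zero. Hence $M'_{ij}=r_i+s_j$ with $r_i,s_j\in\mathbb{Z}$.

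The main obstacle is that $M'$ may have negative entries. This is where the freedom to use $\sB$ instead of $\sA$ must be exploited. Note that
\[
\sA(i,j)+\sB(p-i,q-j)=\sum_{i'\le i}\sC(i')+\sum_{j'>q-j}\sD(j')\ -\ (\text{overlap}),
\]
more precisely $\sA(i,j)+\sB(p-i,q-j)$ differs from a sum of full rows and full columns by a correction. The cleaner route I would try first is induction on $\deg M$. Show that every nonzero Monge $M\in\N^{p\times q}$ dominates some generator $g$ entrywise, with $M-g$ still Monge. The candidates are as follows.
\begin{itemize}
\item If every row has a positive minimum, subtract $\sC(i)$ for a suitable row.
\item If every column has a positive minimum, subtract $\sD(j)$ similarly.
\item Otherwise the zeros of $M$ force, via the Monge inequalities, that the support of $M$ avoids a region, and one subtracts $\sA(i,j)$ or $\sB(i,j)$ for a maximal NE or SW block of positive entries.
\end{itemize}
The crucial fact to verify is that in the last case the zero set of a nonnegative Monge matrix is "monotone". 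If $M_{ij}=0$ with $M$ nonnegative, then the inequality $M_{ij}+M_{IJ}\le M_{iJ}+M_{Ij}$ propagates positivity NE/SW. From this it follows that the positive part decomposes into a NE staircase and a SW staircase. The step I expect to be hardest is checking that subtracting the chosen generator preserves the Monge inequalities. The delicate point is at the corner of the block, where the defect of $M$ must be $\ge 1$. I would argue this by choosing the block $(i,j)$ maximal with $\min$ over the block positive, and using the adjacent-minor inequality at the corner.

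\emph{Minimality.} Use the defect map together with degree and support. $\sA(i,j)$ is the only generator with a positive defect at cell $(i,j)$ whose support lies in the NE corner. More concretely, suppose $\sA(i,j)=\sum n_x\iota(x)$. Then every generator used must have support inside the support of $\sA(i,j)$. No $\sC$, $\sD$, or $\sB$ satisfies this, because each of them meets the first column or the last row, which the block avoids. So only $\sA(i',j')$ with $i'\le i$ and $j'\le j$ can occur. The defect at $(i,j)$ then forces $\sA(i,j)$ itself to appear, and comparing degrees finishes the argument. Symmetric arguments handle $\sB(i,j)$. For $\sC(i)$, every generator used must lie in row $i$, and only $\sA(1,j)$ does so when $i=1$. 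The defect argument then rules these out, since the sum of defects of $\sC(i)$ is $0$. $\sD(j)$ is handled similarly.
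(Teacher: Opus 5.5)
Your minimality argument works. The generation step, however, has a concrete hole as written. The trichotomy ``every row has a positive minimum / every column has a positive minimum / otherwise'' sends $M=\mat{1&1\\0&0}=\sC(1)$ (with $p=q=2$) to the third case, and there nothing works. We have $\sB(1,1)\not\le M$, and $M-\sA(1,1)=\mat{1&0\\0&0}$ violates the Monge inequality. The maximal positive NE block is the whole first row, which is $\sC(1)$ and not any $\sA(i,j)$.

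The split must instead read: if \emph{some} row (column) has all entries positive, subtract that $\sC(i)$ ($\sD(j)$). This preserves the Monge property because these generators have zero defect everywhere. Otherwise every row and every column contains a zero.

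Only under that last hypothesis is the support of $M$ a frame $\Fr(\al,\be)$ with $(\al,\be)\in\O_{pq}$. That fact carries your whole argument, and you assert it rather than prove it. The mechanism is that zeros at the NE and SW corners of a rectangle force zeros at its NW and SE corners. From this you get three things:
\begin{enumerate}
\item the zeros in row $i$ form an interval $[\ell_i,r_i]$ (use a zero in each intermediate column);
\item $\ell_i$ and $r_i$ weakly increase with $i$;
\item consecutive intervals overlap or touch, because every column has a zero.
\end{enumerate}
The paper imports this from total monotonicity, citing Burkard.

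The step you expected to be hardest is in fact easy. At a corner $(i,j)\in\cor(\al)$, osculation forces $M_{i,q-j}=M_{i+1,q-j+1}=0$. So the defect of that adjacent minor is at least $M_{i,q-j+1}\ge 1$. Meanwhile $\sA(i,j)$ has defect $1$ at that single cell and $0$ elsewhere, so subtracting it keeps $M$ Monge.

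Once repaired, your peeling induction uses exactly the ingredients of the paper's generation proof. The paper instead gets generation from the section $\sigma$ of Lemma~\ref{lemma:Z}. It subtracts all column minima, then all row minima, and expands the two staircase pieces $M^\al$, $M^\be$ by their nonnegative density matrices. That yields a canonical normal form, which the paper needs anyway for the Stanley decomposition; your version gives only existence, one generator at a time.

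For minimality the routes genuinely differ. The paper identifies $\iota(X)$ with the primitive generators of the extreme rays found by Rudolf--Woeginger and invokes Bruns--Gubeladze. Your support-plus-defect argument is self-contained, and it actually shows each element of $\iota(X)$ is irreducible in $S(\M_{pq})$, hence lies in every generating set. Two small points:
\begin{enumerate}
\item For $\sC(p)$ you must also exclude $\sB(1,j)$, which lies in row $p$; the same argument does it.
\item Your argument, like the statement itself, tacitly needs $p,q\ge 2$: for $p=1$ one has $\sC(1)=\sum_j\sD(j)$.
\end{enumerate}
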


    \begin{proof}
        The fact that $\iota(X)$ generates $S(\M_{pq})$ follows from~\eqref{M decomposed} below, which exhibits an arbitrary $M \in S(\M_{pq})$ as an $\N$-linear combination of elements in $\iota(X)$.
        To show minimality, we observe that by Rudolf--Woeginger~\cite{RW95}*{Lemma 2.5}, the elements of $\iota(X)$ are precisely the primitive lattice points on the extreme rays of the pointed cone $\R_{\geq 0} \M_{pq}$.
        In particular, the dictionary between our maps $\sA$--$\sD$ and the notation of~\cite{RW95}*{\S2} is given by
        \begin{align*}
            \sA(i,j) &= R^{(i, q+j-1)}, &
            \sC(i) &= H^{(i)}, \\
            \sB(i,j)  &= L^{(p-i+1, j)}, &
            \sD(j) &= V^{(j)}.
        \end{align*}
        Thus by~\cite{BrunsGubeladze}*{Prop.~1.20}, $\iota(X)$ is a minimal system of generators for the cone $\R_{\geq 0} \M_{pq}$.
        Hence $\iota(X)$ is contained in every generating set of $S(\M_{pq})$, which proves minimality.
    \end{proof}
    
In writing down elements of $\N^X$, we will move freely between the two canonically isomorphic realizations
    \begin{equation}
        \label{two N^X's}
        \N^X \cong \N^{X_{\sA}} \oplus \N^{X_{\sB}} \oplus \N^{X_{\sC}}  \oplus \N^{X_{\sD}}.
    \end{equation}
As in Lemma~\ref{lemma:section}, $\iota$ extends uniquely to a (surjective, by Lemma~\ref{lemma:rays}) homomorphism of monoids
\begin{equation}
        \label{pi N^X}
        \begin{split}
        \pi: \N^X& \longrightarrow S(\M_{pq}), \\
        (n_x)_{x \in X} & \longmapsto \sum_{x \in X} n_x \iota(x),
        \end{split}
    \end{equation}
which can be written more explicitly using the right-hand side of~\eqref{two N^X's}:
    \begin{equation}
        \label{pi}
        \begin{split}
        \pi: \N^{X_{\sA}} \oplus \N^{X_{\sB}} \oplus \N^{X_{\sC}}  \oplus \N^{X_{\sD}} & \longrightarrow S(\M_{pq}), \\
        \Big( (a_{ij}), \: (b_{ij}), \: (c_i), \: (d_j) \Big) & \longmapsto \sum_{i,j} a_{ij} \sA(i,j) + \sum_{i,j} b_{ij} \sB(i,j) + \sum_{i} c_{i} \sC(i) + \sum_{j} d_{j} \sD(j).
        \end{split}
    \end{equation}
In particular, for any subset $Y = Y_{\sA} \sqcup Y_{\sB} \sqcup Y_{\sC} \sqcup Y_{\sD} \subseteq X$, where $Y_{\bullet} \subseteq X_{\bullet}$, we have
    \begin{equation}
        \label{pi(1_Y) and pi(Y)}
        \begin{split}
        \pi(\1_Y) &= \1_{\sA(Y_{\sA}) \: \cup \: \sB(Y_{\sB}) \: \cup \: \sC(Y_{\sC}) \: \cup \: \sD(Y_{\sD})}, \\[.5ex]
        \pi(Y) &= \sA(Y_{\sA})\cup\sB(Y_{\sB})\cup\sC(Y_{\sC})\cup\sD(Y_{\sD}).
        \end{split}
\end{equation}

\subsection*{A Stanley decomposition \texorpdfstring{of $S(\M_{pq})$}{}}

Similarly to~\eqref{two N^X's}, in writing down elements of $2^X$, we will move freely between the two sets
    \begin{equation}
        \label{two 2^X's}
        2^X \cong 2^{X_{\sA}} \times 2^{X_{\sB}} \times 2^{X_{\sC}}  \times 2^{X_{\sD}}
    \end{equation}
via the canonical bijection
    \begin{equation}
        \label{2^X iso}
        \begin{split}
        2^{X_{\sA}} \times 2^{X_{\sB}} \times 2^{X_{\sC}} \times  2^{X_{\sD}} & \longrightarrow 2^{X}, \\
        (Z_{\sA}, Z_{\sB}, Z_{\sC}, Z_{\sD}) & \longmapsto Z_{\sA} \sqcup Z_{\sB} \sqcup Z_{\sC} \sqcup Z_{\sD}.
        \end{split}
    \end{equation}
The bijection~\eqref{2^X iso} sends a product of Boolean intervals 
    \begin{equation}
        \label{product of Booleans}
        [U_{\sA}, V_{\sA}] \times [U_{\sB}, V_{\sB}] \times [U_{\sC}, V_{\sC}] \times [U_{\sD}, V_{\sD}]
    \end{equation}
(as a subset of the left-hand side of~\eqref{2^X iso}) to the single Boolean interval
    \begin{equation}
        \label{single Boolean}
        [U_{\sA} \sqcup U_{\sB} \sqcup U_{\sC} \sqcup U_{\sD}, \: V_{\sA} \sqcup V_{\sB} \sqcup V_{\sC} \sqcup V_{\sD}]
    \end{equation}
(a subset of the right-hand side of~\eqref{2^X iso}).
For the moment, we focus on the first two factors in~\eqref{2^X iso}:
by equipping $X_{\sA} = X_{\sB} = [p-1] \times [q-1]$ with the product order from Definition~\ref{def:Parpq and Opq}(\ref{poset interp}), and applying Lemma~\ref{lemma:poset partition} twice, we have
\begin{equation}
    \label{2^A partition}
    \begin{split}
        2^{X_{\sA}} \times 2^{X_{\sB}} &= \Bigg(\coprod_{\al \in \Par_{pq}} [\cor(\al), \: \al]\Bigg) \times \Bigg(\coprod_{\be \in \Par_{pq}} [\cor(\be), \: \be]\Bigg) \\
        &= \coprod_{\al, \be \in \Par_{pq}} [\cor(\al), \: \al] \times [\cor(\be), \: \be].
    \end{split}
\end{equation}
Hence, given $(A,B) \in 2^{X_{\sA}} \times 2^{X_{\sB}}$, we can unambiguously define
\begin{equation}
    \label{omega(A,B)}
    \omega(A,B) \coloneqq \text{the unique $(\al, \be) \in \Par_{pq} \times \Par_{pq}$ such that $(A,B) \in [\cor(\al), \: \al] \times [\cor(\be), \: \be]$}.
\end{equation}

\begin{dfn}
    \label{def:sigma}
    We define the map
    \begin{align*}
        \sigma : S(\M_{pq}) &\longrightarrow \N^{X_{\sA}} \oplus \N^{X_{\sB}} \oplus \N^{X_{\sC}} \oplus \N^{X_{\sD}}, \\
        M & \longmapsto \Big( (a_{ij}), \: (b_{ij}), \: (c_i), \: (d_j) \Big)
    \end{align*}
    as follows:
    \begin{enumerate}
        \item \label{dj} Set $d_j = \min_i \{ M_{ij} \}$.
        
        Then let $M' \coloneqq M - \sum_{j} d_j \sD(j)$.
        Note that $M' \in S(\M_{pq})$ has a zero in every column.
        
        \item \label{ci} Set $c_i = \min_j \{M'_{ij} \}$.

        Then let $M'' \coloneqq M' - \sum_{i} c_i \sC(i)$.
        Note that $M'' \in S(\M_{pq})$ has a zero in every row and every column.
        Moreover, since Monge matrices are totally monotone~\cite{Burkard}*{Lemma 2.4}, the leftmost zero in each row of $M''$ moves weakly to the right as one proceeds from the top row to the bottom row, and the same is true of the rightmost zero in each row;
        note that all intervening entries are necessarily also zeros.
        Hence the support of $M''$ (i.e., the set of positions $(i,j)$ such that $M''_{ij} \neq 0$) equals $\Fr(\al,\be)$ for some $(\al,\be) \in \O_{pq}$.
        Thus we may write 
        \[
            M'' = M^\al + M^\be,
        \]
        where $M^\al$ (resp., $M^\be)$ is the restriction of $M''$ to the positions in $\NE{\al}$ (resp., $\SW{\be}$), with zeros everywhere else.
        
        \item \label{aij} Set $a_{ij} = 
        M^\al_{i,q-j+1} + M^\al_{i+1,q-j} - M^\al_{i+1,q-j+1} - M^\al_{i,q-j}$.

        \item \label{bij} Set $b_{ij} = 
        M^\be_{p-i+1,j} + M^\be_{p-i,j+1} - M^\be_{p-i,j} - M^\be_{p-i+1,j+1}$.
        
    \end{enumerate}
\end{dfn}

\begin{example}
    \label{ex:sigma}
    We give an example of the map $\sigma$ from Definition~\ref{def:sigma}.
    Let $p = 4$ and $q = 6$, and let
    \[
        M = 
        \begin{bmatrix}
            5 & 0 & 3 & 8 & 15 & 13 \\
            6 & 1 & 1 & 6 & 8 & 6 \\
            13 & 8 & 5 & 8 & 9 & 7 \\
            12 & 6 & 3 & 6 & 7 & 2
        \end{bmatrix}
        \in S(\M_{4,6}).
    \]
    One can quickly verify that $M \in S(\M_{4,6})$ by checking that $M$ satisfies the Monge condition~\eqref{Monge condition in body};
    in fact, it is well known~\cite{Burkard}*{eqn.~(6)} that it suffices to check the Monge condition on every contiguous $2 \times 2$ submatrix.
    To recover the coordinates of $\sigma(M) = ((a_{ij}), (b_{ij}), (c_i), (d_j))$, we follow the four steps of Definition~\ref{def:sigma}:
    \begin{enumerate}
        \item We have $(d_j) = (5,0,1,6,7,2)$, given by the minimum entry in each column of $M$.
        Subtracting $d_j$ from every entry in column $j$, we have
        \[
            M' = \begin{bmatrix}
                0 & 0 & 2 & 2 & 8 & 11 \\
                1 & 1 & 0 & 0 & 1 & 4 \\
                8 & 8 & 4 & 2 & 2 & 5 \\
                7 & 6 & 2 & 0 & 0 & 0
            \end{bmatrix}.
        \]

        \item We have $(c_i) = (0,0,2,0)$, given by the minimum entry in each row of $M'$.
        Subtracting $c_i$ from every entry in row $i$, we have
        \begin{align*}
            M'' &= \begin{bmatrix}
                \gz & \gz & 2 & 2 & 8 & 11 \\
                1 & 1 & \gz & \gz & 1 & 4 \\
                6 & 6 & 2 & \gz & \gz & 3 \\
                7 & 6 & 2 & \gz & \gz & \gz
            \end{bmatrix} \\
            &= \underbrace{\begin{bmatrix}
                \gz & \gz & 2 & 2 & 8 & 11 \\
                \gz & \gz & \gz & \gz & 1 & 4 \\
                \gz & \gz & \gz & \gz & \gz & 3 \\
                \gz & \gz & \gz & \gz & \gz & \gz
            \end{bmatrix}}_{M^\al} + 
            \underbrace{\begin{bmatrix}
                \gz & \gz & \gz & \gz & \gz & \gz \\
                1 & 1 & \gz & \gz & \gz & \gz \\
                6 & 6 & 2 & \gz & \gz & \gz \\
                7 & 6 & 2 & \gz & \gz & \gz
            \end{bmatrix}}_{M^\be},
        \end{align*}
        where $\al = (4,2,1)$ and $\be = (3,3,2)$.

        \item On inspecting $M^\al$, we find that
        \begin{align*}
        a_{1,2} &= 8+0-2-1 = 5, \\
        a_{1,4} &= 2 + 0 - 0 - 0 = 2, \\ a_{2,2} &= 1 + 0 - 0 - 0 = 1, \\
        a_{3,1} &= 3 + 0 - 0 - 0 = 3,
        \end{align*}
        and all other $a_{ij} = 0$.

        \item On inspecting $M^\be$, we find that
        \begin{align*}
            b_{1,1} &= 7 + 6 - 6 - 6 = 1, \\
            b_{2,2} &= 6 + 0 - 1 - 2 = 3, \\
            b_{2,3} &= 2 + 0 - 0 - 0 = 2, \\
            b_{3,2} &= 1 + 0 - 0 - 0 = 1,
        \end{align*}
        and all other $b_{ij} = 0$.
    \end{enumerate}
    It is straightforward to check in this example that $\pi \circ \sigma(M) = M$, that is,
    \begin{align*}
        M &= \phantom{+} 5 \sA(1,2) + 2 \sA(1,4) + \sA(2,2) + 3 \sA(3,1) \\
        & \phantom{=} + \sB(1,1) + 3 \sB(2,2) + 2 \sB(2,3) + \sB(3,2) \\
        & \phantom{=} + 2 \sC(3) \\
        & \phantom{=} + 5 \sD(1) + \sD(3) + 6 \sD(4) + 7 \sD(5) + 2 \sD(6).
    \end{align*}
    In Lemma~\ref{lemma:Z}, we show that this is no coincidence:
    in general, the expansion above can be viewed as the canonical form of $M$ given by the map $\sigma$.
\end{example}

\begin{lemma}
    \label{lemma:Z}
    The map $\sigma$ in Definition~\ref{def:sigma} is a section of the map $\pi$ from~\eqref{pi}, that is, $\pi \circ \sigma = \operatorname{id}_{S(\M_{pq})}$.
    Moreover, setting
    \[
        Z \coloneqq \Big\{ (A,B,C,D) \in 2^{X_{\sA}} \times 2^{X_{\sB}} \times 2^{X_{\sC}} \times 2^{X_{\sD}} : \omega(A,B) \in \O_{pq} \textup{ and } C \in \Delta(\omega(A,B)) \Big\},
    \]
    we have
    \[
        \sigma(S(\M_{pq})) = \supp^{-1}(Z) \subseteq \N^{X_{\sA}} \oplus \N^{X_{\sB}} \oplus \N^{X_{\sC}} \oplus \N^{X_{\sD}}. 
    \]
\end{lemma}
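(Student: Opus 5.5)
We prove the two claims in order: first that $\pi\circ\sigma=\operatorname{id}$, then that the image of $\sigma$ is exactly $\supp^{-1}(Z)$.

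\textbf{Step 1: $\sigma$ is a section of $\pi$.} We have $M = \sum_j d_j\sD(j) + \sum_i c_i \sC(i) + M^\al + M^\be$ by construction. So it suffices to show that
\[
M^\al=\sum_{i,j} a_{ij}\sA(i,j) \quad\text{and}\quad M^\be=\sum_{i,j} b_{ij}\sB(i,j).
\]
Work with the reflected matrix $N_{ij}\coloneqq M^\al_{i,q-j+1}$, extended by zero outside $\al$ (in particular in row $p$ and column $q$ of the reflection, since $\al\subseteq[p-1]\times[q-1]$). Then $a_{ij} = N_{ij}-N_{i+1,j}-N_{i,j+1}+N_{i+1,j+1}$ is a mixed second difference. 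The telescoping identity $N_{ij}=\sum_{i'\ge i,\,j'\ge j} a_{i'j'}$ holds, and $\sA(i',j')$ reflects to the indicator of the rectangle $[1,i']\times[1,j']$. Together these give the claimed identity for $M^\al$; the argument for $M^\be$ is identical. For this we need that $M^\al$ and $M^\be$ are themselves the correct pieces, i.e.\ that the support of $M''$ splits into $\NE{\al}$ and $\SW{\be}$, as asserted in Definition~\ref{def:sigma}. This needs a short check via total monotonicity and the fact that every row and column of $M''$ contains a zero.

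\textbf{Step 2: $\sigma(S)\subseteq\supp^{-1}(Z)$.} Let $M\in S(\M_{pq})$ and $(A,B,C,D)=\supp\sigma(M)$.
\begin{itemize}
\item[(i)] The $a_{ij}$ and $b_{ij}$ are nonnegative. This follows from the Monge condition applied to $M''$ on contiguous $2\times2$ blocks inside $\NE{\al}$ (using zeros outside it), so $\sigma(M)\in\N^X$.
\item[(ii)] $\omega(A,B)=(\al,\be)$. Since $M^\al$ is strictly positive exactly on $\NE{\al}$, the telescoping sum shows that $\supp(a)$ has lower closure $\al$; that is, $A\in[\cor(\al),\al]$. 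The same holds for $B$. The pair $(\al,\be)$ is osculating by Definition~\ref{def:sigma}.
\item[(iii)] $C\in\Delta(\al,\be)$. If $C\supseteq N_j$ for some $j$, then column $j$ of $M'$ is positive in rows $N_j$ (because $c_i>0$ there) and positive on the frame rows (because $M''>0$ there). Hence column $j$ of $M'$ has no zero, contradicting step~(1).
\end{itemize}

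\textbf{Step 3: the reverse inclusion.} This is the main obstacle. Given $\mathbf n\in\supp^{-1}(Z)$, we must show $\sigma(\pi(\mathbf n))=\mathbf n$. Set $M=\pi(\mathbf n)$ and let $M^{\circ}\coloneqq\sum a_{ij}\sA(i,j)+\sum b_{ij}\sB(i,j)$. By the lower-closure condition $\omega(A,B)=(\al,\be)$, the support of $M^{\circ}$ is exactly $\Fr(\al,\be)$, with zeros elsewhere. Since $(\al,\be)$ is osculating, every row and column of $\Fr(\al,\be)$'s complement is nonempty.

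We verify the recovery in order:
\begin{itemize}
\item \emph{Recovering $d_j$.} Each column $j$ of $M^{\circ}+\sum c_i\sC(i)$ has a zero in some row $i\in N_j\setminus C$, which exists because $N_j\not\subseteq C$. So the column minimum of $M$ is $d_j$, and step~(1) recovers $d_j$.
\item \emph{Recovering $c_i$.} Each row $i$ of $M^{\circ}$ contains a zero outside the frame (row $i$ of the complement is nonempty). So step~(2) recovers $c_i$, and then $M''=M^{\circ}$.
\item \emph{Recovering $a_{ij}$ and $b_{ij}$.} Inverting the mixed differences on $M^{\circ}$ returns them, since the second-difference map and rectangle summation are mutually inverse.
\end{itemize}
Combined with Step~2, this shows $\sigma(S)=\supp^{-1}(Z)$.
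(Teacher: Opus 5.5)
Your proposal is correct and follows the paper's proof essentially step for step. It establishes the section property via the mixed-difference (density) decomposition of $M^\al$ and $M^\be$, and the inclusion $\sigma(S(\M_{pq}))\subseteq\supp^{-1}(Z)$ via lower closures of supports and the zero-in-every-column property of $M'$. It then gets the reverse inclusion by verifying $\sigma\circ\pi=\operatorname{id}$ on $\supp^{-1}(Z)$ coordinate by coordinate.

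The differences are only minor. You prove the telescoping inversion directly where the paper cites an external density-matrix lemma. You also explicitly check $a_{ij},b_{ij}\geq 0$, a point the paper leaves implicit; for $2\times 2$ blocks straddling the boundary of $\NE{\al}$, this check tacitly uses that $(\al,\be)$ is osculating.
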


\begin{proof}
    Note that in $M^\al$, the leftmost column and bottom row are all zeros.
    Consequently, in Definition~\ref{def:sigma}(\ref{aij}), the formula for $a_{ij}$ is the formula for the $(i,j)$ entry of the \emph{density matrix} of $M$; see~\cite{Erickson25}*{p.~5}.
    Thus by~\cite{Erickson25}*{Lemma 3.1}, the $a_{ij}$'s are the unique coefficients such that
    \begin{equation}
        \label{M alpha is A's}
        M^\al = \sum_{i,j} a_{ij} \sA(i,j).
    \end{equation}
    Likewise, rotating this picture by 180 degrees, the $b_{ij}$'s are the unique coefficients such that
    \begin{equation}
        \label{M beta is B's}
        M^\be = \sum_{i,j} b_{ij} \sB(i,j).
    \end{equation}
    Hence since $M'' = M^\al + M^\be$, we have
    \[
        M'' = \sum_{i,j} a_{ij} \sA(i,j) + \sum_{i,j} b_{ij} \sB(i,j).
    \]
    Thus we obtain
    \begin{equation}
        \label{M decomposed}
        \begin{split}
        \pi \circ \sigma(M) & = \pi \Big( (a_{ij}), \: (b_{ij}), \: (c_i), \: (d_j) \Big) \\
        & = \underbrace{\sum_{i,j} a_{ij} \sA(i,j) + \sum_{i,j} b_{ij} \sB(i,j)}_{M''} + \underbrace{\sum_{i} c_{i} \sC(i)}_{M' - M''} + \underbrace{\sum_{j} d_{j} \sD(j)}_{M - M'} \\
        &= M,
        \end{split}
    \end{equation}
    and so $\pi \circ \sigma = \operatorname{id}_{S(\M_{pq})}$.
    This proves that $\sigma$ is a section of $\pi$.

    To show that $\sigma(S(\M_{pq})) = \supp^{-1}(Z)$, it suffices to show both that $\sigma(S(\M_{pq})) \subseteq \supp^{-1}(Z)$, that is,
    \begin{equation}
        \label{supp in Z}
        \supp\big(\sigma(S(\M_{pq}))\big) \subseteq Z,
    \end{equation}
    and also that
    \begin{equation}
        \label{sigma circ pi = id}
        \sigma \circ \pi = \operatorname{id}_{\supp^{-1}(Z)}.
    \end{equation}    
    To this end, let $M \in S(\M_{pq})$, and let $\sigma(M) = ( (a_{ij}), (b_{ij}), (c_i), (d_j) )$, so that
    \[
        \supp( \sigma(M)) = \underbrace{\Big\{ (i,j) : a_{ij} > 0 \Big\}}_{A} \sqcup \underbrace{\Big\{ (i,j) : b_{ij} > 0 \Big\}}_{B} \sqcup \underbrace{\Big\{ i : c_i > 0 \Big\}}_{C} \sqcup \underbrace{\Big\{ j : d_j > 0 \Big\}}_{D}.
    \]
    By the definition of $\al = \al(M)$ in Definition~\ref{def:sigma}(\ref{ci}), and by~\eqref{M alpha is A's}, we have $A \in [\cor(\al), \: \al]$;
    by the same argument $B \in [\cor(\be), \: \be]$.
    Thus $\omega(A,B) = (\al, \be)$, and moreover, since we already observed in Definition~\ref{def:sigma}(\ref{ci}) that $(\al, \be) \in \O_{pq}$, we have $\omega(A,B) \in \O_{pq}$, which is the first condition on $Z$ in the statement of the lemma.
    The second is that $C \in \Delta(\omega(A,B))$, or as we can now rewrite, $C \in \Delta(\al, \be)$.
    To show this, we recall that by Definition~\ref{def:sigma}(\ref{dj}), the matrix
    \[
        M' =  \underbrace{\sum_{i,j} a_{ij} \sA(i,j) + \sum_{i,j} b_{ij} \sB(i,j)}_{M''} + \underbrace{\sum_{i} c_{i} \sC(i)}_{M' - M''}
    \]
    has a zero in every column;
    on the other hand, the support of $M'$ equals the union of the supports of $M^\al$ and of $M^\be$ and of $\sum_{i \in C} \sC(i)$, which therefore must not include a full column.
    Hence by Definition~\ref{def:Parpq and Opq}(\ref{Delta(alpha,beta)}), we have $C \in \Delta(\al, \be)$ as desired.
    This completes the proof of~\eqref{supp in Z}.
    
    Finally, to show~\eqref{sigma circ pi = id}, let $\mathbf{n} = ((a_{ij}), (b_{ij}), (c_i), (d_j)) \in \supp^{-1}(Z)$, and write 
    \[ 
        A \coloneqq \supp(a_{ij}), \qquad 
        B \coloneqq \supp(b_{ij}), \qquad 
        C \coloneqq \supp(c_{i}), \qquad
        D \coloneqq \supp(d_{j}). 
    \]
    Now let
    \[
        M = \pi(\mathbf{n}) = \underbrace{\overbrace{\sum_{i,j}^{\phantom{*}} a_{ij} \sA(i,j) + \sum_{i,j} b_{ij} \sB(i,j)}^{(**)} + \sum_{i} c_{i} \sC(i)}_{(*)} + \sum_{j} d_{j} \sD(j).
    \]
    We need to show that $\sigma(M) = \mathbf{n}$, and so we follow the four steps of Definition~\ref{def:sigma}:
        \begin{enumerate}
            \item     Since $\supp(\mathbf{n}) \in Z$, we have $C \in \Delta(\omega(A,B))$;
            thus from the definition of this complex in Definition~\ref{def:Parpq and Opq}(\ref{Delta(alpha,beta)}), it follows that the matrix $(*)$ has a zero in every column.
            Therefore in $M$, the minimum entry in each column $j$ equals $d_j$, so $\sigma(M)$ recovers $(d_j)$ from $\mathbf{n}$.
            Consequently, we have $M' = (*)$.

            \item Since $\supp(\mathbf{n}) \in Z$, we have $\omega(A,B) \in \O_{pq}$, where $\omega$ was defined in~\eqref{omega(A,B)}; thus from the definition of $\O_{pq}$ in Definition~\ref{def:Parpq and Opq}(\ref{osculating}), it follows that the matrix $(**)$ has a zero in every row (and column).
            Therefore in $M' = (*)$, the minimum entry in each row $i$ equals $c_i$, so $\sigma(M)$ recovers $(c_i)$ from $\mathbf{n}$.
            Consequently, we have $M'' = (**)$.

            \item Since $\omega(A,B) \in \O_{pq}$, the decomposition $M'' = (**) = M^\al + M^\be$ can be carried out as in Definition~\ref{def:sigma}(\ref{ci}), where $(\al, \be) = \omega(A,B)$.
            Thus $M^\al = \sum_{i,j} a_{ij} \sA(i,j)$ and $M^\be = \sum_{i,j} b_{ij} \sB(i,j)$.
            From the uniqueness of the $a_{ij}$'s in the density decomposition of $M^\al$~\eqref{M alpha is A's}, it follows that $\sigma(M)$ recovers $(a_{ij})$ in $\mathbf{n}$.

            \item From the uniqueness of the $b_{ij}$'s in the density decomposition of $M^\be$~\eqref{M beta is B's}, it follows that $\sigma(M)$ recovers $(b_{ij})$ in $\mathbf{n}$.
        \end{enumerate}
        Thus we have $\sigma(M) = \mathbf{n}$.
        This establishes~\eqref{sigma circ pi = id}, and completes the proof of Lemma~\ref{lemma:Z}.
\end{proof}

\begin{theorem}
    \label{thm:Stanley decomp}
    The Monge semigroup (Definition~\ref{def:Mpq}) admits the Stanley decomposition
        \[
            S(\M_{pq}) = \coprod_{\substack{(\al, \be) \\ \in \O_{pq}}} \;\; \coprod_{\substack{F \\ \in \F(\al,\be)}} \Bigg( \1_{\sA(\cor(\al)) \: \cup \: \sB(\cor(\be)) \: \cup \: \sC(\res(F))} + \N \Big( \sA(\al) \cup \sB(\be) \cup \sC(F) \cup \sD([q]) \Big) \Bigg),
        \]
     where
     \begin{itemize}
         \item[] $\cor( \; )$ is given in Definition~\ref{def:Parpq and Opq}(\ref{corners});
         \item[] $\O_{pq}$ is given in Definition~\ref{def:Parpq and Opq}(\ref{osculating});
         \item[] $\F(\al,\be)$ and $\res(F)$ are given in Construction~\ref{const:F and R}, via Lemma~\ref{lemma:F and R};
         \item[] $\sA$, $\sB$, $\sC$, and $\sD$ are defined in Definition~\ref{def:iota ABCD}.
     \end{itemize}
\end{theorem}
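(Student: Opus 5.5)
The plan is to assemble the decomposition from the lemmas already in place, working entirely on the free-monoid side $\N^X$ and then pushing forward along $\pi$. By Lemma~\ref{lemma:Z}, $\sigma$ is a section of $\pi$ and $\sigma(S(\M_{pq})) = \supp^{-1}(Z)$. By Lemma~\ref{lemma:section}, any Stanley decomposition of $\supp^{-1}(Z)$ maps to one of $S(\M_{pq})$. By Lemma~\ref{lemma:support}, such a decomposition follows from writing $Z$ as a disjoint union of Boolean intervals in $2^X$. So the whole proof reduces to a Boolean-interval partition of $Z$.

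To obtain that partition, I would use the identification~\eqref{2^X iso} and treat the four factors separately. First, by~\eqref{2^A partition}, $2^{X_\sA}\times 2^{X_\sB}$ is the disjoint union of the products $[\cor(\al),\al]\times[\cor(\be),\be]$ over $(\al,\be) \in \Par_{pq} \times \Par_{pq}$. On such a block $\omega(A,B)=(\al,\be)$ is constant, so the defining conditions of $Z$ depend only on the block. Restricting to $(\al,\be)\in\O_{pq}$ gives
\[
Z=\coprod_{(\al,\be)\in\O_{pq}} [\cor(\al),\al]\times[\cor(\be),\be]\times \Delta(\al,\be)\times 2^{[q]}.
\]
Next, I would partition the $C$-factor with Lemma~\ref{lemma:F and R} and Lemma~\ref{lemma:complex partition}. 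These give $\Delta(\al,\be)=\coprod_{F\in\F(\al,\be)}[\res(F),F]$. The $D$-factor is the trivial interval $2^{[q]} = [\emptyset,[q]]$. Taking products of intervals yields single intervals via~\eqref{product of Booleans}--\eqref{single Boolean}, so
\[
Z=\coprod_{(\al,\be)\in\O_{pq}}\ \coprod_{F\in\F(\al,\be)} \big[\cor(\al)\sqcup\cor(\be)\sqcup\res(F)\sqcup\emptyset,\ \al\sqcup\be\sqcup F\sqcup[q]\big].
\]

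Applying Lemma~\ref{lemma:support} then gives a Stanley decomposition of $\supp^{-1}(Z)=\sigma(S(\M_{pq}))$. Its spaces are $\1_{\cor(\al)\sqcup\cor(\be)\sqcup\res(F)}+\N(\al\sqcup\be\sqcup F\sqcup[q])$, where the subsets are read inside $X$. Lemma~\ref{lemma:section} transports this along $\pi$, and~\eqref{pi(1_Y) and pi(Y)} converts $\pi(\1_Y)$ and $\pi(Y)$ into the stated shifts $\1_{\sA(\cor(\al))\cup\sB(\cor(\be))\cup\sC(\res(F))}$ and generator sets $\sA(\al)\cup\sB(\be)\cup\sC(F)\cup\sD([q])$.

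Most of this is bookkeeping, and the substantive inputs are Lemmas~\ref{lemma:Z} and~\ref{lemma:F and R}, which are already established. The step needing the most care is the linear independence of each $\pi(B_i)$ and the disjointness of the images. Lemma~\ref{lemma:section} supplies both, because $\pi$ is injective on $\sigma(S)$. I would still check one detail: $\pi$ restricted to $X$ is injective, so that $\pi(\1_Y)$ really is the sum over the image set $\pi(Y)$ with no collisions. This holds because $\iota$ is injective, as Lemma~\ref{lemma:rays} shows the generators are distinct extreme-ray generators.
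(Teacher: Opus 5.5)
Your proposal is correct and follows the paper's proof essentially step for step. Like the paper, you partition $Z$ via \eqref{2^A partition}, restrict to $\O_{pq}$, split $\Delta(\al,\be)$ using Lemmas~\ref{lemma:complex partition} and~\ref{lemma:F and R}, merge the products of intervals into single intervals, and then apply Lemmas~\ref{lemma:support}, \ref{lemma:Z}, and~\ref{lemma:section} together with \eqref{pi(1_Y) and pi(Y)}. Your extra check that $\iota$ is injective is sensible care the paper leaves implicit; it is clear from the supports of the generators, except when $p=q=1$, where $\sC(1)=\sD(1)$, and that case is harmless because $\Delta(\emptyset,\emptyset)=\{\emptyset\}$ means no $\sC$ generator ever appears.
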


\begin{proof}

The set $Z \subseteq 2^{X_{\sA}} \times 2^{X_{\sB}} \times 2^{X_{\sC}} \times 2^{X_{\sD}}$ from Lemma~\ref{lemma:Z} equals
    \begin{align*}
        Z &= \coprod_{(\al, \be) \in \O_{pq}} \Big\{ (A,B) \in 2^{X_{\sA}} \times 2^{X_{\sB}} : \omega(A,B) = (\al, \be) \Big\} \times \Delta(\al, \be) \times 2^{[q]} \\
        &= \coprod_{(\al, \be) \in \O_{pq}} [ \cor(\al), \: \al] \times [ \cor(\be), \: \be] \times \Delta(\al, \be) \times 2^{[q]} \\
        &= \coprod_{(\al, \be) \in \O_{pq}} [ \cor(\al), \: \al] \times [ \cor(\be), \: \be] \times \Bigg( \coprod_{F \in \F(\al,\be)} [ \res(F), \: F] \Bigg) \times 2^{[q]} \\
        & = \coprod_{(\al,\be) \in \O_{pq}}\ \coprod_{F \in \F(\al,\be)}
        [\cor(\al), \: \al] \times [\cor(\be), \: \be]\times [\res(F), \: F] \times [\emptyset, \: [q]],
    \end{align*}
where the third equality follows from Lemmas~\ref{lemma:complex partition} and~\ref{lemma:F and R} applied to $\Delta(\al, \be)$, and the fourth equality uses the fact that $2^{[q]} = [\emptyset, \: [q]]$.
Now we shift perspective from $2^{X_{\sA}} \times 2^{X_{\sB}} \times 2^{X_{\sC}} \times 2^{X_{\sD}}$ to $2^X$ via the bijection~\eqref{2^X iso};
thus we consider $Z \subseteq 2^X$, and by~\eqref{product of Booleans}--\eqref{single Boolean} we continue our computation from above as
    \[
        Z =
        \coprod_{(\al,\be)\in\O_{pq}}\ \coprod_{F\in\F(\al,\be)}   \big[\cor(\al)\sqcup\cor(\be)\sqcup\res(F)\sqcup\emptyset, \;\;
        \al\sqcup\be\sqcup F\sqcup[q]\big].
    \]
    Applying Lemma~\ref{lemma:support} to this partition yields a Stanley decomposition of $\operatorname{supp}^{-1}(Z)\subseteq\N^X$:
    \[
        \operatorname{supp}^{-1}(Z) = \coprod_{(\al,\be)\in\O_{pq}}\ \coprod_{F\in\F(\al,\be)}
        \Big(\1_{\cor(\al) \: \sqcup \: \cor(\be) \: \sqcup \: \res(F) \: \sqcup \: \emptyset} + \N\big(\al\sqcup\be\sqcup F\sqcup[q]\big)\Big).
    \]
    By Lemma~\ref{lemma:Z}, $\operatorname{supp}^{-1}(Z)=\sigma(S(\M_{pq}))$, so this is precisely a Stanley decomposition of $\sigma(S(\M_{pq}))\subseteq\N^X$. 
    Applying the Section Lemma~\ref{lemma:section} gives a Stanley decomposition of $S(\M_{pq})$:
    \[
        S(\M_{pq}) = \coprod_{(\al,\be)\in\O_{pq}}\ \coprod_{F\in\F(\al,\be)}
        \Big(\pi\big(\1_{\cor(\al) \: \sqcup \: \cor(\be) \: \sqcup \: \res(F) \: \sqcup \: \emptyset}\big)
        + \N\,\pi\big(\al\sqcup\be\sqcup F\sqcup[q]\big)\Big).
    \]
    The result now follows from~\eqref{pi(1_Y) and pi(Y)}.
\end{proof}

\begin{corollary}[Restatement of Theorem~\ref{thm:Ehrhart in intro}]
    \label{cor:Ehrhart series}
    The Ehrhart series~\eqref{Ehr(Pz)} of the Monge polytope is given by
    \[
        \Ehr(\M_{pq}; z) = 
        \frac{1}{(1-z^p)^q} \cdot 
        \sum_{(\al, \be) \in \O_{pq}} \frac{z^{\d(\al, \be)} }{\prod_{(i,j) \in \al} (1-z^{ij}) \prod_{(i,j) \in \be} (1-z^{ij})} 
        \left( 
        \sum_{F \in \F(\al,\be)} \frac{z^{q \left| \res(F) \right|} }{ (1-z^q)^{|F|}} \right),
    \]
    where, from Definition~\ref{def:Parpq and Opq}(\ref{d definition}),
    \[
        \d(\al, \be)
        \coloneqq \sum_{\mathclap{\substack{(i,j) \\ \in \cor(\al)}}} ij + \sum_{\mathclap{\substack{(i,j) \\ \in \cor(\be)}}} ij.
    \]
\end{corollary}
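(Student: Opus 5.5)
The plan is to read off the Hilbert series of $\k[S(\M_{pq})]$ directly from the Stanley decomposition in Theorem~\ref{thm:Stanley decomp}, and then identify it with the Ehrhart series. By~\eqref{Ehr Hilb}, $\Ehr(\M_{pq};z) = \Hilb(\k[S(\M_{pq})];z)$. Theorem~\ref{thm:Stanley decomp} gives a Stanley decomposition of $S(\M_{pq})$, so~\eqref{Hilb k[S] with Stanley} applies. It expresses the Hilbert series as a sum, over $(\al,\be)\in\O_{pq}$ and $F\in\F(\al,\be)$, of $z^{\deg a}/\prod_{b\in B}(1-z^{\deg b})$. Here $a$ is the Stanley space's shift vector and $B$ its basis.

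The next step is to compute these degrees using~\eqref{deg ABCD}. Since $\deg$ is additive and $\1_Y$ is a sum over a set, we get
\[
\deg \1_{\sA(\cor(\al))\cup\sB(\cor(\be))\cup\sC(\res(F))} = \sum_{(i,j)\in\cor(\al)} ij + \sum_{(i,j)\in\cor(\be)} ij + q\,|\res(F)| = \d(\al,\be) + q|\res(F)|.
\]
There is one point to check: the sets $\sA(\cor(\al))$, $\sB(\cor(\be))$ and $\sC(\res(F))$ are pairwise disjoint, and the maps are injective. This holds because $\iota$ is injective by Lemma~\ref{lemma:rays}, since these are distinct extreme-ray generators. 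So the indicator really is the plain sum of the listed elements, with no collapsing.

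Likewise, the basis $\sA(\al)\cup\sB(\be)\cup\sC(F)\cup\sD([q])$ is a disjoint union, again by injectivity of $\iota$. Its denominator therefore factors as
\[
\prod_{(i,j)\in\al}(1-z^{ij})\prod_{(i,j)\in\be}(1-z^{ij})\cdot(1-z^q)^{|F|}\cdot(1-z^p)^q.
\]

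Finally, we factor out $(1-z^p)^{-q}$, which is common to all terms. For fixed $(\al,\be)$ we also factor out $z^{\d(\al,\be)}$ and the $\al,\be$ denominators, since these do not depend on $F$. What remains inside the parentheses is the sum over $F\in\F(\al,\be)$ of $z^{q|\res(F)|}/(1-z^q)^{|F|}$, which is exactly the stated formula.

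None of these steps is a real obstacle. The only care needed is the disjointness and injectivity bookkeeping, which ensures the degree of $\1_Y$ and the product over $B$ split as claimed.
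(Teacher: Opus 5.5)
Your proposal is correct and is essentially the paper's own proof. Both identify $\Ehr$ with $\Hilb$ via \eqref{Ehr Hilb}, apply \eqref{Hilb k[S] with Stanley} to Theorem~\ref{thm:Stanley decomp}, compute the numerator and denominator degrees from \eqref{deg ABCD}, and pull out the factors that do not depend on $F$. Your extra injectivity check is bookkeeping the paper leaves implicit, and it is not actually needed: the shift vector is $\pi(\1_{\cor(\al)\sqcup\cor(\be)\sqcup\res(F)})$, whose degree is additive over that index set regardless (in fact $\iota$ fails to be injective when $p=q=1$, where $\sC(1)=\sD(1)$, but then $F=\emptyset$, so nothing changes).
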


\begin{proof}
    By~\eqref{Ehr Hilb},
    \[
        \Ehr(\M_{pq}) = \Hilb(\k[S(\M_{pq})];z).
    \]
    In turn, by~\eqref{Hilb k[S] with Stanley}, the Stanley decomposition in Theorem~\ref{thm:Stanley decomp} yields
    \begin{equation}
        \label{Hilb without degrees}
        \Hilb(\k[S(\M_{pq})];z) = \sum_{(\al, \be) \in \O_{pq}} \;\; \sum_{F \in \F(\al,\be)} \frac{z^{\deg \1_{\sA(\cor(\al)) \: \cup \: \sB(\cor(\be)) \: \cup \: \sC(\res(F))}}}{\prod_{M \in \sA(\al) \: \cup \: \sB(\be) \: \cup \: \sC(F) \: \cup \: \sD([q])} (1-z^{\deg M})}.
    \end{equation}
    Since degree in $S(\M_{pq})$ is additive, and by~\eqref{deg ABCD},
    \begin{align}
        \deg \1_{\sA(\cor(\al)) \: \cup \: \sB(\cor(\be)) \: \cup \: \sC(\res(F))} &= \sum_{(i,j) \in \cor(\al)} \underbrace{\deg \sA(i,j)}_{ij} + \sum_{(i,j) \in \cor(\be)} \underbrace{\deg \sB(i,j)}_{ij} + \sum_{i \in \res(F)} \underbrace{\deg \sC(i)}_{q} \nonumber \\
        & = \d(\al, \be) + q \left| \res(F) \right|. \label{numerator degree}
    \end{align}
In the same way,
    \begin{align}
        & \phantom{{} = {}}  \prod\nolimits_{M \in \sA(\al) \: \cup \: \sB(\be) \: \cup \: \sC(F) \: \cup \: \sD([q])} \: (1-z^{\deg M}) \nonumber \\[1ex]
        &= \prod_{\mathclap{(i,j) \in \al}} (1-z^{\deg \sA(i,j)}) \cdot \prod_{\mathclap{(i,j) \in \be}} (1-z^{\deg \sB(i,j)}) \cdot \prod_{i \in F} (1-z^{\deg \sC(i)}) \cdot \prod_{j \in [q]} (1-z^{\deg \sD(j)}) \nonumber \\
        & = \prod_{\mathclap{(i,j) \in \al}} (1-z^{ij}) \cdot \prod_{\mathclap{(i,j) \in \be}} (1-z^{ij}) \cdot \prod_{i \in F} (1-z^{q}) \cdot \prod_{j \in [q]} (1-z^{p}) \nonumber \\
        &= \prod_{\mathclap{(i,j) \in \al}} (1-z^{ij}) \cdot \prod_{\mathclap{(i,j) \in \be}} (1-z^{ij}) \cdot (1-z^{q})^{|F|} \cdot (1-z^{p})^q. \label{denominator degree}
    \end{align}
The result follows, upon substituting~\eqref{numerator degree} and~\eqref{denominator degree} in the numerator and denominator (respectively) of~\eqref{Hilb without degrees}, and factoring out the factors that are independent of $F$.
\end{proof}

See Appendix~\ref{appendix:3 by 3} for the complete term-by-term details in Corollary~\ref{cor:Ehrhart series} in the special case $p=q=3$.

\section{Volume of the Monge polytope}
\label{sec:main results}

This final section establishes the volume formula previewed in Theorem~\ref{thm:volume in intro}.
Our formula is a sum over certain lattice paths known as \emph{Delannoy paths};
for details about the history and applications of these paths, see the expository article~\cite{BanderierSchwer} and the references therein.
Given a partition $\al = (\al_1, \ldots, \al_\ell)$, we recall the standard shorthand
\[
    \al! \coloneqq \al_1! \cdots \al_\ell!.
\]
We also recall that $\al'$ denotes the \emph{conjugate partition} of $\al$, defined via
\[
    \al'_j = \left| \{ i : \al_i \geq j \} \right|,
\]
or more intuitively from the viewpoint~\eqref{partition as set}, $\al'$ is obtained from $\al$ via $(i,j) \mapsto (j,i)$.

\begin{dfn}\
    \label{def:Delannoy}

    \begin{enumerate}

    \item A \emph{Delannoy  path} in $[p] \times [q]$ is a path from $(1,1)$ to $(p,q)$ formed by steps 
    \[
    \downarrow \, \coloneqq (1,0), \qquad \rightarrow \: \coloneqq (0,1), \qquad \searrow \: \coloneqq (1,1).
    \]
    Although strictly speaking a Delannoy path $D$ is a subset of $[p] \times [q]$, we will sometimes refer to $D$ as if it were a sequence of steps. 
    Let
    \[
        \D_{pq} \coloneqq \Big\{ \text{Delannoy paths in $[p] \times [q]$} \Big\}.
    \]
    
    \item \label{DZ definition} A Delannoy path is said to be \emph{Z-avoiding} if it does not contain a subpath of the form
    \[
        \rightarrow \underbrace{\downarrow \cdots \downarrow}_{\mathclap{\substack{\text{nonempty run} \\ \text{of $\downarrow$'s}}}} \rightarrow,
    \]
    since such a subpath resembles a ``Z'' shape.
    Let
    \[
    \DZ_{pq} \coloneqq 
    \Big\{ D \in \D_{pq} : 
    \text{$D$ is Z-avoiding}
    \Big\}.
    \]
    
    \item \label{dd(D)} 
    Given $D \in \D_{pq}$, let
    \[
        \dd(D) \coloneqq \text{the number of $\searrow$'s in $D$}.
    \]
    
    \item \label{Conn(D)} Given $D \in \D_{pq}$, let
    \[
        \Conn(D) \coloneqq \Big\{ \text{connected components $C \subseteq D$} \Big\},
    \]
    where the term \emph{connected} refers to the grid graph on $[p] \times [q]$.
    That is, two points in $D$ are adjacent if and only if they differ by $\pm( 1,0)$ or $\pm(0, 1)$.
    Consequently,
    \[
        \left| \Conn(D) \right| = \dd(D) + 1.
    \]

    \item \label{vv(D)} Let $D \in \D_{pq}$.
    We say that $V \in \Conn(D)$ is a \emph{vertical component} if $V = [a,b] \times \{j\}$.
    (Equivalently, $V$ is connected by a sequence of $\downarrow$'s which is not adjacent to a $\rightarrow$ on either side.)
    Let
    \[
        \vv(D) \coloneqq \prod_{\substack{\text{vert.~comps.} \\ V \in \Conn(D)} } |V|.
    \]

    \item \label{ff(D)} Given $D \in \D_{pq}$, note that its complement $\overline{D}$ in $[p] \times [q]$  equals $\Fr(\al,\be)$ for some $(\al,\be) \in \O_{pq}$.
    Let
    \[
        \ff(D) \coloneqq \al! \al'! \be! \be'!.
    \]
    \end{enumerate}
\end{dfn}

\begin{theorem}[Restatement of Theorem~\ref{thm:volume in intro}]
    \label{thm:volume in body}
    The normalized volume~\eqref{Vol} of the Monge polytope is given by
    \[
    \Vol(\M_{pq}) = \frac{1}{p^q q^{p-1}} \sum_{D \in \DZ_{pq}} q^{\dd(D)} \: \frac{\vv(D)}{\ff(D)},
    \]
    with all notation as in Definition~\ref{def:Delannoy}.
\end{theorem}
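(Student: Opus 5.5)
The plan is to combine Lemma~\ref{lemma:Kdim and e are dim and vol} with Lemma~\ref{lemma:Stanley decomp gives Kdim and e}, applied to the Stanley decomposition of Theorem~\ref{thm:Stanley decomp}.

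\emph{Reduction to a sum over maximal Stanley spaces.} By~\eqref{dim Mpq} and Lemma~\ref{lemma:Kdim and e are dim and vol}, we have $\Kdim \k[S(\M_{pq})] = pq$ and $\Vol(\M_{pq}) = e(\k[S(\M_{pq})])$. The Stanley space indexed by $(\al,\be,F)$ has $|\al|+|\be|+|F|+q$ generators. By~\eqref{deg ABCD}, the product of their degrees is
\[
\prod_{(i,j)\in\al} ij \cdot \prod_{(i,j)\in\be} ij \cdot q^{|F|}\, p^q .
\]
Multiplying $i$ over the cells of $\al$ column by column gives $\prod_j \al'_j!$, and multiplying $j$ over the cells row by row gives $\prod_i \al_i!$. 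Hence $\prod_{(i,j)\in\al} ij = \al!\,\al'!$, and likewise for $\be$, so the product of degrees is $\ff\cdot q^{|F|}p^q$. Lemma~\ref{lemma:Stanley decomp gives Kdim and e}\ref{e = sum over max} then gives
\[
\Vol(\M_{pq}) = \frac{1}{p^q}\sum \frac{1}{\al!\al'!\be!\be'!\,q^{|F|}},
\]
summed over triples with $|\al|+|\be|+|F|+q = pq$.

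\emph{Combinatorial core.} Write $U = \overline{\Fr(\al,\be)}$ for the unshaded set, so that $|U| = pq-|\al|-|\be| = \sum_j |N_j|$. The maximality condition becomes $|F| = |U|-q$. Every facet satisfies $|F| = p-\tau$, where $\tau$ is the size of a minimal transversal of $\NN(\al,\be)$ (proof of Lemma~\ref{lemma:F and R}).

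I will show that $\sum_j(|N_j|-1) \le p-\tau$ always holds. Equality then holds exactly when $U$ is a Z-avoiding Delannoy path $D$ and the transversal is a minimum one. In that case $|U| = p+q-1-\dd(D)$, so $|F| = p-1-\dd(D)$ and $q^{-|F|} = q^{\dd(D)}/q^{p-1}$. This matches the prefactor and the factor $q^{\dd(D)}$ in the statement.

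To prove the inequality, I think of $U$ as the region between the two osculating boundary paths, one interval $N_j$ in each column, with $\min N_j$ and $\max N_j$ weakly increasing in $j$. I would like to argue that the intervals $N_{j_1},\dots,N_{j_\ell}$ admit a transversal of size at most $p-\sum_j(|N_j|-1)$. The idea is to charge each extra cell of a column, i.e.\ each vertical overlap, against a row that a greedy transversal can skip. If $U$ is not a path, some $2\times 2$ block or some column is too thick, and this should give strict inequality.

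For the Z-avoiding condition, I need to check that a Z pattern $\rightarrow\downarrow\cdots\downarrow\rightarrow$ makes the middle column's $N_j$ one of the minimal nonfaces, strictly inside a span that forces an extra transversal element, so that the bound is strict. Conversely, for a Z-avoiding path, the minimal nonfaces that matter should be exactly the vertical components together with single-row "hinges" at the diagonal steps. The hinges then contribute forced choices, and a vertical component $V$ contributes $|V|$ free choices.

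\emph{Counting facets.} Once the characterization is in place, the number of maximal-size facets equals the number of minimum transversals of $\NN(\al,\be)$. Under Z-avoidance these intervals split into independent blocks, so the count is the product of the block sizes, namely $\prod_V |V| = \vv(D)$. Summing over $D\in\DZ_{pq}$ then yields the formula.

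The main obstacle is the middle step: proving the inequality $\sum_j(|N_j|-1)\le p-\tau$ together with its exact equality cases in terms of Z-avoidance. I would attack it by induction on $q$, deleting the last column and tracking how $\NN(\al,\be)$ and $\tau$ change, using the recursive description of the $j_i$'s from Construction~\ref{const:F and R}.
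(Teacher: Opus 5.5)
The overall route is the paper's. Your reduction to maximal Stanley spaces is exactly Lemma~\ref{lemma:vol as nice sum}, and the equality case and count you aim for match Lemma~\ref{lemma:F tilde}.

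\textbf{The key inequality is reversed.} The claim $\sum_j(|N_j|-1)\le p-\tau$ is false. Take $p=q=2$ and $\al=\be=\emptyset$: then $N_1=N_2=[1,2]$, so $\sum_j(|N_j|-1)=2$, while both facets $\{1\},\{2\}$ give $p-\tau=1$. In general your inequality says $|\al|+|\be|+|F|+q\ge pq$ for every facet, i.e.\ that every Stanley space is maximal. Consequences:
\begin{itemize}
\item Your plan to exhibit a transversal of size at most $p-\sum_j(|N_j|-1)$, by charging overlaps to skipped rows, is aimed at a false statement.
\item Your heuristic that a $2\times 2$ block ``gives strict inequality'' only makes sense for the reverse inequality, since thickening $U$ increases the left side.
\item The correct inequality $p-\tau\le\sum_j(|N_j|-1)$ is just $|\al|+|\be|+|F|+q\le pq$, which you already have from the Krull dimension.
\end{itemize}
So the work is not an inequality but its equality analysis. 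You must show that no facet attains $|F|=|U|-q$ when $U$ contains a $2\times2$ square or a Z, and that otherwise exactly $\vv(D)$ facets attain it. Your sketch of this is only heuristic. Also, the forced rows sit at the $\rightarrow$-runs inside a component, not at the diagonal steps; the diagonal steps are what separate components.

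\textbf{A short argument that fills the gap} (it replaces your induction on $q$):
\begin{itemize}
\item For $j<q$, put $g_j=\min N_{j+1}-\max N_j$. Osculation gives $g_j\le 1$. Telescoping with $\min N_1=1$ and $\max N_q=p$ gives $\sum_j(|N_j|-1)=p-1-\sum_{j<q}g_j$.
\item Each $j$ with $g_j=1$ splits the columns into row-disjoint blocks. Every transversal of $\NN(\al,\be)$ must meet each block's rows, because each $N_j$ contains a minimal nonface. Hence $\tau\ge 1+\#\{j:g_j=1\}\ge 1+\sum_{j<q}g_j$, which is the true inequality.
\item Equality forces $g_j\in\{0,1\}$ for all $j$. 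Equivalently, $U$ has no $2\times2$ square, so $U=D$ is a Delannoy path whose blocks are its components.
\item Equality also forces exactly one transversal element $x$ per component, and $x$ must lie in every $N_j$ of that component.
\item A Z makes this impossible: the columns just before and after its $\downarrow$-run are row-disjoint.
\item In a Z-avoiding component, the common rows are all of $V$ for a vertical component, and the single row of the $\rightarrow$-run otherwise. Any such choice, one per component, is a minimal transversal.
\end{itemize}
This yields exactly $\vv(D)$ facets, each with $|F|=p-1-\dd(D)$. This is what the paper's Table~\ref{table:Conn(D)} records component by component.
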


Before giving the proof, we provide a brief example below of a single term corresponding to a path $D \in \DZ_{pq}$.
We will then prove a series of three lemmas leading up to the proof of Theorem~\ref{thm:volume in body}.

\begin{example}
    \label{ex:Delannoy}
    As in Example~\ref{ex:big example}, let $p=10$ and $q=15$.
    A typical path $D \in \DZ_{pq}$ is shown below on the left-hand side;
    on the right-hand side we depict its complement, which is the frame corresponding to a unique pair $(\al,\be) \in \O_{pq}$:

    \[
\begin{tikzpicture}[scale=.3]
    
    \foreach \x in {1,...,10}{\foreach \y in {1,...,15}{\node [dot] at (\y,\x) {};}}

    \draw [very thick]
    (1,10) -- ++(0,-1) -- ++(1,-1) --++ (0,-1) -- ++(5,0) -- ++(0,-1) -- ++(1,-1) -- ++(3,0) -- ++(1,-1) -- ++(0,-2) -- ++(1,-1) -- ++(2,0) ;

    \node [left] at (0,5.5) {$D \; = $};
    
\end{tikzpicture}
\qquad
\begin{tikzpicture}[scale=.3]
    
    \fill[lightgray]
    (15.5,10.5) -- ++(0,-9) -- ++(-3,0) -- ++(0,3) -- ++(-1,0) -- ++(0,1) -- ++(-4,0) -- ++(0,2) -- ++(-5,0) -- ++(0,1) -- ++(-1,0) -- ++(0,2) -- cycle;
    
    \fill[lightgray]
    (0.5,0.5) -- ++(0,8) -- ++(1,0) -- ++(0,-2) -- ++(5,0) -- ++(0,-1) -- ++(1,0) -- ++(0,-1) -- ++(4,0) -- ++(0,-3) -- ++(1,0) -- ++(0,-1) -- cycle;
    
    \foreach \x in {1,...,10}{\foreach \y in {1,...,15}{\node [dot] at (\y,\x) {};}}

    \draw [thick, densely dotted]
    (1,10) -- ++(0,-1) -- ++(1,-1) --++ (0,-1) -- ++(5,0) -- ++(0,-1) -- ++(1,-1) -- ++(3,0) -- ++(1,-1) -- ++(0,-2) -- ++(1,-1) -- ++(2,0) ;

    \node [left] at (0,5.5) {$\leadsto \qquad \overline{D} = \Fr(\al,\be) \; = $};
    
\end{tikzpicture}
\]

    \noindent We observe that $D$ contains four $\searrow$'s, and hence
    \begin{equation}
        \label{diag in example}
        \dd(D) = 4.
    \end{equation}
    Consequently, $\left|\Conn(D) \right| = 4+1=5$; the five connected components of $D$ are precisely the components separated by the $\searrow$'s.
    (These are easier to see in the picture of $\overline{D}$ above, where the five unshaded regions are the five connected components of $D$.)
    Of these five connected components, exactly two of them are vertical components, namely those in columns 1 and 12.
    Since the vertical components have cardinality 2 and 3, respectively,
    \begin{equation}
        \label{vert in example}
        \vv(D) = 2 \cdot 3 = 6.
    \end{equation}
    From $\overline{D}$ we read off
    \begin{align*}
        \al &= (14,14,13,8,8,4,3,3,3), & \be &= (12,11,11,11,7,6,1,1), \\
        \al' &= (9,9,9,6,5,5,5,5,3,3,3,3,3,2), & \be' &= (8,6,6,6,6,6,5,4,4,4,4,1),
    \end{align*}
    and hence
    \begin{equation}
        \label{fact in example}
        \ff(D) = \underbrace{(14!)^2 13! (8!)^2 4! (3!)^3}_{\al!} \underbrace{(9!)^3 6! (5!)^4 (3!)^5 2!}_{\al'!} \underbrace{12! (11!)^3 7! 6! (1!)^2}_{\be!} \underbrace{8! (6!)^5 5! (4!)^4 1!}_{\be' !}.
    \end{equation}
    Thus, combining~\eqref{diag in example}--\eqref{fact in example}, the path $D$ contributes the following quantity to the sum in Theorem~\ref{thm:volume in body}:
    \begin{align*}
        q^{\dd(D)} \frac{\vv(D)}{\ff(D)} &= 15^4 \cdot \frac{6}{\ff(D)} \\
        & = 1 \Big/ \left(  \begin{array}{l}
        50026022032502780182578701547632172929813264067209498571 \\
        793990608255340797927573447631273361238307318530048 \times 10^{29}
        \end{array}
        \right).
    \end{align*}
    (The astronomically large denominator is the price paid for having chosen $p$ and $q$ large enough to give an interesting path $D$ for this example.
    We emphasize that this example is meant purely to familiarize the reader with Definition~\ref{def:Delannoy}; in practical computation of Theorem~\ref{thm:volume in body}, as described in the introduction, one can use dynamic programming to avoid performing these calculations on individual paths $D \in \DZ_{pq}$.)
\end{example}

In stating and proving our remaining lemmas, it will be convenient to work with the complement of $\Fr(\al,\be)$;
we call this the \emph{picture} corresponding to $(\al,\be)$, which we denote by
\begin{equation}
    \label{Pic definition}
    \Pic(\al,\be) \coloneqq ([p] \times [q]) \setminus \Fr(\al,\be).
\end{equation}
Given $F \in \F(\al,\be)$, we will denote its complement by $\overline{F} \coloneqq [p] \setminus F$.

\begin{lemma}
    \label{lemma:vol as nice sum}
    We have
    \[
        \Vol(\M_{pq}) = \frac{1}{p^q q^p} \sum_{(\al,\be) \in \O_{pq}} \Bigg(
        \sum_{F \in \widetilde{\F}(\al,\be)}
        \frac{q^{|\overline{F}|}}{\al!\al'!\be!\be'!}
        \Bigg),
    \]
    where
    \[
        \widetilde{\F}(\al,\be) \coloneqq \Big\{ F \in \F(\al,\be) : |\overline{F}| = p+q - \left| \Pic(\al,\be) \right| \Big\}.
    \]
\end{lemma}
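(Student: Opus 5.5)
The plan is to combine Lemma~\ref{lemma:Kdim and e are dim and vol}\ref{e = Vol} with Lemma~\ref{lemma:Stanley decomp gives Kdim and e}, applied to the Stanley decomposition of Theorem~\ref{thm:Stanley decomp}. By~\eqref{dim Mpq} and Lemma~\ref{lemma:Kdim and e are dim and vol}\ref{Kdim = dim P + 1}, the Krull dimension is $d = pq$. Hence, by Lemma~\ref{lemma:Stanley decomp gives Kdim and e}\ref{e = sum over max}, $\Vol(\M_{pq})$ equals the sum, over Stanley spaces indexed by $(\al,\be,F)$ whose generating set $\sA(\al) \cup \sB(\be) \cup \sC(F) \cup \sD([q])$ has exactly $pq$ elements, of the reciprocal of the product of the degrees of the generators. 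These four pieces are disjoint: the $\iota(x)$ are distinct by Lemma~\ref{lemma:rays}. So the size of the generating set is $|\al| + |\be| + |F| + q$.

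First, translate the cardinality condition. Since $\Pic(\al,\be)$ is the complement of $\Fr(\al,\be) = \NE{\al}\cup\SW{\be}$, and these are disjoint because $(\al,\be)$ is osculating, we have $|\al|+|\be| = pq - |\Pic(\al,\be)|$. The condition $|\al|+|\be|+|F|+q = pq$ then becomes $|F| = |\Pic(\al,\be)| - q$, equivalently $|\overline{F}| = p + q - |\Pic(\al,\be)|$. This is exactly the defining condition of $\widetilde{\F}(\al,\be)$.

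Second, compute the degree products using~\eqref{deg ABCD}. The $\sD$ factors contribute $p^q$. The $\sC(F)$ factors contribute $q^{|F|} = q^{p-|\overline{F}|}$, which gives the overall $q^{-p}$ together with the factor $q^{|\overline{F}|}$ in the numerator. The $\sA(\al)$ factors contribute $\prod_{(i,j)\in\al} ij$, and similarly for $\be$.

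The one genuine identity needed is
\[
\prod_{(i,j)\in\al} ij = \al!\,\al'!.
\]
To see it, split the product as $\prod_{(i,j)\in\al} j \cdot \prod_{(i,j)\in\al} i$. For fixed row $i$, the $j$ range over $1,\ldots,\al_i$, so the first factor is $\prod_i \al_i! = \al!$. For fixed column $j$, the $i$ range over $1,\ldots,\al'_j$, so the second factor is $\al'!$. The same argument gives $\be!\,\be'!$ for $\be$.

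Assembling these pieces yields the stated formula. There is no real obstacle here. The only point needing care is to confirm that $pq$ is indeed the maximum generating-set size, which is already guaranteed by Lemma~\ref{lemma:Stanley decomp gives Kdim and e}\ref{Kdim = max} combined with the Krull dimension computation. It follows that only terms with $F \in \widetilde{\F}(\al,\be)$ survive; if $\widetilde{\F}(\al,\be)$ is empty for some pair, that pair simply contributes nothing.
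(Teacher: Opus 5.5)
Your proposal is correct and follows the paper's proof essentially step for step. You get Krull dimension $pq$ from $\dim \M_{pq} = pq-1$, take the multiplicity as the sum over the maximal Stanley spaces of Theorem~\ref{thm:Stanley decomp}, rewrite $|\al|+|\be|+|F|+q = pq$ as $|\overline{F}| = p+q-|\Pic(\al,\be)|$, and evaluate $\prod_{(i,j)\in\al} ij = \al!\,\al'!$ by splitting into row and column factors. Your remarks on the distinctness of the generators and the disjointness of $\NE{\al}$ and $\SW{\be}$ only make explicit what the paper uses tacitly.
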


\begin{proof}
    Recall from~\eqref{dim Mpq} that $\dim \M_{pq} = pq-1$.
    Thus by Lemma~\ref{lemma:Kdim and e are dim and vol}\ref{Kdim = dim P + 1},
    \begin{equation}
        \label{Kdim = pq}
        \Kdim \k[S(\M_{pq})] = pq.
    \end{equation}
    By Lemma~\ref{lemma:Kdim and e are dim and vol}\ref{e = Vol},
    \begin{equation}
        \label{Vol = e in proof}
        \Vol(\M_{pq}) = e( \k[S(\M_{pq})]).
    \end{equation}
    In turn, applying Lemma~\ref{lemma:Stanley decomp gives Kdim and e}\ref{e = sum over max} to the Stanley decomposition in Theorem~\ref{thm:Stanley decomp} yields
    \begin{equation}
        \label{e in special case}
        e( \k[S(\M_{pq})]) = \sum_{(\al,\be) \in \O_{pq}} \Bigg( \sum_{F} \frac{1}{\prod_{(i,j) \in \al} \deg \sA(i,j) \prod_{(i,j) \in \be} \deg \sB(i,j) \prod_{i \in F} \deg \sC(i) \prod_{j \in [q]} \deg \sD(j)} \Bigg),
    \end{equation}
    where, using~\eqref{Kdim = pq}, the inner sum in~\eqref{e in special case} ranges over the set
    \begin{equation}
        \label{biggest F's}
        \Big\{ F \in \F(\al,\be) : |\al| + |\be| + |F| + q = pq \Big\}.
    \end{equation}
    Rearranging the defining condition in~\eqref{biggest F's}, we have
    \begin{align*}
        |F| & =  
        \underbrace{pq - (|\al| + |\be|)}_{\left| \Pic(\al,\be) \right|} {} - q \\
        p - |F| &= p - \Big( \left|\Pic(\al,\be)\right| - q \Big) \\
       |\overline{F}| &= p+q - \left|\Pic(\al,\be) \right|,
    \end{align*}
    which is the defining condition of the set $\widetilde{\F}(\al,\be)$ in the statement of the lemma.
    Hence the inner sum in~\eqref{e in special case} ranges over all $F \in \widetilde{\F}(\al,\be)$.
    Finally, using~\eqref{deg ABCD} to supply the degrees in~\eqref{Vol = e in proof}--\eqref{e in special case}, we have
        \begin{align*}
        \Vol(\M_{pq}) &= e( \k[S(\M_{pq})]) \\
        &= \sum_{(\al,\be) \in \O_{pq}} \Bigg( \sum_{F \in \widetilde{\F}(\al,\be)} \frac{1}{\prod_{(i,j) \in \al} ij \prod_{(i,j) \in \be} ij \prod_{i \in F} q \prod_{j \in [q]} p} \Bigg) \\
        &= \sum_{(\al,\be) \in \O_{pq}} \Bigg( \sum_{F \in \widetilde{\F}(\al,\be)} \frac{1}{ \underbrace{\textstyle \prod_{i}\left( \prod_{j=1}^{\al_i} j \right) \cdot \prod_{j}\left( \prod_{i=1}^{\al'_j} i \right)}_{ \prod_{(i,j) \in \al} ij} \cdot
        \underbrace{\textstyle \prod_{i}\left( \prod_{j=1}^{\be_i} j \right) \cdot \prod_{j}\left( \prod_{i=1}^{\be'_j} i \right)}_{ \prod_{(i,j) \in \be} ij} {} \cdot q^{|F|} \cdot p^q} \Bigg) \\
        &= \frac{1}{p^q}  \sum_{(\al,\be) \in \O_{pq}} \Bigg( \sum_{F \in \widetilde{\F}(\al,\be)} \frac{1}{\al! \al'! \be! \be'!} \cdot \frac{q^p}{q^{|F|}} \cdot \frac{1}{q^p} \Bigg) \\
        & = \frac{1}{p^q q^p}  \sum_{(\al,\be) \in \O_{pq}} \Bigg( \sum_{F \in \widetilde{\F}(\al,\be)} \frac{q^{|\overline{F}|}}{\al! \al'! \be! \be'!} \Bigg). \qedhere
    \end{align*}
\end{proof}

Next we aim at obtaining an explicit description of the sets $\widetilde{\F}(\al,\be)$.
It will turn out (Lemma~\ref{lemma:F tilde}) that the nonempty $\widetilde{\F}(\al,\be)$'s are in bijective correspondence with the paths in $\DZ_{pq}$.

\begin{lemma}
    \label{lemma:F tilde}
    Let $(\al,\be) \in \O_{pq}$, and let $\widetilde{\F}(\al,\be)$ be the set defined in Lemma~\ref{lemma:vol as nice sum}.
    \begin{enumerate}[label=\textup{(\arabic*)},ref=\arabic*]
        \item \label{only DZ} $\widetilde{\F}(\al,\be) \neq \emptyset$ if and only if $\Pic(\al,\be) \in \DZ_{pq}$.
        
        \item \label{F tilde part 2} If $\Pic(\al,\be) = D \in \DZ_{pq}$, then
        \begin{enumerate}[label=\textup{(\alph*)},ref=\alph*]

        \item \label{size Fbar = diag + 1} $|\overline{F}|
            = \dd(D) + 1$, for all $F \in \widetilde{\F}(\al,\be)$.

            \item \label{size F tilde = vert} $\left| \widetilde{\F}(\al,\be) \right| = \vv(D)$.
            
        \end{enumerate}
    \end{enumerate}
\end{lemma}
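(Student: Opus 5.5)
The plan is to make Lemma~\ref{lemma:F and R} concrete. Facets $F \in \F(\al,\be)$ are exactly the complements of minimal transversals $X=\overline F$ of the interval family $\NN(\al,\be)=\{N_{j_1},\dots,N_{j_\ell}\}$. So $\widetilde\F(\al,\be)$ is the set of minimal transversals of size exactly $p+q-|\Pic(\al,\be)|$.

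First I would record the a priori bound. Since $\Kdim \k[S(\M_{pq})]=pq$ is the maximum of $|B_i|$ (Lemma~\ref{lemma:Stanley decomp gives Kdim and e}\ref{Kdim = max}), every Stanley space satisfies $|\al|+|\be|+|F|+q\le pq$. Equivalently, $|\overline F|\ge p+q-|\Pic(\al,\be)|$ for every facet. I would also reprove this directly from the column structure, because the equality analysis comes out of that argument. The columns $N_j=[s_j,e_j]$ of $\Pic(\al,\be)$ have $s_j,e_j$ weakly increasing in $j$, with $s_1=1$, $e_q=p$, and every row covered. Writing $|\Pic|=\sum_j(e_j-s_j+1)$ and telescoping $p-1=\sum_j(e_j-e_{j-1})+(e_1-s_1)$ gives
\[
|\Pic(\al,\be)|-(p+q-1)=\sum_{j\ge2}(e_{j-1}-s_j),
\]
and each term $e_{j-1}-s_j\ge -1$. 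I would then bound $|X|-1$ from below by the number of indices $j$ with $e_{j-1}-s_j=-1$, i.e.\ the places where consecutive columns do not touch. The reason is that two non-touching adjacent columns force $X$ to use distinct elements for the minimal nonfaces on either side. Overlaps of size $\ge1$ contribute nonnegatively on the $\Pic$ side, so the combined inequality $|X|+|\Pic|\ge p+q$ follows.

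Second, I would characterize equality. Every slack term must vanish. First, every $e_{j-1}-s_j\in\{-1,0\}$, meaning consecutive columns share at most one row; I would show this is exactly the condition that $\Pic(\al,\be)$ is a Delannoy path $D$, with a $\searrow$ step precisely where $e_{j-1}-s_j=-1$. Second, $X$ must use exactly one element per connected component of $D$, i.e.\ $|X|=\dd(D)+1$, which is part~(\ref{size Fbar = diag + 1}). Next I would check, using the construction of $j_1,\dots,j_\ell$ via $\al$- and $\be$-columns, that each component of $D$ contains at least one minimal nonface, and that a single element can hit all minimal nonfaces of a component iff those intervals have a common point.

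This is where Z-avoidance enters. A component containing $\rightarrow\downarrow\cdots\downarrow\rightarrow$ yields two minimal nonfaces inside it: the one-row interval before the vertical run and the one-row interval after it. These are disjoint, so two elements of $X$ are needed in that component and equality fails. Conversely, a component without a Z has all its minimal nonfaces sharing a common row. Together these give part~(\ref{only DZ}).

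Finally, for part~(\ref{size F tilde = vert}) I would count the choices component by component, checking that the choices are independent and that each resulting transversal is minimal. A component containing a $\rightarrow$ has, among its minimal nonfaces, a singleton row interval, which forces the choice. A vertical component $V=[a,b]\times\{j\}$ has $N_j=[a,b]$ as its only minimal nonface, giving $|V|$ free choices. The product is $\vv(D)$.

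I expect the main obstacle to be the middle step: correctly matching the minimal nonfaces $N_{j_i}$ from Construction~\ref{const:F and R} to the components of $D$, and confirming that a Z always produces two disjoint minimal nonfaces within a single component. I would handle it by a careful case check on which columns are $\al$- and $\be$-columns at the two ends of each horizontal run.
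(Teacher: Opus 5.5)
Your overall strategy is sound, and for the key inequality it takes a different route from the paper. The paper first assumes $\Pic(\al,\be)=D$ is a Delannoy path and uses $|D|=p+q-|\Conn(D)|$. It then reads off each component's contribution to $|\overline F|$ from a case table: vertical run, longer horizontal run, single $\rightarrow$ step, and Z. Non-Delannoy pictures are dismissed by an informal remark that completing a $2\times 2$ square raises $|\Pic(\al,\be)|$ without lowering $|\overline F|$.

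Your identity $|\Pic(\al,\be)|-(p+q-1)=\sum_{j\ge 2}(e_{j-1}-s_j)$ works together with the bound $|\overline F|\ge 1+\#\{j : s_j=e_{j-1}+1\}$. This proves $|\overline F|+|\Pic(\al,\be)|\ge p+q$ for every facet and every osculating pair. Its equality case yields Delannoy-ness and part~(2a) simultaneously, which is a real gain in rigor over the paper's treatment of the non-Delannoy case. For the block bound you only need that each $N_j$ contains a minimal nonface. That nonface is some $N_{j'}\subseteq N_j$ and therefore lies in the same row-disjoint block, so the recursion on $\al$- and $\be$-columns is not needed.

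However, two of your local claims about minimal nonfaces are false as stated, although the conclusions you draw from them are true.

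\emph{First}, a component containing a $\rightarrow$ need not have a singleton minimal nonface. For $p=3$, $q=2$ and $D=\downarrow\rightarrow\downarrow$, the minimal nonfaces are $N_1=[1,2]$ and $N_2=[2,3]$. The choice is forced to $2$ only because $N_1\cap N_2=\{2\}$; this is the third row of the paper's table.

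\emph{Second}, the columns flanking the vertical run of a Z need not be one-row intervals. For $p=4$, $q=3$ and $D=\downarrow\rightarrow\downarrow\rightarrow\downarrow$, the minimal nonfaces are $[1,2]$, $[2,3]$, $[3,4]$. What is true is that $\max N_{j-1}=a<b=\min N_{j+1}$, so the minimal nonfaces inside these two columns are disjoint.

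Both points have the same fix. A set meets every minimal nonface of a component $C$ if and only if it meets every $N_j$ with $j$ a column of $C$. Suppose $C$ occupies columns $j_0\le j\le j_1$. Since $s_{j+1}=e_j$ inside a component, $\bigcap_j N_j=[s_{j_1},e_{j_0}]$.
\begin{itemize}
\item For $j_0=j_1$ this is $N_{j_0}$, of size $|V|$.
\item For $j_0<j_1$ it equals $[e_{j_1-1},e_{j_0}]$. This is nonempty exactly when $e_{j_0}=\cdots=e_{j_1-1}$, i.e.\ when $C$ contains no Z, and it is then the single row $\{e_{j_0}\}$.
\end{itemize}
This gives parts~(1) and~(2b) directly, and removes the case check on $\al$- and $\be$-columns that you flagged as the main obstacle.
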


\begin{proof}

We will use the following observations, which are immediate from Construction~\ref{const:F and R}:
        \begin{itemize}
            \item If two minimal nonfaces $N, N' \in \NN(\al,\be)$ arise from distinct connected components of $\Pic(\al,\be)$, then $N \cap N' = \emptyset$.
            \item Each connected component $C \in \Conn(\Pic(\al,\be))$ contributes to $\NN(\al,\be)$ precisely those $N_j$'s which are minimal (with respect to inclusion, and ignoring duplicates) among those arising from $C$.

            \item If some minimal nonface $N \in \NN(\al,\be)$ is disjoint from all the others, then it contributes exactly one $x_t$ to every branch of Construction~\ref{const:F and R}, and therefore contributes exactly 1 (as a summand) to $|\overline{F}|$ for every $F \in \F(\al,\be)$.
            Moreover, since the choice of the $x_t \in N$ is always independent of the other choices in Step~\ref{choice step}, we have that $|N|$ divides $|\F(\al,\be)|$.
            In this sense, we will say that ``$N$ contributes $|N|$ as a factor of $| \widetilde{\F}(\al,\be) |$.''
        \end{itemize}

    \begin{table}[t]
        \centering
        \begin{tabular}{|c|c|c|c|}
        \hline
        $C \in \Conn(D)$ 
            & 
        \begin{tabular}{c} Contribution \\
        as element(s) of \\
        $\NN(\al,\be)$ 
        \end{tabular}
            &
        \begin{tabular}{c} Contribution \\
        as a summand of \\ $|\overline{F}|$ \end{tabular} 
            & 
        \begin{tabular}{c}
        Contribution \\
        as a factor of \\
        $\left| \widetilde{\F}(\al,\be) \right|$
        \end{tabular}
            \\[3ex] \hline
        \begin{tikzpicture}
        [scale=.2,baseline=(current bounding box.center)]
            \draw (0,0) node [dot] {} -- ++(0,-1) node [dot] {} -- ++(0,-1) node [dot] {} -- ++(0,-1) node [dot] {} ;
            \node [left] at (0,0) {$(a,j) \rightarrow$} ;
            \node [left] at (0,-3) {$(b,j) \rightarrow$} ;
            \node at (0,1) {};
            \node at (0,-4) {};
        \end{tikzpicture}
            &
        $[a,b]$ 
            & 
        1 
            &
        $|C| = b-a+1$
        \\ \hline
        \begin{tikzpicture}
        [scale=.2,baseline=(current bounding box.center)]
            \draw (0,7) node [dot] {} -- ++(0,-1) node [dot] {} -- ++(0,-1) node [dot] {} -- ++(0,-1) node [dot] {} -- ++(0,-1) node [dot] {} -- ++(1,0) node [dot] {} -- ++(1,0) node [dot] {} -- ++(1,0) node [dot] {} -- ++(1,0) node [dot] {} -- ++(1,0) node [dot] {} -- ++(0,-1) node [dot] {} -- ++(0,-1) node [dot] {} --++(0,-1) node [dot] {} --++(0,-1) node [dot] {} ;
            \node [left] at (0,3) 
            {$(a,j) \rightarrow$} ;
            \node at (0,8) {} ;
            \node at (0,-2) {} ;
            \draw [decoration={brace},decorate] (4,2.5) -- node[below=2pt] {$\scriptstyle{\geq 1}$} (1,2.5);
            \draw [decoration={brace},decorate] (.5,7) -- node[right=2pt] {$\scriptstyle{\geq 0}$} (.5,4);
            \draw [decoration={brace},decorate] (5.5,2) -- node[right=2pt] {$\scriptstyle{\geq 0}$} (5.5,-1);
        \end{tikzpicture} 
            &
        $\{a\}$ 
            & 
        1 
            &
        1
        \\ \hline
        \begin{tikzpicture}
        [scale=.2,baseline=(current bounding box.center)]
            \draw (0,6) node [dot] {} -- ++(0,-1) node [dot] {} -- ++(0,-1) node [dot] {} -- ++(0,-1) node [dot] {} -- ++(1,0) node [dot] {} -- ++(0,-1) node [dot] {} -- ++(0,-1) node [dot] {} --++(0,-1) node [dot] {} ;
            \node [left] at (0,6) {$(a,j) \rightarrow$} ;
            \node [left] at (0,3) {$(b,j) \rightarrow$} ;
            \node [left] at (1,0) {$(c,j+1) \rightarrow$} ;
            \node at (0,7) {};
            \node at (0,-1) {};
        \end{tikzpicture} 
            &
        $[a,b]$ and $[b,c]$ 
            & 
        $
        \begin{cases}
           1 & \text{if } x_t = b, \\
           2 & \text{otherwise}
        \end{cases}
        $
            &
        1
        \\ \hline
        \begin{tabular}{c}
        Any $C$ containing\\
        the following ``Z'' pattern:\\
        \begin{tikzpicture}
        [scale=.2,baseline=(current bounding box.center)]
            \draw (-1,3) node [dot] {} -- ++(1,0) node [dot] {} -- ++(0,-1) node [dot] {} -- ++(0,-1) node [dot] {} -- ++(0,-1) node [dot] {} -- ++(0,-1) node [dot] {} -- ++(0,-1) node [dot] {} -- ++(0,-1) node [dot] {} -- ++(1,0) node [dot] {};
            \node [left] at (-1,3) {$(a,j) \rightarrow$} ;
            \node [right] at (1,-3) {$\leftarrow (b,j+2)$} ;
            \draw [decoration={brace},decorate] (.5,2) -- node[right=2pt] {$\scriptstyle{\geq 0}$} (.5,-2);
        \end{tikzpicture} 
        \end{tabular}
            &
        \begin{tabular}{c}
            at least \\
            $\{a\}$ and $\{b\}$
        \end{tabular}
            & 
        $\geq 2$
            &
        $\widetilde{\F}(\al,\be) = \emptyset$
        \\
        \hline
    \end{tabular}
        \caption{Assuming that $\Pic(\al,\be) = D \in \D_{pq}$, we list all possible types of connected components $C \in \Conn(D)$.
        The second column gives the minimal nonface(s) $N \in \NN(\al,\be)$ arising from each~$C$.
        The third column counts the number of $x_t$'s which are chosen from these $N$'s in Step~\ref{choice step} of Construction~\ref{const:F and R};
        each $x_t$ contributes 1 to $|\overline{F}|$.
        The fourth column counts how many choices of $x_t$'s are possible from these $N$'s when constructing some $F \in \widetilde{\F}(\al,\be)$.}
        \label{table:Conn(D)}
    \end{table}

    To begin, we assume that $\Pic(\al,\be) \in \D_{pq}$.
    (At the end, we will use the following analysis to handle the case $\Pic(\al,\be) \notin \D_{pq}$.)
    Hence, suppose $\Pic(\al,\be) = D \in \D_{pq}$.
    Since a Delannoy path with no $\searrow$'s has cardinality $p+q-1$, and since every additional $\searrow$ replaces two points by one, we have
    \begin{align}
        |D| &= p + q - 1 - \dd(D) \nonumber \\
        &= p + q - \left| \Conn(D) \right|, \label{D = p+q-Conn}
    \end{align}
    where the second equality follows from the fact 
    \begin{equation}
        \label{Conn = diag + 1}
        \left| \Conn(D) \right| = \dd(D) + 1
    \end{equation}
    observed in Definition~\ref{def:Delannoy}(\ref{Conn(D)}).
    Therefore the set $\widetilde{\F}(\al,\be)$ in Lemma~\ref{lemma:vol as nice sum} can be redefined as
    \begin{equation}
        \label{F tilde new}
        \widetilde{\F}(\al,\be) = \Big\{ F \in \F(\al,\be) : |\overline{F}| = \left| \Conn(D) \right|, \text{ where } D = \Pic(\al,\be) \Big\}.
    \end{equation}
    Hence in order to describe the elements $F \in \widetilde{\F}(\al,\be)$, it suffices to consider the contribution to $|\overline{F}|$ coming from each connected component in $D = \Pic(\al,\be)$, which we do in Table~\ref{table:Conn(D)}.
    Observing this table, note that any $C \in \Conn(D)$ contributes at least 1 to $|\overline{F}|$ for every $F \in \F(\al,\be)$.
    Therefore if any $C \in \Conn(D)$ contributes more than~1, it is impossible to satisfy $|\overline{F}| = \left| \Conn(D) \right|$, as is required by the condition on $\widetilde{\F}(\al,\be)$ on~\eqref{F tilde new}.
    Therefore if $\widetilde{\F}(\al,\be) \neq \emptyset$, then $\Pic(\al,\be)$ must not contain the pattern in the last row of Table~\ref{table:Conn(D)}, namely, the ``Z'' pattern in Definition~\ref{def:Delannoy}(\ref{DZ definition}).
    Conversely, if $\Pic(\al,\be)$ is Z-avoiding, then by the first three rows of Table~\ref{table:Conn(D)}, all of its connected components contribute exactly 1 to at least some $|\overline{F}|$, and thus $\widetilde{\F}(\al,\be) \neq \emptyset$ by~\eqref{F tilde new}.
    Therefore, to summarize this paragraph,
    \begin{equation}
        \label{assuming D, must be DZ}
        \text{if $\Pic(\al,\be) \in \D_{pq}$, then $\widetilde{\F}(\al,\be) \neq \emptyset \Longleftrightarrow \Pic(\al,\be) \in \DZ_{pq}$}.
    \end{equation}

    To complete the proof of part~(\ref{only DZ}), we must show that $\Pic(\al,\be) \notin \D_{pq}$ implies $\widetilde{\F}(\al,\be) = \emptyset$.
    Recall from its original definition in Lemma~\ref{lemma:vol as nice sum} that $\widetilde{\F}(\al,\be)$ consists of those facets $F \in \F(\al,\be)$ such that 
    \begin{equation}
        \label{F tilde original}
        |\overline{F}| = p + q - \left| \Pic(\al,\be) \right|.
    \end{equation}
    In particular, as we have just seen,~\eqref{F tilde original} is satisfied by each $F \in \widetilde{\F}(\al,\be)$ if $\Pic(\al,\be) \in \DZ_{pq}$.
    By contrast, since $\Pic(\al,\be)$ is a skew Young diagram with no empty rows or columns (see Remark~\ref{rem:osculating}), if it is not a Delannoy path then it contains a $2 \times 2$ square.
    We consider the effect of on~\eqref{F tilde original} obtained by augmenting the $Z$-avoiding $C$'s in Table~\ref{table:Conn(D)} by a single point, so as to contain a $2 \times 2$ square (which can be done only with the second or third rows).
    In either case, adding an extra point to complete a $2 \times 2$ square necessarily increases $\left| \Pic(\al,\be) \right|$ by 1;
    but crucially, it does \emph{not} decrease the contribution to $|\overline{F}|$.
    Hence the equation~\eqref{F tilde original} no longer holds for any $F \in \F(\al, \be)$, and therefore $\widetilde{\F}(\al, \be) = \emptyset$.
    To summarize this paragraph,
    \begin{equation}
        \label{if not D then F tilde empty}
        \text{if $\Pic(\al,\be) \notin \D_{pq}$, then $\widetilde{\F}(\al,\be) = \emptyset$}.
    \end{equation}
    Part~\eqref{only DZ} of the lemma now follows from~\eqref{assuming D, must be DZ} and~\eqref{if not D then F tilde empty}.

    Part~(\ref{F tilde part 2}\ref{size Fbar = diag + 1}) of the lemma follows immediately from~\eqref{Conn = diag + 1} and~\eqref{F tilde new}.
    Part~(\ref{F tilde part 2}\ref{size F tilde = vert}) follows from the rightmost column in Table~\ref{table:Conn(D)}, since the top row is a vertical component, and since the next two rows contribute factors of~1.
\end{proof}

\begin{proof}[Proof of Theorem~\ref{thm:volume in body}]

    Starting from the volume formula given in Lemma~\ref{lemma:vol as nice sum}, we have
    \begin{align*}
        \Vol(\M_{pq}) &= \frac{1}{p^q q^p} \sum_{(\al,\be) \in \O_{pq}} \Bigg(
        \sum_{F \in \widetilde{\F}(\al,\be)}
        \frac{q^{|\overline{F}|}}{\al!\al'!\be!\be'!}
        \Bigg) & \text{by Lemma~\ref{lemma:vol as nice sum}}\\
        &= \frac{1}{p^q q^p} \sum_{\substack{(\al,\be) \in \O_{pq}: \\ \Pic(\al,\be) \in \DZ_{pq}}} \Bigg(
        \sum_{F \in \widetilde{\F}(\al,\be)}
        \frac{q^{|\overline{F}|}}{\al!\al'!\be!\be'!}
        \Bigg) & \text{by Lemma~\ref{lemma:F tilde}(\ref{only DZ})} \\
        &= \frac{1}{p^q q^p} \sum_{\substack{(\al,\be) \in \O_{pq}: \\ \Pic(\al,\be) \in \DZ_{pq}}} \Bigg(
        \sum_{F \in \widetilde{\F}(\al,\be)}
        \frac{q^{|\overline{F}|}}{\ff(\Pic(\al,\be))}
        \Bigg) & \text{by Definition~\ref{def:Delannoy}(\ref{ff(D)})} \\
        &= \frac{1}{p^q q^p} \sum_{\substack{(\al,\be) \in \O_{pq}: \\ \Pic(\al,\be) \in \DZ_{pq}}} \Bigg(
        \sum_{F \in \widetilde{\F}(\al,\be)}
        \frac{q^{\dd(\Pic(\al,\be))+1}}{\ff(\Pic(\al,\be))}
        \Bigg) & \text{by Lemma~\ref{lemma:F tilde}(\ref{F tilde part 2}\ref{size Fbar = diag + 1})} \\
        &= \frac{1}{p^q q^p} \sum_{\substack{(\al,\be) \in \O_{pq}: \\ \Pic(\al,\be) \in \DZ_{pq}}} \Bigg(
        \vv(\Pic(\al,\be)) \cdot     \frac{q^{\dd(\Pic(\al,\be))+1}}{\ff(\Pic(\al,\be))}
        \Bigg) & \text{by Lemma~\ref{lemma:F tilde}(\ref{F tilde part 2}\ref{size F tilde = vert})} \\
        &= \frac{1}{p^q q^p} \sum_{D \in \DZ_{pq}} q^{\dd(D)+1} \cdot \frac{\vv(D)}{\ff(D)} & \text{putting $D = \Pic(\al,\be)$} \\
        &= \frac{1}{p^q q^{p-1}} \sum_{D \in \DZ_{pq}} q^{\dd(D)} \: \frac{\vv(D)}{\ff(D)},
    \end{align*}
    where the substitution $D = \Pic(\al,\be)$ in the penultimate line is justified by the fact that the ``$\Pic$'' map~\eqref{Pic definition} is injective, and its image contains all of $\D_{pq}$ (and therefore $\DZ_{pq}$).
\end{proof}

As a coda to the paper, having introduced the notion of Z-avoiding Delannoy paths, it was natural to ask for the precise number of such paths for arbitrary $p$ and $q$.
The following proposition answers this question via a generating function.

\begin{proposition}
    \label{prop:count Dpq}
    Let $\DZ_{pq}$ be the set defined in Definition~\ref{def:Delannoy}(\ref{DZ definition}).
    We have
    \[
        \sum_{p,q=1}^\infty \left| \DZ_{pq} \right| x^p y^q = \frac{xy(1 -x + xy)}{1- 2x - y + x^2 + xy -x^2y^2}.
    \]
\end{proposition}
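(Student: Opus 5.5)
The plan is to treat Z-avoiding Delannoy paths as words in a regular language and count them with a finite automaton (transfer-matrix method). Encode $D \in \D_{pq}$ as a word in the letters $\downarrow, \rightarrow, \searrow$, with
\[
\#\downarrow + \#\searrow = p-1, \qquad \#\rightarrow + \#\searrow = q-1 .
\]
So if we give $\downarrow$ weight $x$, $\rightarrow$ weight $y$, $\searrow$ weight $xy$, and multiply by the weight $xy$ of the starting point $(1,1)$, a word contributes exactly $x^p y^q$. This gives a bijection between all words and all pairs $(p,q)$ with a path. Hence $\sum_{p,q}|\DZ_{pq}|x^py^q = xy\cdot W(x,y)$, where $W$ is the weighted generating function of words avoiding every factor $\rightarrow\downarrow^k\rightarrow$ with $k\ge1$ (Definition~\ref{def:Delannoy}(\ref{DZ definition})).

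Next I would build a deterministic automaton recognizing these words, with three states.
\begin{itemize}
\item[] State $0$ means ``safe'': the start, or the last letter is a $\searrow$, or we are in a run of $\downarrow$'s not preceded by a $\rightarrow$.
\item[] State $A$ means the last letter is a $\rightarrow$.
\item[] State $B$ means we have read $\rightarrow\downarrow^k$ with $k\ge1$.
\end{itemize}
The transitions are as follows.
\begin{itemize}
\item[] From $0$: $\downarrow$ and $\searrow$ go to $0$, and $\rightarrow$ goes to $A$.
\item[] From $A$: $\rightarrow$ goes to $A$, $\downarrow$ goes to $B$, and $\searrow$ goes to $0$.
\item[] From $B$: $\downarrow$ goes to $B$, $\searrow$ goes to $0$, and $\rightarrow$ is forbidden, since it would complete a Z.
\end{itemize}
I would check that a word avoids the Z pattern if and only if it never attempts the forbidden transition. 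The point is that a $\searrow$ breaks any potential Z, because a Z requires the $\rightarrow$'s to be adjacent to a pure $\downarrow$-run.

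Let $G_0, G_A, G_B$ be the generating functions of accepted words ending in each state. They satisfy the linear system
\[
G_0 = 1 + (x+xy)G_0 + xy\,G_A + xy\,G_B,\qquad G_A = y\,G_0 + y\,G_A,\qquad G_B = x\,G_A + x\,G_B .
\]
Solving gives
\[
G_A = \frac{y\,G_0}{1-y},\qquad G_B = \frac{xy\,G_0}{(1-x)(1-y)},
\]
and then
\[
W = G_0 + G_A + G_B = G_0\,\frac{1-x+xy}{(1-x)(1-y)} ,
\]
which already produces the claimed numerator factor $1-x+xy$. Substituting into the first equation,
\[
G_0^{-1}(1-x)(1-y) = (1-x-xy)(1-x)(1-y) - xy^2(1-x) - x^2y^2 ,
\]
and the claim reduces to showing that this cubic expression equals $(1-x)(1-y)\,(1-2x-y+x^2+xy-x^2y^2)/\bigl((1-x)(1-y)\bigr)$ after cancellation. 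That is, $W = (1-x+xy)/(1-2x-y+x^2+xy-x^2y^2)$.

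The main obstacle is twofold. The first part is this final algebraic simplification, which I would do by direct expansion, possibly redoing the system with states merged differently if the factor does not cancel cleanly. The second part is making sure the automaton encodes exactly Z-avoidance and the boundary cases. These include $p=1$ or $q=1$, where the path is a pure run, and Z patterns that begin immediately after a $\searrow$. As a check, I would compare the first few coefficients against the $p=q=3$ count of $10$ paths from the introduction.
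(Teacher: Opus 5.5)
Your proposal is correct, and it takes a genuinely different route from the paper.

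The paper argues by complement. It starts from the full Delannoy series $D = xy/(1-x-y-xy)$ and subtracts the paths containing a Z, cutting each one at its leftmost Z: a Z-avoiding prefix ending in $\rightarrow$, a nonempty run of $\downarrow$'s, a $\rightarrow$, then an arbitrary suffix. It closes the system with auxiliary series $E$ and $C$, which count Z-avoiding paths ending in $\rightarrow$ and in $\rightarrow\downarrow^k$ ($k\ge 1$), respectively.

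Your automaton does the same bookkeeping directly. In fact $E = xy\,G_A$ and $C = xy\,G_B$. Your relation $G_B = xG_A/(1-x)$ is the paper's $C = Ex/(1-x)$, and $G_A = y(G_0+G_A)$ is the paper's $E = (D^{\mathrm{Z}\!\!\!\!\backslash} - C)y$. The only real difference is that your equation for $G_0$ replaces the paper's identity $D^{\mathrm{Z}\!\!\!\!\backslash} = D - DE/(1-x)$.

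What your route buys:
\begin{itemize}
\item[] It is self-contained: it never uses the total Delannoy series or the uniqueness of the leftmost-Z factorization.
\item[] Correctness reduces to the easy check that the three-state automaton accepts exactly the words avoiding the factor $\rightarrow\downarrow^k\rightarrow$.
\item[] It exhibits the numerator factor $1-x+xy$ transparently, and it extends mechanically to other forbidden factors.
\end{itemize}
The paper's route avoids building an automaton, but it relies on the first-occurrence decomposition.

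The step you flagged as the main obstacle goes through with no re-merging of states. Expanding,
\[
(1-x-xy)(1-x)(1-y) = 1-2x-y+x^2+xy+xy^2-x^2y^2,
\]
and subtracting $xy^2(1-x)+x^2y^2 = xy^2$ leaves exactly $1-2x-y+x^2+xy-x^2y^2$. The expression is not really cubic, since the cubic terms cancel. Hence $W = (1-x+xy)/(1-2x-y+x^2+xy-x^2y^2)$, and multiplying by $xy$ gives the statement. In a write-up, simply display this expansion rather than phrasing it as a reduction still to be checked.
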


\begin{proof}
    Denote the desired generating function by 
    \[
        D^{\mathrm{Z} \!\!\!\! \backslash} \coloneqq \sum_{p,q=1}^\infty \left| \DZ_{pq} \right| x^p y^p.
    \]
    Recalling that $\D_{pq}$ denotes the set of \emph{all} Delannoy paths from $(1,1)$ to $(p,q)$, it is clear that
    \begin{equation}
        \label{D equation}
        D \coloneqq \sum_{p,q=1}^\infty \left| \D_{pq} \right| x^p y^q = \frac{xy}{1-(x+y+xy)},
    \end{equation}
    since each path step $\downarrow$, $\rightarrow$, or $\searrow$ contributes $x$, $y$, or $xy$, respectively, to the corresponding term in~$D$, and we are considering all possible words in these three steps. 
    (The $xy$ in the numerator of $D$ is required because our paths start at $(1,1)$ rather than at $(0,0)$.)
    Now let $\mathcal{Z}_{pq} \subset \D_{pq}$ denote the subset of paths that contain a forbidden ``Z'' pattern
    \begin{equation}
        \label{Z pattern}
        \rightarrow \underbrace{\downarrow \cdots \downarrow}_{\mathclap{\substack{\text{nonempty run} \\ \text{of $\downarrow$'s}}}} \rightarrow
    \end{equation}
    from Definition~\ref{def:Delannoy}(\ref{DZ definition}), and set $Z \coloneqq \sum_{p,q=1}^\infty \left| \mathcal{Z}_{pq} \right| x^p y^q$.
    We then have
    \begin{equation}
        \label{DZ = D - Z}
        D^{\mathrm{Z} \!\!\!\! \backslash} = D - Z.
    \end{equation}
    Each path in $\mathcal{Z}_{pq}$ must contain a leftmost ``Z'' pattern~\eqref{Z pattern}, and thus we can decompose it uniquely into the following four consecutive subpaths (in order):
    \begin{equation}
        \label{four segments}
        \begin{array}{l}
        \text{an initial Z-avoiding segment path ending with $\rightarrow$}; \\
        \text{a nonempty run of $\downarrow$'s}; \\
        \text{a $\rightarrow$}; \\
        \text{a final arbitrary path}.
        \end{array}
    \end{equation}
    To translate~\eqref{four segments} into generating functions, let $\mathcal{E}_{pq} \subset \DZ_{pq}$ denote the subset of paths ending in~$\rightarrow$, and set $E \coloneqq \sum_{p,q=1}^\infty \left| \mathcal{E}_{pq} \right| x^p y^q$.
    Then~\eqref{four segments} is equivalent to the fourfold factorization
    \begin{equation}
        \label{Z equation}
        Z = E \cdot \frac{x}{1-x} \cdot y \cdot \frac{D}{xy} = \frac{DE}{1-x},
    \end{equation}
    where we divided the $D$ by $xy$ because the final arbitrary path should not include the shift by $(1,1)$ from~\eqref{D equation}.
    Now, each path in $\mathcal{E}_{pq}$ is obtained by appending a $\rightarrow$ to the end of a Z-avoiding path which does not end in the pattern $\rightarrow \downarrow \cdots \downarrow$ (where the run of $\downarrow$'s is nonempty).
    Thus, let $\mathcal{C}_{pq} \subset \DZ_{pq}$ denote the subset of paths that \emph{do} end in the pattern $\rightarrow \downarrow \cdots \downarrow$, and put $C \coloneqq \sum_{p,q=1}^\infty \left| \mathcal{C}_{pq} \right| x^p y^q$.
    We have then said that
    \begin{equation}
        \label{E = (DZ-C)y}
        E = (D^{\mathrm{Z} \!\!\!\! \backslash}-C)y.
    \end{equation}
    Moreover, each path in $\mathcal{C}_{pq}$ is obtained by appending a nonempty run of $\downarrow$'s to the end of some Z-avoiding path ending with $\rightarrow$, that is, to the end of some path of the type counted by $E$.
    Therefore,
    \begin{equation}
        \label{C equation}
        C = E \cdot \frac{x}{1-x},
    \end{equation}
    and by substituting~\eqref{C equation} back into~\eqref{E = (DZ-C)y}, we can express $E$ in terms of $D^{\mathrm{Z} \!\!\!\! \backslash}$ as follows:
    \begin{align}
        E &= \left(D^{\mathrm{Z} \!\!\!\! \backslash} - E \cdot \frac{x}{1-x} \right)y \nonumber \\
        E + E \frac{xy}{1-x} & = D^{\mathrm{Z} \!\!\!\! \backslash}y \nonumber \\
        E \left(\frac{1-x+xy}{1-x} \right) &= D^{\mathrm{Z} \!\!\!\! \backslash}y \nonumber \\
        E &= D^{\mathrm{Z} \!\!\!\! \backslash} \frac{y(1-x)}{1 - x + xy}. \label{E in terms of DZ}
    \end{align}
    Putting this all together, we have
    \begin{align*}
        D^{\mathrm{Z} \!\!\!\! \backslash} & = D - Z & \text{by~\eqref{DZ = D - Z}} \\
        & = D - \frac{DE}{1-x} & \text{by~\eqref{Z equation}} \\
        & = D - DD^{\mathrm{Z} \!\!\!\! \backslash} \frac{y(1-x)}{(1-x)(1-x+xy)} & \text{by~\eqref{E in terms of DZ}} \\
        & = D - DD^{\mathrm{Z} \!\!\!\! \backslash} \frac{y}{1 - x + xy},
    \end{align*}
    and then solving for $D^{\mathrm{Z} \!\!\!\! \backslash}$ we obtain
    \begin{align*}
        D^{\mathrm{Z} \!\!\!\! \backslash} + DD^{\mathrm{Z} \!\!\!\! \backslash} \frac{y}{1-x+xy} &= D \\
        D^{\mathrm{Z} \!\!\!\! \backslash} \left(\frac{1 - x + xy + Dy}{1-x+xy} \right) & = D \\
        D^{\mathrm{Z} \!\!\!\! \backslash} &= \frac{D(1-x+xy)}{1 - x + xy + Dy},
    \end{align*}
    and finally substituting $D = \frac{xy}{1 -x -y -xy}$ from~\eqref{D equation}, we conclude that
    \[
        D^{\mathrm{Z} \!\!\!\! \backslash} = \frac{\frac{xy(1 - x + xy)}{1-x-y-xy}}{1 - x + xy + \frac{xy(y)}{1-x-y-xy}} = \frac{xy(1 - x + xy)}{(1-x+xy)(1-x-y-xy) + xy^2} = \frac{xy(1 - x + xy)}{1 - 2x - y + x^2 + xy - x^2 y^2},
    \]
    which is the rational function given in Proposition~\ref{prop:count Dpq}.
\end{proof}

Proposition~\ref{prop:count Dpq} yields the following concrete values of $\bigl| \DZ_{pq} \bigr|$ for $p,q \leq 10$:
\begin{equation}
    \label{table Dpq}
    \begin{array}{|>{\columncolor{lightgray}}c |c|c|c|c|c|c|c|c|c|c|}
    \hline
    \rowcolor{lightgray} p \backslash q & 1 & 2 & 3 & 4 & 5 & 6 & 7 & 8 & 9 & 10 \\ \hline
    1 & 1 & 1 & 1 & 1 & 1 & 1 & 1 & 1 & 1 & 1 \\ \hline
    2 & 1 & 3 & 4 & 5 & 6 & 7 & 8 & 9 & 10 & 11 \\ \hline
    3 & 1 & 5 & 10 & 16 & 23 & 31 & 40 & 50 & 61 & 73 \\ \hline
    4 & 1 & 7 & 19 & 39 & 67 & 104 & 151 & 209 & 279 & 362 \\ \hline
    5 & 1 & 9 & 31 & 79 & 161 & 287 & 468 & 716 & 1044 & 1466 \\ \hline
    6 & 1 & 11 & 46 & 141 & 336 & 684 & 1249 & 2108 & 3352 & 5087 \\ \hline
    7 & 1 & 13 & 64 & 230 & 631 & 1455 & 2962 & 5500 & 9520 & 15592 \\ \hline
    8 & 1 & 15 & 85 & 351 & 1093 & 2829 & 6385 & 12999 & 24436 & 43121 \\ \hline
    9 & 1 & 17 & 109 & 509 & 1777 & 5117 & 12727 & 28295 & 57610 & 109324 \\ \hline
    10 & 1 & 19 & 136 & 709 & 2746 & 8725 & 23770 & 57463 & 126337 & 257253 \\ \hline
    \end{array}
\end{equation}
Unlike the classical Delannoy numbers, the numbers $\bigl| \DZ_{pq} \bigr|$ in~\eqref{table Dpq} are not symmetric in $p$ and $q$, since the forbidden ``Z'' pattern~\eqref{Z pattern} is not symmetric in $\rightarrow$ and $\downarrow$.

\appendix

\section{The Ehrhart series \texorpdfstring{of $\M_{3,3}$}{for p=q=3}}
\label{appendix:3 by 3}

To illustrate Theorem~\ref{thm:Ehrhart in intro} (equivalently, Corollary~\ref{cor:Ehrhart series}), we write down every term in the Ehrhart series of $\M_{pq}$ in the case $p=q=3$.
The data are organized in the table below.
The 29 rows correspond to the 29 elements of $\O_{3,3}$;
in particular, the first column depicts $\Fr(\al,\be)$ for each $(\al,\be) \in \O_{3,3}$.
In the third column, for each facet $F \in \F(\al,\be)$, we use boldface to indicate the elements (if any) of $\res(F) \subseteq F$.
We use ditto marks (") to indicate that an entry is the same as the one directly above it.

\newcolumntype{M}{>{$}c<{$}} 

\begin{longtable}{M | M | M | M}
        (\al,\be) \in \O_{pq} & \vphantom{\Bigg|} \frac{z^{\d(\al, \be)} }{\prod_{(i,j) \in \al} (1-z^{ij}) \prod_{(i,j) \in \be} (1-z^{ij})} & F \in \F(\al,\be) & \frac{z^{q \left| \res(F) \right|} }{ (1-z^q)^{|F|}}  \\ \hline
        \exIntro{} & 
        1 & 
            \begin{array}{c}
                \{1,2\} \\
                \{1,\mathbf{3}\} \\
                \{\mathbf{2}, \mathbf{3}\}
            \end{array} &
            \begin{array}{c}
                1/(1-z^3)^2 \\
                z^3/(1-z^3)^2 \\
                z^6/(1-z^3)^2
            \end{array}
        \\ \hline
        \exIntro{3/3} & 
        \vphantom{\bigg|} 
        \frac{z}{1-z}
        &
        \begin{array}{c}
                \{1,2\} \\
                \{1,\mathbf{3}\}
            \end{array}
        &
            \begin{array}{c}
                1/(1-z^3)^2 \\
                z^3/(1-z^3)^2
            \end{array} 
        \\ \hline
        \exIntro{3/3,2/3} & 
        \vphantom{\bigg|}
        \frac{z^2}{(1-z)(1-z^2)}
        & \text{"}
        & \text{"}
        \\ \hline
        \exIntro{1/1} & 
        \frac{z}{1-z} &
            \begin{array}{c}
                \{1,3\} \\
                \{\mathbf{2},3\}
            \end{array}
        & \text{"}
        \\ \hline
        \exIntro{1/1,2/1} & 
        \vphantom{\bigg|}
        \frac{z^2}{(1-z)(1-z^2)}  
        & \text{"}
        & \text{"}
        \\ \hline
        \exIntro{1/1,3/3} & 
        \frac{z^2}{(1-z)^2}
        &
        \begin{array}{c}
                \{1,3\} \\
                \{\mathbf{2}\}
            \end{array}
        &
            \begin{array}{c}
                1/(1-z^3)^2 \\
                z^3/(1-z^3)
            \end{array} 
        \\ \hline
        \exIntro{1/1,2/1,3/3} & 
        \vphantom{\bigg|}
        \frac{z^3}{(1-z)^2(1-z^2)}
        & \text{"}
        & \text{"}
        \\ \hline
        \exIntro{3/3,2/3,1/1} & 
        \vphantom{\bigg|}
        \frac{z^3}{(1-z)^2(1-z^2)}
        & \text{"}
        & \text{"} 
        \\ \hline
        \exIntro{1/1,3/3,3/2} & 
        \frac{z^3}{(1-z)^2(1-z^2)}
        &
        \begin{array}{c}
                \{1\} \\
                \{\mathbf{2}\}
            \end{array}
        &
            \begin{array}{c}
                1/(1-z^3) \\
                z^3/(1-z^3)
            \end{array} 
        \\ \hline
        \exIntro{1/1,2/1,3/3,3/2} & 
        \vphantom{\bigg|}
        \frac{z^4}{(1-z)^2(1-z^2)^2}
        & \text{"}
        & \text{"}
        \\ \hline
         \exIntro{1/1,3/3,2/3,3/2} & 
        \vphantom{\bigg|}
        \frac{z^5}{(1-z)^2(1-z^2)^2}
        & \text{"}
        & \text{"}
        \\ \hline
        \exIntro{1/1,3/3,3/2,2/3,2/2} & 
        \vphantom{\bigg|}
        \frac{z^5}{(1-z)^2(1-z^2)^2(1-z^4)}
        & \text{"}
        & \text{"}
        \\ \hline
        \exIntro{1/1,1/2,3/3} & 
        \frac{z^3}{(1-z)^2(1-z^2)}
        &
        \begin{array}{c}
                \{2\} \\
                \{\mathbf{3}\}
            \end{array}
        & \text{"}
        \\ \hline
        \exIntro{1/1,1/2,2/1,3/3} & 
        \vphantom{\bigg|}
        \frac{z^5}{(1-z)^2(1-z^2)^2}
        & \text{"}
        & \text{"}
        \\ \hline
        \exIntro{1/1,1/2,2/1,2/2,3/3} & 
        \vphantom{\bigg|}
        \frac{z^5}{(1-z)^2(1-z^2)^2(1-z^4)}
        & \text{"}
        & \text{"}
        \\ \hline
        \exIntro{1/1,1/2,2/3,3/3} & 
        \vphantom{\bigg|}
        \frac{z^4}{(1-z)^2(1-z^2)^2}
        & \text{"}
        & \text{"} 
        \\ \hline
        \exIntro{3/3,3/2} & 
        \vphantom{\bigg|}
        \frac{z^2}{(1-z)(1-z^2)}
        &
        \begin{array}{c}
                \{1,2\}
            \end{array}
        &
            \begin{array}{c}
                1/(1-z^3)^2
            \end{array} 
        \\ \hline
        \exIntro{3/3,2/3,3/2} & 
        \vphantom{\bigg|}
        \frac{z^4}{(1-z)(1-z^2)^2}
        & \text{"}
        & \text{"}
        \\ \hline
        \exIntro{3/3,3/2,2/3,2/2} & 
        \vphantom{\bigg|}
        \frac{z^4}{(1-z)(1-z^2)^2(1-z^4)}
        & \text{"}
        & \text{"} 
        \\ \hline
        \exIntro{1/1,2/1,2/3,3/3} & 
        \vphantom{\bigg|}
        \frac{z^4}{(1-z)^2(1-z^2)^2}
        &
        \begin{array}{c}
                \{1,3\}
            \end{array}
        & \text{"}
        \\ \hline
        \exIntro{1/1,1/2} & 
        \vphantom{\bigg|} 
        \frac{z^2}{(1-z)(1-z^2)} & \begin{array}{c}
                \{2,3\}
            \end{array}
        & \text{"}
        \\ \hline
        \exIntro{1/1,1/2,2/1} & 
        \vphantom{\bigg|} 
        \frac{z^4}{(1-z)(1-z^2)^2} 
        & \text{"}
        & \text{"}
        \\ \hline
        \exIntro{1/1,1/2,2/1,2/2} & 
        \vphantom{\bigg|} 
        \frac{z^4}{(1-z)(1-z^2)^2(1-z^4)}
        & \text{"}
        & \text{"}
        \\ \hline
        \exIntro{1/1,2/1,3/3,3/2,2/3} & 
        \vphantom{\bigg|}
        \frac{z^6}{(1-z)^2(1-z^2)^3}
        &
        \begin{array}{c}
                \{1\}
            \end{array}
        &
            \begin{array}{c}
                1/(1-z^3)
            \end{array} 
        \\ \hline
        \exIntro{1/1,1/2,3/3,3/2} & 
        \vphantom{\bigg|}
        \frac{z^4}{(1-z)^2(1-z^2)^2}
        &
        \begin{array}{c}
                \{2\}
            \end{array}
        & \text{"}
        \\ \hline
        \exIntro{1/1,1/2,2/1,3/3,3/2} & 
        \vphantom{\bigg|}
        \frac{z^6}{(1-z)^2(1-z^2)^3}
        & \text{"}
        & \text{"}
        \\ \hline
        \exIntro{1/1,1/2,3/3,3/2,2/3} & 
        \vphantom{\bigg|}
        \frac{z^6}{(1-z)^2(1-z^2)^3}
        & \text{"}
        & \text{"}
        \\ \hline
        \exIntro{1/1,1/2,2/1,3/3,2/3} & 
        \vphantom{\bigg|}
        \frac{z^6}{(1-z)^2(1-z^2)^3}
        &
        \begin{array}{c}
                \{3\}
            \end{array}
        & \text{"}
        \\ \hline
        \exIntro{1/1,1/2,2/1,3/3,3/2,2/3} & 
        \vphantom{\bigg|}
        \frac{z^8}{(1-z)^2(1-z^2)^4}
        &
        \begin{array}{c}
                \emptyset
            \end{array}
        &
            \begin{array}{c}
                1
            \end{array} 
        \\ \hline
    \end{longtable}

Following Theorem~\ref{thm:Ehrhart in intro} (or Corollary~\ref{cor:Ehrhart series}), the Ehrhart series of $\M_{3,3}$ is obtained by summing over all the rows, where in each row, one takes the product of the second column with the sum of the rational functions in the fourth column.
Upon multiplying the result by the prefactor $1/(1-z^3)^3$, we obtain the power series expansion
\[
    \Ehr(\M_{3,3}; z) = 1 + 2 z + 7 z^2 + 18 z^3 + 41 z^4 + 86 z^5 + 176 z^6 + 325 z^7 + 
 587 z^8 + 1016 z^9 + 1686 z^{10} + \cdots,
\]
which we previewed in the introduction.

\bibliographystyle{amsplain}

\bibliography{references}

\end{document}